\tikzset{commutative diagrams/.cd}
\newtheorem{theorem}{Theorem}[subsection]
\newtheorem{lemma}[theorem]{Lemma}
\newtheorem{proposition}[theorem]{Proposition}
\theoremstyle{definition}
\newtheorem{definition}[theorem]{Definition}
\newtheorem{claim}[theorem]{Claim}
\newtheorem{example}[theorem]{Example}
\newtheorem{remark}[theorem]{Remark}
\newtheorem{construction}[theorem]{Construction}
\newtheorem{notation}[theorem]{Notation}
\newcommand{\E}{\mathcal{E}}
\newcommand{\D}{\mathcal{D}}
\newcommand{\Q}{\mathbf{Q}}
\newcommand{\Sc}{\op{Sch}}
\newcommand{\K}{\mathcal{K}}
\newcommand{\N}{\mathcal{N}}
\newcommand{\op}[1]{\operatorname{#1}}
\newcommand{\C}{\mathcal{C}}
\newcommand{\U}{\mathcal{U}}
\newcommand{\F}{\mathcal{F}}
\newcommand{\niI}{\underline n}
\newcommand{\ep}{\epsilon}
\newcommand{\dd}{\delta}
\newcommand{\Ca}{\mathcal{C}}
\numberwithin{subsection}{section}
\newcommand{\Set}{\op{Sets}}
\newcommand{\J}{\mathcal{J}}
\newcommand{\sset}{\op{Set}_{\Delta}}
\newcommand{\Cpt}{\op{Cpt}}
\newcommand{\Copt}{\op{\C pt}}
\renewcommand{\U}{\mathcal{U}}
\newcommand{\bx}{\square}
\newcommand{\Crt}{\op{Cart}}
\newcommand{\Cart}{\op{\C art}}
\newcommand{\Krt}{\op{\K art}}
\newcommand{\Kpt}{\op{\K pt}}
\title{ Six-Functor Formalisms II : The $\infty$-categorical compactification.}
\author{Chirantan Chowdhury}
\date{\today}
\begin{document}

\maketitle{}

\begin{abstract}
  This paper is part of a series of articles in which we reproduce the statements regarding the abstract six-functor formalism developed by Liu-Zheng. In this paper, we prove a theorem, which is an $\infty$-categorical version for defining the exceptional pushforward functor in an abstract-six functor formalism. The article describes specific combinatorial simplicial sets related to compactifications and pullback squares. This theorem plays a key role in constructing the abstract six-functor formalism, which will be discussed in the forthcoming article.  
\end{abstract}
\tableofcontents

\section{Introduction}
	
The abstract six-functor formalism plays is an $\infty$-categorical formulation of classical Grothendieck six-functor formalism encoding duality theories \footnote{In these articles, we only consider the abstract six-functor formalism using the language of $\infty$-categories due to Lurie. Abstract six-functor formalisms have been studied using the language of derivators, for example: \cite{hörmann2022derivator}}. In recent years, such formalism became a key tool in formalizing cohomology and duality theories in various contexts like arithmetic geometry (\cite{padic6functorlucasmann},\cite{Etalecohodiam}), motivic homotopy theory (\cite{khan2021generalized},\cite{Chowdhury}) many others. The foundational ideas of abstract six-functor formalism rely on unpublished works due to Liu-Zheng (\cite{Gluerestnerv} and \cite{liu2017enhanced}). This article is part of a series where we reprove a simplified version of statements proved by Liu-Zheng in constructing abstract six-functor formalism. In particular, this article deals with proving an $\infty$-categorical version of constructing exceptional pushforwards by gluing functors using combinatorial simplicial sets and the technical theorem proved in the previous article (\cite[Theorem 4.1.1]{chowdhury2023sixfunctorformalismsi}).\\

To motivate the abstract formalism of compactifications, let us consider the construction of exceptional functors in the setting of \'etale cohomology of schemes.
	Let $f: X \to Y$ be a separated morphism of finite type of quasi-compact and quasi-separated schemes and $\Lambda$ be a torsion ring. We have the extraordinary pushforward map on the level of triangulated categories \[ f_!: \D(X,\Lambda) \to \D(Y,\Lambda)  \]
	which when restricted to open immersions is the map $f_{\#}$ and to proper morphisms the map $f_*$. The construction of $f_!$ involves the general theory of gluing two psuedofunctors developed by Deligne (\cite[Section 3]{Delignecohomologyproper}). Let us briefly recall the construction setup. 
 \begin{definition}\label{compactification2catdef}
      For any morphism $f$ as above, we consider the $2$-category of compactifications $\op{Sch}^{\op{comp}} $ whose objects are schemes and morphisms are triangles
	\begin{equation}
		\begin{tikzcd}
			{} & \overline{Y} \arrow[dr,"p"] & {} \\
			X \arrow[ur,hookrightarrow,"j"] && Y
		\end{tikzcd}
	\end{equation}
	where $j$ is open and $p$ is proper.
 \end{definition}
 It is important to note that one can compose morphisms of such form due to Nagata's compactification theorem. Then one can define a pseudo-functor $F_c: \op{Sch}^{\op{comp}} \to \op{Cat}_1$ which sends a scheme $X$ to $D(X,\Lambda)$ and a triangle of the form above to the composition $p_* \circ j_{\#}$ (here $\op{Cat}_1$ denotes the $2$-category of categories). The theory of gluing in $2$-categories tells us that the functor $F_c$ can be extended to a functor $f_!$ from the category $\op{Sch}'$ consisting of schemes where morphisms are separated and finite type. In other words, the diagram 
	\begin{equation}
		\begin{tikzcd}
			\op{Sch}^{\op{comp}} \arrow[r,"F_c"] \arrow[d,"\op{pr}"] & \op{Cat}_1 \\
			\op{Sch}' \arrow[ur,"F_!"] & {}
		\end{tikzcd}
	\end{equation} 
	
	In the language of abstract $\infty$-categories, we reinterpret the following statement in the following fashion: Let $\Ca$ be an $\infty$-category and let $(\E_1,\E_2)$ be a pair of collection of edges in $\Ca$ satisfying some nice conditions (see \cref{compthm} for more details). Let us consider a new simplicial set $\dd^*_2 \Ca^{\op{cart}}_{\E_1,\E_2}$. The $n$-simplices of $\dd^*_2\Ca^{\op{cart}}_{\E_1,\E_2}$ are $n \times n$ grids of the form 
	
	\begin{equation}
		\begin{tikzcd}
			X_{00} \arrow[r] \arrow[d] & X_{01} \arrow[r] \arrow[d] & \cdots & X_{0n} \arrow[d] \\
			X_{10} \arrow[r]  & X_{11} \arrow[r] & \cdots & X_{1n} \\
			\vdots & \vdots & \vdots & \vdots \\
			X_{n1} \arrow[r] & X_{n2} & \cdots & X_{nn}
		\end{tikzcd}
	\end{equation}
	where vertical arrows are in $\E_1$, horizontal arrows are in $\E_2$ and each square is a pullback square. Also one has a natural morphism $p:\dd^*_2\Ca^{\op{cart}}_{\E_1,\E_2} \to \Ca$ induced by composition along the diagonal. Before stating the main theorem, let us recall the notion of admissible edges.
	\begin{definition}
		Let $\Ca$ be an $\infty$-category. Let $\E$ be a collection of morphisms in $\Ca$. Then $\E$ is said to be \textit{admissible} if 
		\begin{enumerate}
			\item $\E$ contains every identity morphism in $\Ca$.
			\item $\E$ is stable under pullbacks.
			\item For every pair of composable morphisms $p \in \E$ an $q$ a morphism in $\Ca$, then if $p \circ q \in E $ implies $ q \in E$. 
		\end{enumerate}
	\end{definition}
		\begin{theorem}\label{compthm}
		Let $\Ca$ be an $\infty$-category and $\E_1,\E_2$ be a collection of edges in $\Ca$ with the following conditions:
  \begin{enumerate}
      \item For every morphism in $f \in \Ca$, there exists a $2$-simplex in $\Ca$ of the form :
      \begin{equation}
          \begin{tikzcd}
              {} & y \arrow[dr,"p"] & {}\\
              x \arrow[ur,"q"] \arrow[rr,"f"] && z
              \end{tikzcd}
              \end{equation}
              where $p \in \E_1$ and $q \in \E_2$.
\item Every morphism $f \in \E_1\cap \E_2$ is $k$-truncated for $k \ge -2$.
 
\item The edges $\E_1$ and $\E_2$ are admissible.

  \end{enumerate}
  Then for any $\infty$-category $\D$, there exists a solution to the lifting problem:
\begin{equation}
\begin{tikzcd}
    \dd^*_2\Ca^{\op{cart}}_{\E_1,\E_2} \arrow[d,"p"] \arrow[r,"g"] &\D\\
    \Ca \arrow[ur,"g'",dotted] & {}.
    \end{tikzcd}
\end{equation}
  
	\end{theorem}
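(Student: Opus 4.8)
The plan is to concentrate all of the content into a single statement about the map $p$, namely that the composition-along-the-diagonal map $p \colon \dd^*_2\Ca^{\op{cart}}_{\E_1,\E_2} \to \Ca$ is a categorical equivalence (and, if possible, a trivial fibration in the Joyal model structure). Granting this, the lift is essentially formal: for any $\infty$-category $\D$ the restriction functor $p^\ast \colon \op{Fun}(\Ca,\D) \to \op{Fun}(\dd^*_2\Ca^{\op{cart}}_{\E_1,\E_2},\D)$ is then an equivalence of $\infty$-categories, hence essentially surjective, so the given $g$ lies in its essential image and unwinding this produces $g'$ with $g' \circ p \simeq g$. Concretely, a section $s \colon \Ca \to \dd^*_2\Ca^{\op{cart}}_{\E_1,\E_2}$ of $p$ lets one take $g' := g \circ s$, and the contracting homotopy $s \circ p \simeq \op{id}$ supplied by the equivalence shows $g' \circ p \simeq g$.

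I would not attack $p$ head-on, but rather factor it through a chain of intermediate grid-type simplicial sets — the simplicial set recording only a single factorization datum (a $2$-simplex $x \xrightarrow{q} y \xrightarrow{p} z$ with $q \in \E_2$, $p \in \E_1$), and the bisimplicial object of cartesian grids of which $\dd^*_2\Ca^{\op{cart}}_{\E_1,\E_2}$ is the diagonal, pullback-restricted piece — and then reduce, stage by stage, to the main technical theorem of the previous article (\cite[Theorem 4.1.1]{chowdhury2023sixfunctorformalismsi}). That theorem is precisely the combinatorial criterion one wants: it certifies that a comparison map between such cartesian-grid simplicial sets is a categorical equivalence once one checks a fibrancy/cartesianness condition together with a truncatedness hypothesis on the relevant fibers. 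The task thus becomes to arrange the factorization of $p$ so that each stage matches the input format of that theorem, and to verify its hypotheses.

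The three conditions feed into this verification in distinct ways. Condition (1), the existence of an $\E_1$-$\E_2$ factorization of every morphism, guarantees that the fibers of the forgetful maps are nonempty and that the section $s$ can be produced on objects and $1$-simplices to begin with. Condition (3), admissibility of $\E_1$ and $\E_2$ — identities, stability under pullback, and the cancellation property — is exactly what makes the combinatorics legitimate: it ensures that an $n \times n$ grid can be populated by genuine pullback squares, that degenerate edges give degenerate grids, and that restrictions and diagonal composites of such grids again have vertical edges in $\E_1$ and horizontal edges in $\E_2$, so that $\dd^*_2\Ca^{\op{cart}}_{\E_1,\E_2}$ really is a simplicial set and $p$ a simplicial map. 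Condition (2), the $k$-truncatedness of $\E_1 \cap \E_2$, is the decisive input supplying the truncatedness hypothesis of \cite[Theorem 4.1.1]{chowdhury2023sixfunctorformalismsi}, by controlling the homotopy type of the space of factorizations of a fixed morphism.

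The main obstacle is precisely this last point: showing that the space of compactification data of a fixed morphism is contractible, and that these contractions assemble coherently across all simplicial degrees. Heuristically, truncatedness is what replaces the classical fact that compactifications of a morphism form a cofiltered, hence contractible, indexing category: any two factorizations are compared through a common refinement built as a pullback, and the $k$-truncatedness of the overlap class $\E_1 \cap \E_2$ bounds and ultimately annihilates the higher homotopy of the resulting comparison, upgrading connectedness to contractibility. Executing this uniformly over the grids — so that the fiberwise contractions are natural enough to present the comparison maps as the fiberwise-contractible maps to which \cite[Theorem 4.1.1]{chowdhury2023sixfunctorformalismsi} applies — is the technical heart of the argument; once it is in place, the categorical equivalence of $p$, and hence the existence of $g'$, follows formally.
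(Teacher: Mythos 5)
Your high-level skeleton --- factor $p$ through intermediate grid-type simplicial sets and reduce each stage to \cref{maintechnicalsimplicesthm} by verifying weak contractibility of a category of factorization data --- is indeed the paper's strategy, which factors $p = p_{\op{comm}} \circ p_{\op{cart}}$ through the commutative-grid simplicial set $\dd^*_2\Ca_{\E_1,\E_2}$ and proves \cref{thmA} and \cref{thmB} separately. But two of your load-bearing claims are wrong. First, \cref{maintechnicalsimplicesthm} is not a criterion certifying that a comparison map is a categorical equivalence, and it has no truncatedness hypothesis at all: its hypotheses are weak contractibility of the values $\N(n,\sigma)$ and compatibility with $f'$, and its conclusion is only the existence of an extension $f$ with $f \circ i \cong f'$, i.e.\ essential surjectivity of the restriction functor, not fullness or faithfulness. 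Aiming at ``categorical equivalence, if possible trivial fibration'' is therefore both stronger than what the available tool delivers and stronger than needed; your own deduction uses only essential surjectivity. Relatedly, the suggestion that one can take a strict section $s$ of $p$ and set $g' := g \circ s$ is unjustified: $\dd^*_2\Ca^{\op{cart}}_{\E_1,\E_2}$ and $\dd^*_2\Ca_{\E_1,\E_2}$ are not $\infty$-categories (the paper notes this explicitly), $p$ is not a trivial fibration, and no strictly coherent choice of compactifications exists --- producing such choices up to coherent homotopy is precisely the content of the theorem, not an input to it.

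Second, and more seriously, you misidentify where condition (2) (truncatedness of $\E_1 \cap \E_2$) enters. It is not used to ``upgrade connectedness to contractibility'' of the space of factorizations: in the paper, weak contractibility of the $\infty$-category of compactifications $\Kpt(\tau)$ (\cref{kptweaklycontractible}) follows from cofilteredness, using only conditions (1) and (3) (existence of factorizations plus pullbacks along admissible edges), and contractibility of the $\infty$-category of cartesianizations $\Krt(\tau)$ (\cref{KrtcontKancomplex}) follows from the theory of right Kan extensions, again using only pullbacks. Truncatedness instead powers a descending induction in the cartesian stage (\cref{thmB}): one stratifies $\dd^*_2\Ca_{\E_1,\E_2} = \bigcup_{i \ge -2}\dd^*_2\Ca^i_{\E_1,\E_2}$ by the truncation level of the comparison map $x \to x'$ from a square to its associated pullback square, and extends $g$ one level at a time from $\dd^*_2\Ca^{i-1}_{\E_1,\E_2}$ to $\dd^*_2\Ca^i_{\E_1,\E_2}$; the key point (\cref{truncatedCartalphaconstruction}, resting on \cref{truncationdecreased}) is that decomposing a square through its pullback drops the truncation level by one, so the induction terminates at level $-2$, where squares are already cartesian. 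Without this stratification your plan has no mechanism for passing from cartesian grids to arbitrary commutative grids: the fiberwise contractibility you propose to verify never sees condition (2), so the cartesian-to-commutative stage of your argument would fail as written.
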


Notice that the morphism $p$ admits the following factorization :
\begin{equation}
    \dd^*_2\Ca^{\op{cart}}_{\E_1,\E_2} \xrightarrow{p_{\op{cart}}} \dd^*_2\Ca_{\E_1,\E_2} \xrightarrow{p_{\op{comm}}} \to \Ca. 
\end{equation}
where the middle simplicial set is defined in a similar way as $\dd^*_2\Ca^{\op{cart}}_{\E_1,\E_2}$. This simplicial set comprises $n$-simplices, which are $n \times n$ grids as above but with commutative squares (not necessarily pullback squares). 

The proof of \cref{compthm} follows from extending the morphism $g$ along $p_{\op{cart}}$ and $p_{\op{comm}}$. Thus, the above theorem follows from the following two theorems:

\begin{theorem}[Theorem A : Extension along $p_{\op{comm}}$]\label{thmA}
	Let $\Ca$ be an $\infty$-category and $\E_1,\E_2$ be a collection of edges in $\Ca$ with the following conditions:
  \begin{enumerate}
      \item For every morphism in $f \in \Ca$, there exists a $2$-simplex in $\Ca$ of the form :
      \begin{equation}
          \begin{tikzcd}
              {} & y \arrow[dr,"p"] & {}\\
              x \arrow[ur,"q"] \arrow[rr,"f"] && z
              \end{tikzcd}
              \end{equation}
              where $p \in \E_1$ and $q \in \E_2$.

\item The edges $\E_1$ and $\E_2$ are admissible.
 \end{enumerate}
  Then for any $\infty$-category $\D$, there exists a solution to the lifting problem:
\begin{equation}
\begin{tikzcd}
    \dd^*_2\Ca_{\E_1,\E_2} \arrow[d,"p_{\op{comm}}",swap] \arrow[r,"g_{\op{comm}}"] &\D\\
    \Ca \arrow[ur,"g'_{\op{comm}}",swap,dotted] & {}.
    \end{tikzcd}
\end{equation}
  \end{theorem}

  \begin{theorem}[Theorem B : Extension along $p_{\op{cart}}$]\label{thmB}
    		Let $\Ca$ be an $\infty$-category and $\E_1,\E_2$ be a collection of edges in $\Ca$ with the following conditions:
  \begin{enumerate}
\item Every morphism $f \in \E_1\cap \E_2$ is $k$-truncated for $k \ge -2$.
\item The edges $\E_1$ and $\E_2$ are admissible.

  \end{enumerate}
  Then for any $\infty$-category $\D$, there exists a solution to the lifting problem:
\begin{equation}
\begin{tikzcd}
    \dd^*_2\Ca^{\op{cart}}_{\E_1,\E_2} \arrow[d,"p_{\op{cart}}",swap] \arrow[r,"g_{\op{cart}}"] &\D\\
    \dd^*_2\Ca_{\E_1,\E_2} \arrow[ur,"g'_{\op{cart}}",swap,dotted] & {}.
    \end{tikzcd}
\end{equation}
    
  \end{theorem}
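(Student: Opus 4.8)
The plan is to solve the lifting problem by exhibiting $p_{\op{cart}}$ as a homotopy deformation retract: I would construct a map of simplicial sets $L \colon \dd^*_2\Ca_{\E_1,\E_2} \to \dd^*_2\Ca^{\op{cart}}_{\E_1,\E_2}$, the \emph{iterated pullback completion}, together with an equivalence $L\circ p_{\op{cart}} \simeq \op{id}$, and then feed this into the standard lifting machinery for the fibrant target $\D$. First I would record that $p_{\op{cart}}$ is a monomorphism: an $n$-simplex of $\dd^*_2\Ca^{\op{cart}}_{\E_1,\E_2}$ is an $n\times n$ grid which is in particular a commutative grid, and $p_{\op{cart}}$ merely forgets the cartesian condition, so it is injective on simplices in every degree. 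Hence $p_{\op{cart}}$ is a cofibration in the Joyal model structure, while $\D$, being an $\infty$-category, is fibrant.

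The construction of $L$ is the heart of the argument. Given a commutative grid, I would keep its last row and last column untouched and rebuild every interior vertex $X_{ij}$ (for $i,j<n$) as the pullback $X_{i,j+1}\times_{X_{i+1,j+1}}X_{i+1,j}$, proceeding inductively in order of decreasing $i+j$ from the bottom-right corner. The admissibility of $\E_1,\E_2$, in particular stability under pullback, guarantees that each such pullback exists in $\Ca$ and that the rebuilt horizontal (resp. vertical) edges remain in $\E_2$ (resp. $\E_1$), so that $L$ genuinely lands in the cartesian grids; the corner comparison maps assemble into a natural transformation $\op{id}\Rightarrow p_{\op{cart}}\circ L$. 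Applying $L$ to a grid that is already cartesian returns the same grid up to the canonical equivalence supplied by uniqueness of pullbacks, which furnishes the required $L\circ p_{\op{cart}}\simeq\op{id}$; as this equivalence is pointwise invertible, it is in fact a $J$-homotopy.

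The main obstacle is upgrading this pointwise recipe for $L$ to an honest map of simplicial sets, coherent across all face and degeneracy operators, and this is exactly where the truncation hypothesis enters. Since pullbacks in an $\infty$-category are defined only up to contractible choice, defining $L$ amounts to a compatible system of such choices, and the potential obstruction lives in the higher coherences of iterated base change along the diagonal of the grid, precisely along those edges lying in $\E_1\cap\E_2$, which can be read simultaneously as horizontal and as vertical. The assumption that every $f\in\E_1\cap\E_2$ is $k$-truncated forces the associated tower of diagonal maps $X\to X\times_Y X\to\cdots$ to stabilise, so that the spaces of compatible pullback completions are contractible and the coherence data can be filled. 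I would carry this out as an induction on simplicial dimension, solving at each stage an extension problem whose space of solutions is contractible by the truncation estimate, thereby assembling $L$ over successive skeleta.

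Finally, with $L$ in hand I would conclude as follows. For the given $g_{\op{cart}}\colon\dd^*_2\Ca^{\op{cart}}_{\E_1,\E_2}\to\D$, the composite $g_{\op{cart}}\circ L$ restricts along $p_{\op{cart}}$ to $g_{\op{cart}}\circ(L\circ p_{\op{cart}})\simeq g_{\op{cart}}$, so it is an extension up to equivalence. To produce a \emph{strict} solution of the lifting diagram I would rectify: the equivalence $g_{\op{cart}}\circ L\circ p_{\op{cart}}\simeq g_{\op{cart}}$ is a $J$-homotopy, and because $p_{\op{cart}}$ is a cofibration, the pushout--product inclusion of $\dd^*_2\Ca^{\op{cart}}_{\E_1,\E_2}\times J$ together with $\dd^*_2\Ca_{\E_1,\E_2}\times\{0\}$ into $\dd^*_2\Ca_{\E_1,\E_2}\times J$ is a trivial cofibration. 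Extending this homotopy against the fibrant $\D$ and restricting to the opposite endpoint yields the desired $g'_{\op{cart}}$ with $g'_{\op{cart}}\circ p_{\op{cart}}=g_{\op{cart}}$, which solves the lifting problem.
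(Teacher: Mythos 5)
There is a genuine gap at the heart of your proposal: the ``iterated pullback completion'' $L$ you describe is not a map of simplicial sets, and the failure is strict, not a coherence issue that contractibility of pullback choices (or any truncation bound) could repair. Recall that a face map $d_k$ on $\dd^*_2\Ca_{\E_1,\E_2}$ deletes the $k$-th row \emph{and} the $k$-th column of a grid simultaneously. Take a $2$-simplex, i.e.\ a grid with vertices $X_{ij}$, $0\le i,j\le 2$, and compare $d_2(L(\sigma))$ with $L(d_2(\sigma))$. Your recipe applied to $\sigma$ rebuilds
\begin{equation}
X_{11}'=X_{12}\times_{X_{22}}X_{21},\qquad X_{01}'=X_{02}\times_{X_{12}}X_{11}',\qquad X_{10}'=X_{11}'\times_{X_{21}}X_{20},\qquad X_{00}'=X_{01}'\times_{X_{11}'}X_{10}',
\end{equation}
so $d_2(L(\sigma))$ has vertices $X_{00}',X_{01}',X_{10}',X_{11}'$; whereas $L(d_2(\sigma))$ keeps $X_{01},X_{10},X_{11}$ untouched and only replaces $X_{00}$ by $X_{01}\times_{X_{11}}X_{10}$. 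Unless the square $(X_{11},X_{12},X_{21},X_{22})$ was already cartesian, $X_{11}'$ is not even equivalent to $X_{11}$, so the two results are genuinely different grids. Thus $L$ cannot be assembled over skeleta no matter how the pullbacks are chosen; what you are missing is not coherence data but a mechanism for comparing the ``local'' completions attached to different simplices. (Your final rectification step --- $p_{\op{cart}}$ is a monomorphism, hence a Joyal cofibration, $\D$ is fibrant, and the pushout--product with $\{0\}\hookrightarrow J$ is a trivial cofibration --- would be fine \emph{if} $L$ and the homotopies existed, but the input is missing.)

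A secondary error: you invoke the truncation hypothesis to make ``spaces of compatible pullback completions'' contractible, but spaces of limit cones are always contractible when the limits exist; truncation plays no role there. In the paper the hypothesis does something quite different: it powers an induction on a filtration $\dd^*_2\Ca^{\op{cart}}_{\E_1,\E_2}=\dd^*_2\Ca^{-2}_{\E_1,\E_2}\subset\cdots\subset\dd^*_2\Ca^{i}_{\E_1,\E_2}\subset\cdots$ with union $\dd^*_2\Ca_{\E_1,\E_2}$, where $\dd^*_2\Ca^{i}_{\E_1,\E_2}$ consists of grids whose squares decompose with $i$-truncated comparison map $x\to x'$; the key point (\cref{truncatedCartalphaconstruction}, via \cref{truncationdecreased}) is that cartesianizing such a square produces squares whose comparison maps are $(i-1)$-truncated, so the extension $g^{i}_{\op{cart}}$ can be built from the already-constructed $g^{i-1}_{\op{cart}}$, terminating at $-2$-truncated $=$ cartesian. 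Moreover, instead of a global completion functor, the paper works simplex-by-simplex: to each $n$-simplex $\tau$ it attaches the contractible Kan complex $\Krt(\tau)$ of right Kan extensions along $\varsigma^n:\Delta^n\times\Delta^n\to\Cart^n$ (\cref{KrtcontKancomplex}), and glues these local choices into an actual extension using the category-of-simplices theorem \cref{maintechnicalsimplicesthm} together with the inner anodyne inclusion $\boxplus^n_{\op{cart}}\hookrightarrow\Cart^n$ (\cref{cartninneranodyne}). That local-to-global machinery is precisely the substitute for the nonexistent map $L$.
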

In the first article (\cite{chowdhury2023sixfunctorformalismsi}), we reprove a technical theorem that states conditions for solving lifting problems appearing in the above theorems. We shall use \cite[Theorem 4.1.1]{chowdhury2023sixfunctorformalismsi} to prove both theorems. \\ To apply the above theorem, we first need to study specific simplicial sets that encode compactifications and cartesian squares. Part of this article fairly relies on understanding these specific simplicial sets and their properties.\\

We briefly outline the sections of the article :
\begin{enumerate}
    \item In Section 1, we recall the notion of multi-simplicial sets and their variants, which encode markings and tilings. These notions provide a convenient way of understanding the simplicial sets in the setting of abstract six-functor formalism.
    '\item In Section 2, we recall the relevant definition and the main theorem from \cite{chowdhury2023sixfunctorformalismsi}. We also lay out a road map on how we approach using this theorem in proving \cref{thmA} and \cref{thmB}
    \item In Section 3, we prove \cref{thmA}. A majority part of this section involves the combinatorics understanding the \textit{$\infty$-category of compactifications} (\cref{inftycatofcompactificationsdef}), which is a combinatorial way to encode compactifications for $n$-composable morphisms. Proving relevant properties of this simplicial set leads us to prove \cref{thmA}.
    \item In Section 4, we prove \cref{thmB}. Analog to the previous section, here we study the \textit{$\infty$-category of Cartesianizations} (\cref{inftycatofcartesianizationsdef}). This is a combinatorial way to decompose commutative squares into pullback squares. We prove the theorem by producing analogous results as in the previous section, hence completing the proof of the \cref{compthm}.
    \item In Appendix A, we discuss the combinatorial properties of partially ordered sets, which are relevant to the specific simplicial sets considered in Sections 3 and 4.
    \item In appendix B, we discuss properties of the existence of limits in overcategories. These technical statements are needed in proving the weak contractibility of simplicial sets in Sections 3 and 4, which is a key point in applying \cref{maintechnicalsimplicesthm}.
    \item In Appendix C, we recall the notion of $k$-truncated morphisms and their properties.
    \end{enumerate}

\subsection*{ Acknowledgements:}

The paper was written while the author was a PostDoc under Prof.Dr. Timo Richarz at University of TU Darmstadt. C.Chowdhury acknowledges support (through Timo Richarz) by the European Research Council (ERC) under Horizon Europe (grant agreement nº 101040935), by the Deutsche Forschungsgemeinschaft (DFG, German Research Foundation) TRR 326 \textit{Geometry and Arithmetic of Uniformized Structures}, project number 444845124 and the LOEWE professorship in Algebra, project number LOEWE/4b//519/05/01.002(0004)/87. \\

The author would like to thank Alessandro D'Angelo and R{\i}zacan \c{C}ilo\u{g}lu for helpful discussions regarding the paper. 

\subsection*{Conventions:}

The paper relies on notations and definitions from the paper \cite{Gluerestnerv}. We shall omit referencing the paper as it will be implicit throughout the article. We also freely use the language of $\infty$-categories developed by Lurie in \cite{HTT}, \cite{HA} and \cite{SAG}. 

\section{Multisimplicial, multi-marked and multi-tiled simplicial sets.}
	\subsection{Multisimplicial sets.}
	Let $I$ be a finite set and consider it as a discrete category.
	\begin{definition}
		An \textit{I-simplicial set} is a functor:
		\[ \op{Fun}(I,\Delta)^{op}=  (\underbrace{\Delta \times \Delta  \cdots \Delta}_{\text{I-times}})^{op} \to \Set  \]
		We denote the category of $I$-simplicial sets by $\Set_{I\Delta}$. If $I=\{1,2,\cdots,k\}$, then we denote it by $\op{Set}_{k\Delta}$.
	\end{definition}
	\begin{remark}
		By definition, $\Set_{1\Delta}=\Set_{\Delta}$ and similarly $\Set_{2\Delta}$ is the category of bisimplicial sets.
	\end{remark}
	\begin{notation}
		We shall denote any object $(n_i)_{i \in I} $ of $\op{Fun}(I,\Delta)$ by $\niI$
		We denote $\Delta^{\niI}$ to be the $I$-simplicial set represented by $\prod_i\Delta^{n_i}$. For an $I$-simplicial set, we denote $S_{\niI}$ by $S(\niI)$. 
	\end{notation}
	
	We discuss adjunctions between $\Set_{k\Delta}$ and $\Set_{\Delta}$. 
	
	\begin{notation}
		\begin{enumerate}
			\item Denote $f: I=\{1,2,\cdots,k\}\to \{1\}$ be the projection map. This induces the functor $\Delta \to \op{Fun}(I,\Delta)$ which induces the \textit{diagonal functor}:
			\[ \delta^*_k:\Set_{k\Delta} \to \Set_{\Delta}  \] 
			which takes an $k$-simplicial set $S$ to $\delta^*_k(S)$ which evaluated on $[n]$ is $ S([n],[n],\cdots [n])$. \\
			This functor has a right adjoint:
			\[\delta^k_*:\Set_{\Delta} \to \Set_{k\Delta}  \]
			which evaluated on $S$, defines a $k$-simplicial set defined as 
			\[ \delta^k_*(S)_{\niI} = \op{Hom}_{\Set_{\Delta}}(\prod_{i \in I} \Delta^{n_i},S) \]
			\item  Similarly an injection of sets $f: J \hookrightarrow I$ induces a functor $(\Delta_f)^*: \Set_{J\Delta} \to \Set_{I\Delta}$ induced from $f$. It has a right adjoint, which we denote by 
			\[ \ep^I_J: \Set_{I\Delta} \to \Set_{J\Delta} \]
			defined by 
			\[ \ep^I_J(S)(\niI) = S((\niI,0)) \]
			where  we write $(\niI,0)$ for the vector with entries $0$ for $i \neq j$. We call the map $\ep^I_J$ as \textit{restriction functor}. \\
			If $I=\{1,2,\cdots,k\}$ and $J =\{j\}$, then we denote it by $\ep^k_j$. 
			\item Given $I=\{1,2,\cdots,k\}$ and $J \subset I$. We have the \textit{partial opposite} functor \[\op{op}^I_J: \Set_{k\Delta} \to \Set_{k\Delta} \] defined by taking opposite edges along the directions $j \in J$.  Using this notion, we define the \textit{twisted diagonal } functor as  \[ \dd^*_{k,J}:= \dd^*_I \circ \op{op}^I_J: \Set_{k\Delta} \to \Set_{\Delta}. \]
		\end{enumerate}
	\end{notation}
	\begin{example}
		\begin{enumerate}
			\item 	The map $\dd^2_*$ takes a simplicial set $S$ to the bisimplicial set $\dd^2_*S$ whose $(n_1,n_2)$ simplices are $\op{Hom}_{\sset}(\Delta^{n_1} \times \Delta^{n_2},S)$. If $S = \Ca$ where $\Ca$ is an ordinary category, then these are just $n_1 \times n_2$ grids in $\Ca$. \\
			The map $\dd^*_2$ takes a bisimplicial set to its diagonal simplicial set. For $S =\dd^2_*N(C)$, the $n$-simplices of the simplicial set  $\dd^*_2(\dd^2_*\Ca$ are morphisms $\Delta^n \times \Delta^n \to \Ca$ (in other words these are $n \times n$ grids in $C$).\\
			\item 	The maps $\ep^2_1$ and $\ep^2_2$ send a bisimplicial set $S':(\Delta \times \Delta)^{op} \to \Set $ to the simplicial sets $S'|_{\Delta \times [0]}$ and $S'|_{[0] \times \Delta}$ respectively, i.e.. these are the restrictions to the first row and column of the bisimplicial set.
			
			\item For $k=1$, the twisted diagonal functor sends a simplicial set $S$ to $S^{op}$.For $k=2$, the partial opposite functor $\op{op}^2_{\{1\}}$ takes a bisimplicial set $S$ and sends to the bisimplicial set $S'$ which when restricted to direction $1$ gives the simplicial set $(\ep^2_1 S)^{op}$ and when restricted to direction $2$ gives the simplicial set $\ep^2_2S$.  In order to understand it more clearly, let us consider the bisimplicial set $\dd^2_*\Ca$. Then the $n$ simplices of the simplicial set $\dd^*_{2,\{1\}}(\dd^2_{*}\Ca)$ are given by $n \times n$ grids $(\Delta^n)^{op} \times \Delta^n \to \Ca$.
			
		\end{enumerate}
	\end{example}
	
	\subsection{Multi-marked and multi-tiled simplicial sets.}
	
	\begin{definition}
		An \textit{$I$-marked simplicial set} is the data $(S,\E:=\{\E_i\}_{i \in I})$ where $S$ is a simplicial set and $\E$ is a set of edges $\E_i$ containing every degenerate edge of $S$. A morphism between $I$-marked simplicial sets $(S,\E)$ and $ (S',\E')$ is a morphism of simplicial sets $ f: X \to X'$ with the property $f(\E_i) \subset \E'_i$. We denote the category of $I$-marked simplicial sets as $\Set^{I+}_{\Delta}$. If $I = \{1,2,\cdots, k\}$, we denote the category of $I$-marked simplicial sets by $\Set^{k+}_{\Delta}$. 
	\end{definition}
	\begin{remark}
		An $I$-marked simplicial set is said to be an $I$-marked $\infty$-category if the underlying simplicial set is an $\infty$-category. \\
		For $k=1$, we get the notion of marked simplicial sets defined in \cite[Section 3.1]{HA}. 
	\end{remark}
	
	\begin{notation}
		
		\begin{enumerate}
			\item Given any $I$-simplicial set $S$, we can define an $I$-marked simplicial set $\dd^*_{I+}(S') = (\dd^*_I S', \E = \{ (\epsilon^I_iS)_1\}_{i \in I})$. When $k=2$, the marked simplicial set $\dd^*_{+2}(S')$ consists of the diagonal simplicial set of $S'$ with the marked edges being the edges of the simplicial set of the first row and first column of the bisimplicial set.
			\item Given any $I$-marked simplicial set $(S,\E)$, we can define an $I$-simplicial set $\dd^{I+}_*(S,\E)$ as the sub $I$-simplicial set of $\dd^I_*S$ which consists only of edges $\E_i$ in simplicial set $\ep^I_i(S)$. 
		\end{enumerate}
		
	\end{notation}
	This notion yields us to define the notion of restricted simplicial nerve. 
	\begin{definition}
		Let $(S,\E)$ be an $I$-marked simplicial set, then we define the \textit{restricted $I$-simplicial nerve} as 
		\[ S_{\E}:= \dd^{I+}_* (S,\E) \]
	\end{definition}
	\begin{example}
		Let $(S,\E) = (N(\Sc), \{P,O\})$ where $P$ and $O$ are the set of proper morphisms and open immersions respectively. Then $S_{\E}$ is the bisimplicial subset of the bisimplicial set $\dd^2_*N(\Sc)$ which consists of only proper morphisms as edges in the simplicial set $\ep^2_1(\dd^2_*N(\Sc))$ and open immersions as edges in the simplicial set $\ep^2_2(\dd^2_*N(\Sc))$.
	\end{example}
	\begin{definition}
		An \textit{I-tiled simplicial set} is the data $(S,\E=\{\E_i\}_{i\in I},\Q=\{\Q_{ij}\}_{i,j \in I, i \neq j})$ where $(X,\E)$ is a marked simplicial set and $\Q$ is a collection of set of squares $\Q_{ij}$ (i.e. $\Delta^1 \times \Delta^1 \to S$) such that 
		\begin{enumerate}
			\item the set of squares $\Q_{ij}$ and $\Q_{ji}$ are obtained from each other by transposition.
			\item The vertical arrows of each square in $\Q_{ij}$ are in $\E_i$ and the horizontal arrows are in $\E_j$.
			\item To every edge in $\E_i$, there is a square in $\Q_{ij}$ induced by the map $\op{id} \times s^0_0$. 
		\end{enumerate} A morphism of $I$-tiled simplicial sets $ f: (S,\E,\Q) \to (S',\E',\Q')$ which maps $f(\E_i) \subset \E'_i$ and $f(\Q_{ij}) \subset \Q'_{ij}$. We denote the category of $I$-tiled simplicial sets by $\Set^{I\bx}_{\Delta}$.
	\end{definition}
	
	\begin{notation}
		
		\begin{enumerate}
			\item Given any $I$-simplicial set $S$, we define an $I$-tiled simplicial set $\dd^*_{I\bx} (S):= (\dd^*_I S, \E,\Q)$ where $\E=\{\E_i= (\epsilon^I_i S')_1\}_{i\in I}$ and $\Q = \{\Q_{ij}= \op{Hom}(\Delta^1 \times \Delta^1,\dd^*_2\epsilon^I_{i,j}(S))\}_{i,j \in I, i \neq j}$. 
			\item Given any $I$-tiled simplicial set $(S',\E',\Q')$, we define an $I$-simplicial set $\dd^{I\bx}_*((S',\E',\Q'))$ as the  $I$-simplicial subset of $\dd^{I+}_*(S',\E')$ such that for $j,k \in I$ and $j \neq k$, every square in the simplicial set $\dd^*_2\ep^I_{jk}(\dd^{I+}_*(S',\E'))$ associated to any $(1,1)$-simplex lies in $\Q_{jk}$.
		\end{enumerate}
	\end{notation}
	\begin{remark}
	  Let $S$ be a bisimplicial set. Given any $(1,1)$-simplex of $S$, we can define a square in the diagonal simplicial set $\dd^*_2S$ as follows. A $(1,1)$-simplex corresponds to a morphism $\tau: \Delta^{(1,1)} \to S$. Applying the functor $\dd^*_2 (-)$, we get a morphism \[ \dd^*_2(\tau): \Delta^1 \times \Delta^1 \to \dd^*_2S. \] \\
		If $S = N(\Sc')_{P,O}$, then a square in $S$ corresponds to a morphism $ \Delta^1 \times \Delta^1 \to N(\Sc')$ where horizontal arrows are proper and vertical arrows are open. 
	\end{remark}
	\begin{definition}
		Let $\Ca$ be and $\infty$-category and $\E_1,\E_2$ be set of edges, denote $\E_1 \star^{\op{cart}} \E_2$ be the set of Cartesian squares. For an $I$-marked $\infty$-category $(\Ca,\E:=\{\E_i\}_{i \in I})$, we denote $\E^c_{ij}:= \E_i \star^{\op{cart}} \E_j$. Denote $ (\Ca,\E,\E^c)$ to be the $I$-tiled $\infty$-category. We define the \textit{Cartesian $I$-simplicial nerve} to be the $I$-simplicial set 
		\[ \Ca^{\op{cart}}_{\E}:= \dd^{I\bx}_*((\Ca,\E,\E^c)) \] 
	\end{definition}
	
	\begin{example}
		The bisimplicial set $N(\Sc)^{\op{cart}}_{P,O}$ is the sub-bisimplicial set of $\dd^2_*N(\Sc)$ which consists of proper morphisms as edges in one direction, open immersions as edges in other and every square formed by open and proper morphisms is a pullback square. \\
		
		Let us understand the simplicial set $\dd^*_{k}\Ca^{\op{cart}}_{\E}$ which will be the source of the enhanced operation map. A $n$-simplex of  $\dd^*_k\Ca^{\op{cart}}_{\E}$ is a morphism $\sigma_n:\underbrace{\Delta^n \times \Delta^n \cdots \times \Delta^n}_{ \op{k-times}} \to \Ca$  such that every edge  $\sigma_n|_i: \Delta^1 \to \Ca$ in direction $i$ lies in $\E_i$ for every $ i \in I$ and for every $ j \neq j' \in I$, the square $\sigma_n|_{j,j'}: \Delta^1 \times \Delta^1 \to \Ca$ is a pullback square formed by edges $\E_j$ and $\E_{j'}$. In case $ k=2$ and $(C,\E) = (\Sc,\{P,O\})$, the $n$-simplices of $\dd^*_2N(\op{Sch}')^{cart}_{P,O}$ are $n \times n$ grids of the form 
		
		\begin{equation}
			\begin{tikzcd}
				X_{00} \arrow[r] \arrow[d] & X_{01} \arrow[r] \arrow[d] & \cdots & X_{0n} \arrow[d] \\
				X_{10} \arrow[r]  & X_{11} \arrow[r] & \cdots & X_{1n} \\
				\vdots & \vdots & \vdots & \vdots \\
				X_{n1} \arrow[r] & X_{n2} & \cdots & X_{nn}
			\end{tikzcd}
		\end{equation}
		where vertical arrows are proper, horizontal arrows are open and each square is a pullback square. 
	\end{example}
	
\section{Recollection of key results from \cite{chowdhury2023sixfunctorformalismsi}. }
In this section, we recall the relevant notions and \cite[Theoerem 4.1.1]{chowdhury2023sixfunctorformalismsi}from \cite{chowdhury2023sixfunctorformalismsi}.

\subsection{The global section functor.}
\begin{definition}
    Let $\J$ be a (small) ordinary category. Let $(\sset)^{\J}$ be the category where objects are functors from $\J \to \sset$ and morphisms are natural transformations.
\end{definition}
We now introduce the constant and global section functor related to $(\sset)^{\J}$.
\begin{notation}
    For every simplicial set $X$, we have the constant simplicial set functor $c(X):=X_{\J}$ defined by sending any object $j$ to the simplicial set $X$. The association is functorial and thus we have a functor :
     \[ c: \sset \to (\sset)^{\J}\]
    
\end{notation}

\begin{definition}\label{globalsectionfunctor}
We define the \textit{global section functor} 
\[ \Gamma : (\sset)^{\J} \to \sset\]
as follows : 
\[ \Gamma(F) =(\Gamma(F)_n := \op{Hom}_{(\sset)^{\J})}(\Delta^n_{\J},F))_n.\]
\end{definition}
\begin{example}
Let $F$ be the constant functor $c(X)$ where $X \in \sset$. Let us compute $\Gamma(F)$.
The $n$-simplices of $\Gamma(F)$ are given by the set of natural transformations from $\Delta^n_J \to c(X)$. Every such natural transformation is equivalent to give a single map $\Delta^n \to X$. In particular the $n$-simplices of $\Gamma(F)$ are given by $n$-simplices of $X$.Thus $\Gamma(F)= X$. In particular, we prove that $\Gamma \circ c = \op{id}_{\sset}$.

\end{example}
\begin{remark}
Recall from classical category theory, given a complete category $\Ca$ and an small category $I$, we have the pair of adjoint functors: 
 \[ c: \Ca \leftrightarrows \op{Fun}(I,\Ca) : \op{lim}\]
 where $\op{lim}$ is the functor which takes an object which is a functor $F : I \to \Ca$ to its limit $\op{lim}(F) \in \Ca$. \\

 Let $\Ca= \sset$ and $I=\J$. As the category of simplicial sets is complete,  we see that 
 \[ \Gamma = \op{lim}. \]
 In the other words, the global section functor is the limit functor which takes every functor to its limit in the category of simplicial sets.
 
\end{remark}
\subsection{Category of simplices.}
\begin{definition}\label{catofsimpldef}
    Let $K$ be a simplicial set. Then the \textit{category of simplicies over $K$} is a category consisting of :
    \begin{enumerate}
        \item Objects : $(n,\sigma)$ where $n \ge 0$ and $\sigma \in K_n$.
        \item Morphisms: $ p:(n,\sigma)\to (m,\sigma')$ is a morphism $p: [n] \to [m]$ such that $p(\sigma)=\sigma'$.
    \end{enumerate}
\end{definition}

The relevant functor associated to the category of simplices is the mapping functor. 
\begin{definition}\label{mappingfunctor}
    Let $K$ be a simplicial set and $\Ca$ be a $\infty$-category. The \textit{mapping functor} \[ \op{Map}[K,\Ca] : (\Delta_{/K})^{op} \to \sset \] is defined as follows: 
    \[ (n,\sigma) \to \op{Map}^{\sharp}((\Delta^n)^{\flat},\Ca^{\natural}) \cong \op{Fun}^{\cong}(\Delta^n,\Ca). \]
     Here $\op{Map}^{\sharp}((\Delta^n)^{\flat},\Ca^{\natural})$ is the internal mapping space in the category of marked simplicial sets. It is the largest Kan complex contained in $\op{Fun}(\Delta^n,\Ca)$.
 \end{definition}

 \begin{remark}

       For a simplicial set $K$  and an $\infty$-category $\Ca$, we have the following equality of simplicial sets: 
     \[ \Gamma(\op{Map}[K,\Ca]) = \op{Map}^{\sharp}(K^{\flat},\Ca). \] This is proved in \cite[Lemma 3.3.3]{chowdhury2023sixfunctorformalismsi}

 \end{remark}

 \subsection{The main theorem and road map for proving \cref{thmA} and \cref{thmB}.}
 We recall the main theorem from \cite{chowdhury2023sixfunctorformalismsi}. 
 \begin{theorem}\label{maintechnicalsimplicesthm}\cite[Theorem 4.1.1]{chowdhury2023sixfunctorformalismsi}
Let $K',K$ be simplicial sets and $\Ca$ be a $\infty$-category. Let $f': K' \to \Ca$ and $i:K' \to K$ be morphisms of simplicial sets. Let $\N \in (\sset)^{(\Delta_{/K})^{op}}$ and $\alpha: \N \to \op{Map}[K,\Ca]$ be a natural transformation. If
\begin{enumerate}
 \item (\textit{Weakly contractibility}) for $(n,\sigma) \in \Delta_{/K}$, $\N(n,\sigma)$ is weakly contractible,
 \item (\textit{Compatability with $f'$}) there exists $\omega \in \Gamma(i^*\N)_0$ such that $\Gamma(i^*\alpha)(\omega)=f'$,
 \end{enumerate}
 
then there exists a map $f: K \to \Ca$ such that the following diagram 
\begin{equation}
    \begin{tikzcd}
        K' \arrow[r,"f'"] \arrow[d,"i"] & \Ca \\
        K \arrow[ur,"f"] & {}
    \end{tikzcd}
\end{equation}
 commutes. In other words, $f' \cong f \circ i $ in $\op{Fun}(K',\Ca)$.
\end{theorem}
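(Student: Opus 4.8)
The plan is to turn the existence of $f$ into a lifting problem for global sections and then solve that problem using the weak contractibility hypothesis. First I would record the two computations that make $\Gamma$ accessible on the relevant functors. By the Remark preceding the statement (i.e. \cite[Lemma 3.3.3]{chowdhury2023sixfunctorformalismsi}), $\Gamma(\op{Map}[K,\Ca]) = \op{Map}^{\sharp}(K^{\flat},\Ca)$, whose vertices are exactly the maps $K \to \Ca$; and since the value of $\op{Map}[K,\Ca]$ at $(n,\sigma)$ is $\op{Fun}^{\cong}(\Delta^n,\Ca)$, depending only on $n$, one gets $i^{*}\op{Map}[K,\Ca] = \op{Map}[K',\Ca]$ and hence $\Gamma(i^{*}\op{Map}[K,\Ca]) = \op{Map}^{\sharp}((K')^{\flat},\Ca)$. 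Applying $\Gamma$ to $\alpha$ and to the restriction along $i$ produces a commutative square
\begin{equation}
\begin{tikzcd}
\Gamma(\N) \arrow[r,"\Gamma(\alpha)"] \arrow[d,"r",swap] & \op{Map}^{\sharp}(K^{\flat},\Ca) \arrow[d,"\op{res}"] \\
\Gamma(i^{*}\N) \arrow[r,"\Gamma(i^{*}\alpha)",swap] & \op{Map}^{\sharp}((K')^{\flat},\Ca)
\end{tikzcd}
\end{equation}
in which hypothesis (2) supplies a vertex $\omega \in \Gamma(i^{*}\N)_{0}$ with $\Gamma(i^{*}\alpha)(\omega) = f'$. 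The whole theorem then reduces to a single assertion: the restriction map $r$ is a trivial Kan fibration. Indeed, granting this I lift $\omega$ through $r$ to a vertex $\tilde\omega \in \Gamma(\N)_{0}$, set $f := \Gamma(\alpha)(\tilde\omega) \colon K \to \Ca$, and read off from the square that $\op{res}(f) = \Gamma(i^{*}\alpha)(r(\tilde\omega)) = \Gamma(i^{*}\alpha)(\omega) = f'$, so that $f \circ i \cong f'$ in $\op{Fun}(K',\Ca)$, which is exactly the conclusion.

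The heart is therefore to show that $r$ is a trivial fibration, and this is where condition (1) must be used. After reducing to the case where $i$ is a monomorphism (by a mapping-cylinder factorization of $i$, or simply because $i$ is a monomorphism in the intended applications), I regard $K'$ as a simplicial subset of $K$ and filter $K$ by $L_{k} := K' \cup \op{sk}_{k}K$. Since $\Delta_{/K}$ is an elegant Reedy category, with degree given by simplicial dimension, inverse maps the degeneracies and direct maps the faces, this filtration expresses $r \colon \Gamma(\N) \to \Gamma(i^{*}\N)$ as the limit of a tower of pullbacks, the passage from level $k-1$ to level $k$ being a base change of the product, over the new nondegenerate $k$-simplices $\sigma$ of $K$, of the relative matching maps $\N(k,\sigma) \to M_{\sigma}\N$. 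I would then argue that the weak contractibility of each $\N(k,\sigma)$ forces each of these matching maps to be a trivial fibration; since trivial fibrations are stable under pullback and under composition of such towers, $r$ itself is a trivial fibration.

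The main obstacle I anticipate is precisely the last implication: extracting "trivial Kan fibration" for the matching maps from the purely homotopy-theoretic input that the fibers $\N(n,\sigma)$ are \emph{weakly} contractible, while $\Gamma$ is a \emph{strict} limit. A map with weakly contractible source need not be a trivial fibration, so the argument cannot be formal. I expect to resolve this by carrying out the whole construction in marked simplicial sets, the natural home of $\op{Map}^{\sharp}(-,\Ca)$, so that the lifting problems against $\partial\Delta^{m} \hookrightarrow \Delta^{m}$ that arise at each stage are governed by the Kan--Quillen model structure and "weakly contractible" behaves as "contractible" for the purpose of filling. A secondary but genuine technical point, again handled by the elegance of $\Delta_{/K}$, is the bookkeeping of degenerate simplices: one must check that they impose no constraints beyond those coming from the nondegenerate ones, which is exactly what makes the skeletal tower of pullbacks well-defined and lets the per-simplex weak contractibility propagate to $r$.
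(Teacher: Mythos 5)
The theorem you are proving is not actually proved in this paper at all: it is imported verbatim from the companion article \cite[Theorem 4.1.1]{chowdhury2023sixfunctorformalismsi}, so the comparison must be with that (Liu--Zheng style) argument. Your opening reduction is sound as a \emph{sufficient} condition: if the restriction $r\colon \Gamma(\N)\to\Gamma(i^{*}\N)$ were a trivial Kan fibration, the theorem would follow (indeed with $f\circ i=f'$ on the nose). But this is exactly where the proposal collapses, because that claim is not merely unproved from hypothesis (1) --- it is false. Take $K=\Lambda^2_1$, $K'=\emptyset$, $\Ca=\Delta^0$. Every simplex $\sigma$ of $K$ has a unique nondegenerate core $\sigma^{\mathrm{nd}}$, the assignment $(n,\sigma)\mapsto\sigma^{\mathrm{nd}}$ is a functor from $\Delta_{/K}$ to the face poset of $K$, and one may set $\N(n,\sigma):=M(\sigma^{\mathrm{nd}})$, where $M$ assigns $\Delta^0$ to $v_0,v_2,e_{01},e_{12}$ and $\Delta^1$ to $v_1$, the restriction $M(e_{01})\to M(v_1)$ hitting the vertex $0$ and $M(e_{12})\to M(v_1)$ hitting the vertex $1$. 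All values are contractible Kan complexes, $\alpha$ exists (uniquely), and hypothesis (2) is vacuous; yet $\Gamma(\N)=\emptyset$, since a global section would have to choose a single vertex of $\N(0,v_1)=\Delta^1$ equal to both $0$ and $1$. So $r\colon\emptyset\to\Delta^0$ is not surjective on vertices, and the relative matching map at $e_{01}$, namely $\Delta^0\to\Delta^0\times\Delta^1$, is not a trivial fibration either. Note that this failure has nothing to do with ``weakly contractible versus contractible Kan'': levelwise contractibility simply never controls matching maps, which encode a Reedy-fibrancy condition on the \emph{transition} maps of $\N$ that is neither assumed nor available. Indeed, in this paper's own application to Theorem A, $\N(\tau)$ is equivalent to $\Kpt(\tau)$, the nerve of a cofiltered category, which is weakly contractible but not Kan, and its face-restriction maps are not fibrations; moreover there $i=p_{\op{comm}}\colon\dd^*_2\Ca_{\E_1,\E_2}\to\Ca$ identifies distinct grids with the same diagonal, so your remark that ``$i$ is a monomorphism in the intended applications'' is also false.

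Your final paragraph correctly senses the obstacle, but the proposed repair (recasting the lifting problems in marked simplicial sets) cannot close it: the counterexample above exhibits a failure of the strict limit $\Gamma=\lim$ to be homotopy invariant, and no change of ambient category makes a strict limit into a homotopy limit. Two further red flags point the same way: your scheme would output a \emph{strictly} commuting triangle, and it never uses that $\Ca$ is an $\infty$-category, whereas the theorem deliberately asserts only $f\circ i\cong f'$ --- that weakening is essential, not cosmetic. Any correct proof, and in particular the cited one, has to be homotopy-coherent: the good object here is not $\N$ but the target $\op{Map}[K,\Ca]$, whose restriction maps along face inclusions $\op{Fun}^{\cong}(\Delta^m,\Ca)\to\op{Fun}^{\cong}(\Delta^n,\Ca)$ are Kan fibrations precisely because $\Ca$ is an $\infty$-category; one then uses $\alpha$ to push the levelwise weak contractibility of $\N$ into this fibrant diagram and extends $f'$ over $K$ simplex by simplex only up to coherent equivalence, correcting with equivalences in $\Ca$ at each stage. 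That correction-up-to-homotopy mechanism is the missing idea, and it is not expressible within the strict-lifting framework you set up.
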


\subsection*{Road map for proving \cref{thmA} and \cref{thmB}.}\label{roadmap}

The overall arching goal is to use \cref{maintechnicalsimplicesthm} for proving both of these theorems. We use the notations from the theorem to explain the brief sketch.

\begin{itemize}
    \item (\textbf{Weakly contractibility:}) In our cases $K$ and $K'$ will be the simplicial sets of the form  $\dd^*_2\Ca_{\E_1,\E_2}$. To any $n$-simplex $\tau$ in the target $K'$, we try to associate a simplicial set $\N(\tau)$ which relates to the target. \\
   In the case of \cref{thmA}, we define using the \textit{$\infty$-category of compactifications} (\cref{inftycatofcompactificationsdef}) which encodes the way of decomposing a $n$-simplex into specific directions. We show that this $\infty$-category is weakly contractible (\cref{kptweaklycontractible}).\\
   In the case of \cref{thmB}, we define as $\N(\tau)$ using the notion of  \textit{$\infty$-category of cartesianizations} (\cref{inftycatofcartesianizationsdef}). This encodes the way of decomposing commutative squares into cartesian squares. It follows that such a category is a weakly contractible Kan complex(\cref{KrtcontKancomplex}). 
   \item (\textbf{Construction of $\alpha$:}) This is really involved in both of the theorems. In both of these cases, the morphism $\alpha$ uses the map $f'$. Secondly, it uses some inner andoyne properties of specific combinatorial simplicial sets.

   In the case of \cref{thmA}, the key morphism is \cref{alphamapforpcom}. The inner anodyne morphism property in this section is proved in \cref{bxncptinneranodyne}.\\
   In thee case of \cref{thmB}, the simplicial sets get more technical. The construction of $\alpha$ follows from construction of $\epsilon_n$ (\cref{epsilonconstruction}) and \cref{truncatedCartalphaconstruction} . Also the analog inner anodyne property is proved in \cref{cartninneranodyne}.

   \item(\textbf{Compatibility with $f'$:}) In both of these theorems, the construction of $\alpha$ really uses the existence of the map $f'$. This enables us to prove the compatibility condition of \cref{maintechnicalsimplicesthm}.
\end{itemize}

\section{Extension along $p_{\op{comm}}$}
In this section, we try to solve the lifting problem of the following form :
\begin{equation}
\begin{tikzcd}
    \dd^*_2\Ca_{\E_1,\E_2} \arrow[d,"p_{\op{comm}}",swap] \arrow[r,"g_{\op{comm}}"] &\D\\
    \Ca \arrow[ur,"g'_{\op{comm}}",swap,dotted] & {}.
    \end{tikzcd}
\end{equation}

As both $\dd^*_2\Ca_{\E_1,\E_2}$ and $\Ca$ have same objects, we know what $g'_{\op{comm}}$ is on the level of objects. Let us try to analyze how to define $g'_{\op{comm}}$ for morphisms. 
\begin{itemize}
    \item Let $f : x \to y$ be a morphism in $\Ca$. Then by conditions in \cref{thmA}, we know that $f$ admits a decomposition of the form :
     \begin{equation}
          \begin{tikzcd}
              {} & y \arrow[dr,"p"] & {}\\
              x \arrow[ur,"q"] \arrow[rr,"f"] && z
              \end{tikzcd}
              \end{equation}
              where $p \in \E_1$ and $q \in \E_2$.
    \item Consider the following diagram :
    \begin{equation}
        \begin{tikzcd}
            x \arrow[r,"q"] \arrow[d,"\op{id}"] & y \arrow[d,"\op{id}"] & {} \\
            x \arrow[r,"q"]  & y \arrow[d,"p"] \arrow[r,"\op{id}"] & y \arrow[d,"p"] \\
            {} & z \arrow[r,"\op{io}"] & z. 
        \end{tikzcd}
    \end{equation}
    This diagram defines a morphism :
    \begin{equation}
        \Lambda^2_1 \to \dd^*_2\Ca_{\E_1,\E_2} 
    \end{equation}
    Notice that this square does not necessarily fill to a $2$-simplex hinting that $\dd^*_2\Ca_{\E_1,\E_2}$ is not an $\infty$-category.
    \item Composing it with $g_{\op{comm}}$, we have the solution of the following lifting problem: 
    \begin{equation}
        \begin{tikzcd}
         \Lambda^2_1 \arrow[r,"h'"] \arrow[d,hookrightarrow] & \D \\
         \Delta^2 \arrow[ur,dotted,"h",swap] & {}
        \end{tikzcd}
    \end{equation}
    We "define" \[g'_{\op{comm}}(f) = h(\{0 \to 2 \})\] 
\end{itemize}
Note that such a construction above depends on the chosen decomposition of $f$ hence the " " sign on the defintion. We now list the major ideas in proving \cref{thmA} :
\begin{enumerate}
    \item The key simplicial set that we need to encode all such possible decompositions for higher simplicies is the simplicial set |$\Copt^n$. For $n=1$, the simplicial set is $\Delta^2$. We shall also introduce a subsimplicial set $\bx^n \subset \Copt^n$ which shall encodes the directions of these decompositions in $\E_1$ and $\E_2$ respectively. For $n=1$ $\bx^1$ turns out to be the inner horn $\Lambda^2_1$. Using technical simpicial arguments of partially ordered sets, we shall show that $\bx^n \to \Copt^n$ is an inner anodyne. This plays a key role in constructing the map $\alpha$ in \cref{maintechnicalsimplicesthm}.
    \item Given any $n$-simplex of $\Ca$, we shall define the \textit{$\infty$-category of compactifications of $\tau$} (denoted by $\Kpt(\tau)$. This shall encode all various ways of decomposiing an $n$-simplex of $\tau$ in directions of $\E_1$ and $\E_2$. We shall show that the collection of such decompositions is weakly contractible (\cref{kptweaklycontractible}).
    \item In order to construct the morphism $\alpha$,  we shall define the morphism : 
    \[\alpha_{\op{comm}} : \Kpt^n \to \op{Fun}(\bx^n,\dd^*_2\Ca_{\E_1,\E_2}) \]
    which is a simplicial way of encoding the procedure in $n=1$ case mentioned above.
    \item Combining all the above points, we shall conclude the proof of \cref{thmA} verifying the conditions of \cref{maintechnicalsimplicesthm}.
\end{enumerate}

\subsection{The $\infty$-category of compactifications.}
In this subsection, we define two important simplicial sets: simplicial set of compactifications and cartesianizations. The simplicial set of compactifications is an important tool for showing that the $p_{\op{com}}$ is a categorical equivalence. The simplicial set of cartesianizations is need for showing that the $p_{\op{cart}}$ is a categorical equivalence. The definitions of both of these are motivated from the ideas of proving the theorem. \\
	We shall define specific simplicial sets which shall play a key role in defining these objects (see \cite[Section 4]{Gluerestnerv} and \cite[Section 5]{Gluerestnerv} are main references for the notations). \\
	
	\begin{definition}
	 Let $\Kpt^n$ be the sub-bisimplicial set of the bisimplicial set $\Delta^{n,n}$ spanned by vertices $(i,j)$ where $1 \le i \le j \le n$. 
	\end{definition}
	\begin{definition}
		Let $\Cpt^n \subset [n] \times [n]$ be the category spanned by objects $(i,j), 1 \le i \le j \le n$. Denote $\Copt^n:= N(\Cpt^n)$. 
	\end{definition}
	
	\begin{notation}
		Denote $\bx^n:=\dd^*_2 \Kpt^n$.  Also for a partially ordered set $P$ with ordering $\le$ and two elements $x,y \in P$, we denote:
		\begin{enumerate}
			\item $P_{x/}$ to be the undercategory of $x$. 
			\item $P_{/x}$ to be the overcategory of $x$. 
			\item $P_{x//y}$ to be the category spanned by objects $z \in P$ where $x \le z \le y$. It is empty if $x  > y $.
		\end{enumerate}
	\end{notation}
	
	\begin{remark}
		Some remarks on the definitions above. 
		\begin{enumerate}
  \item The simplicial set $\bx^n$ also admits the following description : 
  \begin{equation}
      \bx^n  \cong \bigcup_{i=0}^n N(\Cpt^n_{(0,i)//(i,n)}).
  \end{equation}
			\item Diagram of $\Copt^1$ is as follows:
			\begin{center}
				\begin{tikzcd}
					a_{00} \arrow[r] & a_{01} \arrow[d]\\
					{} & a_{11}
				\end{tikzcd}
			\end{center}
			Thus $\Copt^1 \cong \Delta^2$. Here $a_{ij}$ is the vertex $(i,j)$ in $[n] \times [n]$.
			\item We have a natural inclusion $\bx^n \subset \Copt^n$. 
			\item The Hasse diagram of $\bx^1$ is as follows: 
			\[ \bx^1:= \begin{tikzcd}
				a_{00} \arrow[r] & a_{01}
			\end{tikzcd} 
			\cup \begin{tikzcd}
				a_{01} \arrow[d] \\
				a_{11}
			\end{tikzcd} \]
			Thus $\bx^1 \cong \Lambda^2_1$. 
			\item The Hasse diagram of $\Copt^2$ is as follows:
			
			\begin{center}
				\begin{tikzcd}
					a_{00} \arrow[r] & a_{01} \arrow[r] \arrow[d] & a_{02} \arrow[d] \\
					{} & a_{11} \arrow[r] & a_{12} \arrow[d] \\
					{} & {} & a_{22}
				\end{tikzcd}
			\end{center}
			Thus $\Copt^2 \cong \Delta^4 \coprod_{\Delta^1} \Delta^4$. 
			\item The Hasse diagram of $\bx^2$ is as follows:
			
			\[\bx^2 \cong \begin{tikzcd}
				a_{00} \arrow[r] & a_{01} \arrow[r] & a_{02}
			\end{tikzcd} \cup \begin{tikzcd}
				a_{01} \arrow[r] \arrow[d] & a_{02} \arrow[d]\\a_{11} \arrow[r] & a_{12}
			\end{tikzcd} \cup \begin{tikzcd}
				a_{02} \arrow[d] \\
				a_{12} \arrow[d] \\
				a_{22}
			\end{tikzcd}\]
		
		\end{enumerate}
	\end{remark}
	\begin{proposition}\label{bxncptinneranodyne}
	
		The inclusion $\bx^n \subset \Copt^n$ is inner anodyne. 
	
	\end{proposition}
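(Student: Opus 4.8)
The plan is to realise the inclusion $\bx^n \subset \Copt^n$ as a transfinite composite of pushouts of inner horns $\Lambda^m_p \hookrightarrow \Delta^m$ ($0 < p < m$), organised by a discrete‑Morse‑type acyclic matching on the nondegenerate simplices. First I would set up the bookkeeping. A nondegenerate simplex of $\Copt^n$ is a chain $(i_0,j_0) < \cdots < (i_k,j_k)$ in $\Cpt^n$, and by the description $\bx^n = \bigcup_i N(\Cpt^n_{(0,i)//(i,n)})$ recorded above, such a chain lies in $\bx^n$ precisely when $i_k \le j_0$, i.e. when it is contained in one block $\{0,\dots,i\}\times\{i,\dots,n\}$. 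Call a chain \emph{bad} when $i_k > j_0$; these are exactly the simplices of $\Copt^n \setminus \bx^n$. The decisive elementary observation is that deleting an \emph{interior} vertex of a chain changes neither its top nor its bottom vertex, so every inner face of a bad chain is again bad, while the good chains ($i_k \le j_0$) form the subcomplex $\bx^n$. This is exactly the configuration that forces inner, rather than outer, horn fillings.

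Next I would define the matching. Given a bad chain $C$ with minimal second coordinate $j_0$, let $a$ be the largest first coordinate occurring among the vertices of $C$ in the bottom row $\{j = j_0\}$, and set $u(C) := (a, j_0+1)$. One checks that $u(C)$ is a valid element of $\Cpt^n$ (since $a \le j_0 < j_0+1 \le n$), that it is comparable to every vertex of $C$, and that $v_0 < u(C) < v_k$ strictly; hence inserting or deleting $u(C)$ is always an interior operation yielding another bad chain, and the invariants $j_0$ and $a$ are unchanged by it. Toggling $u(C)$ (insert it if absent, delete it if present) is therefore a fixed‑point‑free involution pairing each bad chain of dimension $k$ with a bad chain of dimension $k\pm 1$ along an inner face; this is the desired perfect matching. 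For $n=1$ the only bad simplex is the edge $(0,0)<(1,1)$, whose pivot is $(0,1)$, and the matching recovers the filling $\Lambda^2_1 \hookrightarrow \Delta^2$.

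Finally I would produce the filtration: order the matched pairs first by the dimension of their lower member, and within a fixed dimension by decreasing value of the lexicographic key $(j_0, -a)$, attaching each pair $(\tau,\tau^+)$ by filling $\Lambda^m_p \hookrightarrow \Delta^m$, where $m=\dim\tau^+$ and $p$ is the (interior) position of $u(\tau^+)$. The content of the argument — and the step I expect to be the main obstacle — is checking that when a pair is attached, all remaining faces $d_q\tau^+$ ($q\ne p$) are already present. Faces that still contain their associated pivot are upper members of pairs of strictly lower dimension, hence attached earlier; the only faces that can create same‑dimension dependencies are those deleting a bottom‑row vertex, and a short analysis shows any such dependency strictly raises the key $(j_0,-a)$ — deleting the unique bottom vertex increases $j_0$, while deleting the extreme bottom‑row vertex decreases $a$. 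This forces acyclicity and makes the ordering well defined; these are precisely the poset manipulations to be assembled in Appendix A. Granting acyclicity, $\bx^n \subset \Copt^n$ is a composite of pushouts of inner horns and is therefore inner anodyne.
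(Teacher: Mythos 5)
Your proof is correct --- the pivot $(a,j_0+1)$ is always an element of $\Cpt^n$ interior to any bad chain, toggling it is a fixed-point-free involution preserving $(j_0,a)$ and badness, and your dependency analysis (only bottom-row deletions create same-dimension dependencies, and these strictly raise the key $(j_0,-a)$) is complete, so the ordering by dimension and then decreasing key is well founded --- but it follows a genuinely different route from the paper's. The paper disposes of the proposition in one line by specializing a general poset lemma: taking $P=\Cpt^n$, $p_j=(0,j-1)$, $q_j=(j-1,n)$ (the printed $q_j$'s in the paper contain a typo), the inclusion $\bx^n=\bigcup_j N(P_{p_j//q_j})\subset N\bigl(\bigcup_j P_{p_j//q_j}\bigr)=\Copt^n$ is inner anodyne by \cref{partiallyorderedmainprop}; that lemma is in turn reduced to \cref{inneranodynepartiallyorderedsetcond}, which treats one poset pushout $S=Q\coprod_P R$ along an up-set $P\subset Q$ at a time and is proved by induction on $|Q-P|$ using cofinality (\cite[Theorem 4.1.3.1]{HTT}) and the join lemma (\cite[Lemma 2.1.2.3]{HTT}). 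So where you attach one inner horn per matched pair of bad chains, the paper attaches one block $N(P_{p_j//q_j})$ at a time and certifies each block-attachment abstractly rather than cell by cell. What each approach buys: your discrete-Morse argument is more elementary and self-contained (no cofinality theory, no left/right anodyne join arguments) and it exhibits an explicit filtration of $\Copt^n$ by pushouts of inner horns $\Lambda^m_p\hookrightarrow\Delta^m$ with $0<p<m$; on the other hand it is bespoke to the pair $(\bx^n,\Copt^n)$, whereas the paper's appendix lemmas are reused several more times --- in the proof of \cref{kptweaklycontractible} (for $\Copt^{n-1}\coprod_{\Delta^{n-1}}\Delta^n\hookrightarrow\Copt^n_0$ and for the maps $\beta_j$) and in \cref{cartninneranodyne} (for $\boxplus^n_{\op{cart}}\hookrightarrow\Cart^n$) --- so replacing the appendix wholesale would require rerunning your matching, with new pivots, in each of those settings. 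As a proof of this one proposition, though, your argument is complete and arguably more transparent.
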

 \begin{proof}
    Let $P = \Cpt^n$ and let $p_1 = (0,0),... p_{n+1}= (0,n)$ and $q_1 = (n,0),\cdots q_{n+1}= (n,n)$. Then applying \cref{partiallyorderedmainprop}, we get that $\bx^n \subset \Copt^n$ is an inner anodyne.
 \end{proof}
The category $\op{Cpt}^n$ admits a nice stratification as described below:
 \begin{notation}
     For every $0 \le j \le n$ and for $n \ge 1$, let $\op{Cpt}^n_0 = \op{Cpt}^{n-1} \cup \{(n,0)\}$ and by induction we let $\op{Cpt}^{n-1}_j:= \op{Cpt}^{n-1}_{j-1} \cup \{(n,n-j)\}$. Let $\Copt^n_j$ be the corresponding nerves of the partially ordered sets. In particular, we have a sequence of simplicial sets:
     \begin{equation}
         \Copt^{n-1} \subset \Copt^n_0 \subset \Copt^n_1 \cdots \Copt^n_{n-1} \subset \Copt^n_n= \Copt^n.
     \end{equation}
 \end{notation}
	We now define the simplicial set of compactifications.
	
	\begin{definition}\label{inftycatofcompactificationsdef}
	Let $\Ca$ be an $\infty$-category with $\E_1,\E_2$	be a pair of admissible edges in $\Ca$. Let $\tau: \Delta^n \to \Ca$, then the \textit{ $\infty$-category of compactifications of $\tau$}  is a  subcategory of $\op{Fun}(\Cpt^n,\Ca)$ spanned by objects $\tau':\Cpt^n \to \Ca$ such that :
 \begin{enumerate}
     \item $\tau'|_{\Delta^n} =\tau$.
     \item $\tau'$ sends arrows $(i,j) \to(i,j+1)$ to an edge in $\E_2$.
     \item $\tau'$ sends arrows $(i,j) \to (i+1,j)$ to an edge in $\E_1$.
      \item Morphism between two objects are give by natural transformations which pointwise gives an edge in $\E_1$.
    \end{enumerate}
    The $\infty$-category shall be denoted as $\Kpt(\tau)$
	
	\end{definition}

The following proposition says that the collection of compactifications is weakly contractible. This plays a key important role in proving Theorem A.
	\begin{proposition}\label{kptweaklycontractible}
	Let $\Ca$ be an $\infty$-category with  $\E_1,\E_2$ a pair of edges satisfying the following conditions :
      \begin{enumerate}
      \item For every morphism in $f \in \Ca$, there exists a $2$-simplex in $\Ca$ of the form :
      \begin{equation}
          \begin{tikzcd}
              {} & y \arrow[dr,"p"] & {}\\
              x \arrow[ur,"q"] \arrow[rr,"f"] && z
              \end{tikzcd}
              \end{equation}
              where $p \in \E_1$ and $q \in \E_2$.
\item The edges $\E_1$ and $\E_2$ are admissible.
 \end{enumerate}	
  Then the $\infty$-category $\Kpt(\tau)$ is weakly contractible. 
	\end{proposition}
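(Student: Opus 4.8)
The plan is to prove that $\Kpt(\tau)$ is weakly contractible by showing that it is a \emph{cofiltered} $\infty$-category: since the opposite of a cofiltered $\infty$-category is filtered, and filtered $\infty$-categories are weakly contractible by \cite{HTT} (weak contractibility being invariant under passage to the opposite), this is enough. By the standard criterion (\cite{HTT}) it suffices to verify that $\Kpt(\tau)$ is nonempty and that every diagram $K\to\Kpt(\tau)$ indexed by a finite simplicial set extends to a cone $K^{\triangleleft}\to\Kpt(\tau)$. The geometric input comes entirely from the two hypotheses: hypothesis (1), the existence of an $\E_2$-then-$\E_1$ factorization of every morphism, produces objects and repairs candidate cone apices so that they again lie in $\Kpt(\tau)$, while hypothesis (2), admissibility, guarantees that the edges of the resulting grids remain in the prescribed classes.

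For \textbf{nonemptiness}, I would construct a single grid $\tau'\colon N(\Cpt^n)\to\Ca$ extending $\tau$ along the diagonal $(k,k)\mapsto\tau(k)$, proceeding by induction on $n$ along the stratification $\Copt^{n-1}\subset\Copt^n_0\subset\cdots\subset\Copt^n_n=\Copt^n$. At each stage one adjoins a single new outer vertex, which amounts to choosing an $\E_2$-morphism followed by an $\E_1$-morphism (exactly hypothesis (1)) and filling the resulting commuting cell using that $\Ca$ is an $\infty$-category; admissibility keeps the horizontal edges in $\E_2$ and the vertical edges in $\E_1$. The base case is immediate, $\Copt^1\cong\Delta^2$ recording precisely a factorization $x\xrightarrow{q}\bar y\xrightarrow{p}y$ with $q\in\E_2$, $p\in\E_1$.

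For \textbf{cones}, given a finite diagram of compactification grids I would form a candidate apex by taking the limit of the diagram computed \emph{relative to the fixed diagonal} $\tau$, i.e.\ in the appropriate slice of $\op{Fun}(\Cpt^n,\Ca)$ lying over $\tau$; existence of these limits is exactly what the overcategory results of Appendix B supply. Taking the limit relative to $\tau$ keeps the diagonal fixed, so the apex again restricts to $\tau$, and stability of $\E_1,\E_2$ under pullback shows that the comparison legs are pointwise in $\E_1$ — hence genuine morphisms of $\Kpt(\tau)$ — while the internal edges stay in $\E_2,\E_1$. Where the naive limit apex fails to have its diagonal-adjacent edge in $\E_2$, I would re-factor the relevant composite using hypothesis (1), replacing the apex by the intermediate object of an $\E_2/\E_1$-factorization and using admissibility (in particular the cancellation axiom) to see that the comparison legs remain in $\E_1$. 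This yields the required cone $K^{\triangleleft}\to\Kpt(\tau)$, establishing cofilteredness.

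The \textbf{main obstacle} is the coherent $\infty$-categorical execution of the cone step. One must produce not merely an apex object but an honest cone diagram $K^{\triangleleft}\to\Kpt(\tau)\subseteq\op{Fun}(\Cpt^n,\Ca)$, and simultaneously verify, at every vertex and edge of the staircase $\Cpt^n$, that the limit computed in the overcategory lands in the admissible classes $\E_1$ and $\E_2$. Assembling the vertexwise factorizations of hypothesis (1) into a functor on all of $N(\Cpt^n)$ uniformly in the finite index $K$, and checking membership in $\E_1,\E_2$ compatibly, is precisely where the bookkeeping of Appendix B (existence of limits in overcategories) and of admissibility must be combined with care; by comparison the nonemptiness step and the reduction of cofilteredness to the existence of such cones are formal.
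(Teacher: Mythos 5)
Your overall strategy is the same as the paper's: reduce weak contractibility to cofilteredness (the lifting problems $\partial\Delta^m\hookrightarrow\Lambda^{m+1}_0=(\partial\Delta^m)^{\triangleleft}$), build a candidate cone apex from a limit taken in an overcategory, repair the apex using the factorization hypothesis (1), and use admissibility to keep edges in the prescribed classes. Indeed, for a $1$-simplex $\tau\colon x\to y$ your sketch reproduces the paper's \cref{Kpt1weakcontractible} essentially verbatim: the diagram of middle objects is viewed in $\Ca_{/y}$, its limit exists by \cref{finitelimitsinovercategories}, the resulting edge out of $x$ is re-factored via hypothesis (1), and cancellation keeps the comparison legs in $\E_1$.

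The genuine gap is the cone step for $n\ge 2$, which is precisely what you flag as ``the main obstacle'' and then leave unresolved --- but it is the bulk of the paper's proof, not bookkeeping. Two concrete problems. First, the limits you need do not live in any overcategory $\Ca_{/c}$: since the diagonal must stay fixed at $\tau$, they must be produced relative to the restriction functor $\op{Fun}(\Cpt^n,\Ca)\to\op{Fun}(\Delta^n,\Ca)$, i.e.\ in its fiber over $\tau$, and \cref{finitelimitsinovercategories} says nothing about such fibers; so the claim that Appendix B ``exactly'' supplies existence of these limits is unjustified. Second, for $n\ge 2$ the repair is not a local fix: re-factoring the diagonal-adjacent edge at one entry of the staircase $\Cpt^n$ forces changes at all entries below and to the right of it, and these re-factorizations must be chosen compatibly over the whole grid and coherently across the finite diagram $\partial\Delta^m$ simultaneously. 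The paper resolves exactly these two issues by a double induction that your outline lacks: an outer induction on $n$ along the stratification $\Copt^{n-1}\subset\Copt^n_0\subset\cdots\subset\Copt^n_n=\Copt^n$, which adjoins one vertex at a time, combined at each stage with the inner-anodyne extension results of Appendix A (\cref{inneranodynepartiallyorderedsetcond}, fed into \cite[Corollary 2.3.2.4]{HTT} and \cite[Lemma 2.1.2.3]{HTT}). This reduces each stage to producing a single pullback (a limit over $\Lambda^2_2$ in an overcategory, where Appendix B genuinely applies) together with one application of the $n=1$ lemma, while all remaining coherence is filled automatically by anodyne extensions. Without this mechanism, or a substitute for it, your proposal does not yet constitute a proof for $n\ge 2$.
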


 The proof for general $n$-simplex $\tau$, one needs to show that for when $\tau$ is a $1$-simplex, the following proposition holds.

 \begin{lemma}\label{Kpt1weakcontractible}
     Let $\tau$ be an edge in $\Ca$. Then the $\infty$-category $\op{Kpt}(\tau)$ is cofiltered hence weakly contractible.
 \end{lemma}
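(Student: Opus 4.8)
The plan is to unwind the definition of $\Kpt(\tau)$ when $\tau\colon x\to z$ is a single edge, and then check the cofiltered condition directly; weak contractibility is then automatic (\cite[Lemma 5.3.1.18]{HTT}, applied to the opposite, since cofiltered $\infty$-categories are weakly contractible). Since $\Cpt^1$ is the chain $(0,0)\le(0,1)\le(1,1)$, an object of $\Kpt(\tau)$ is a $2$-simplex of $\Ca$ exhibiting a factorization $\tau\simeq p\circ q$ with $q\colon x\to y$ in $\E_2$ and $p\colon y\to z$ in $\E_1$; a morphism is a natural transformation which is the identity on the diagonal and lands in $\E_1$ at the middle vertex $(0,1)$, i.e. an edge $g\colon y\to y'$ in $\E_1$ with $g\circ q\simeq q'$ and $p'\circ g\simeq p$. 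Thus $\Kpt(\tau)$ is the $\infty$-categorical category of compactifications of the single edge $\tau$.

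To prove cofilteredness I would use the criterion that every map $\phi\colon K\to\Kpt(\tau)$ out of a finite simplicial set $K$ extends to the left cone $K^{\triangleleft}$ (\cite[Prop. 5.3.1.13]{HTT}, dualized). Given $\phi$, restricting to the vertex $(0,1)$ yields a finite diagram $\Psi\colon K\to\Ca$ of the middle objects $y_v$ with edges in $\E_1$, together with compatible structure maps $p_v\colon y_v\to z$ in $\E_1$ and $q_v\colon x\to y_v$ in $\E_2$ satisfying $p_v\circ q_v\simeq\tau$. The cone point is built in two steps: first, form the finite limit $W:=\lim_K\Psi$ in the slice $\Ca_{/z}$, which exists because it reduces to iterated fibre products of morphisms in $\E_1$ whose projections remain in $\E_1$ by pullback-stability; each projection $\pi_v\colon W\to y_v$ then lies in $\E_1$, and the compatible family $\{q_v\}$ induces a map $a\colon x\to W$ over $z$. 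Second, invoke the factorization hypothesis (condition~(1) of \cref{kptweaklycontractible}) to write $a\simeq s\circ q_\ast$ with $q_\ast\colon x\to y_\ast$ in $\E_2$ and $s\colon y_\ast\to W$ in $\E_1$.

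I would then set $p_\ast:=(W\to z)\circ s$ and define the cone legs $g_{\ast v}:=\pi_v\circ s\colon y_\ast\to y_v$, and verify that this data is a cone in $\Kpt(\tau)$. Here $q_\ast\in\E_2$ holds by construction; the structure map $W\to z$ lies in $\E_1$ as the composite $p_v\circ\pi_v$ of maps in $\E_1$, so $p_\ast\in\E_1$; and each leg $g_{\ast v}$ lies in $\E_1$ by the cancellation axiom~(3) applied to $p_v\circ g_{\ast v}\simeq p_\ast$. The triangles $g_{\ast v}\circ q_\ast\simeq q_v$, $p_v\circ g_{\ast v}\simeq p_\ast$ and $p_\ast\circ q_\ast\simeq\tau$ hold by the universal property of $W$ and the factorization, so $\phi$ extends to $K^{\triangleleft}$. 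Specializing to $K=\emptyset$ recovers nonemptiness from condition~(1), and to $K=\Delta^0\sqcup\Delta^0$ recovers the classical construction that dominates two compactifications by compactifying $x\to y_1\times_z y_2$.

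The main obstacle is coherence rather than the combinatorics of edges: the limit $W$, the induced map $a\colon x\to W$, and its factorization are only determined up to coherent homotopy, so the delicate point is to assemble the legs $g_{\ast v}$ into an honest map $K^{\triangleleft}\to\Kpt(\tau)$ (a coherent cone) rather than a mere objectwise family. I expect this to be handled by performing the whole construction inside the double slice $\Ca_{x//z}$ of objects equipped with a map from $x$ and a map to $z$, reading off the cone from the universal property of limits there, so that the $\E_1$- and $\E_2$-conditions carve out a codirected sub-$\infty$-category and the element-level checks above become the shadow of the genuinely $\infty$-categorical statement. A secondary point worth recording is that, beyond the three admissibility axioms, the argument uses that $\E_1$ is closed under composition in order to keep $W\to z$ and hence $p_\ast$ in $\E_1$.
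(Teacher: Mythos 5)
Your proposal is correct and is essentially the paper's own argument: the paper likewise verifies cofilteredness by extending finite diagrams (there $\partial\Delta^m$) over the left cone $\Lambda^{m+1}_0=(\partial\Delta^m)^{\triangleleft}$, forms the limit of the middle objects in the slice over the target (using admissibility of $\E_1$ together with \cref{finitelimitsinovercategories}), factors the induced map out of $x$ using hypothesis (1), and puts the cone legs in $\E_1$ by right-cancellation, exactly as you do. The coherence step you defer to the double slice is precisely how the paper finishes: it encodes the whole diagram as $h_m\colon \Delta^0 \boldsymbol{*}\, \partial\Delta^m \to \Ca_{/y}$, extends it through the limit cone by the universal property, and glues in the factorization $2$-simplex along the right-anodyne inclusion $\Delta^{\{0,2\}}\hookrightarrow \Delta^2$ using \cite[Lemma 2.1.2.3]{HTT} (and, like your proof, it implicitly uses that $\E_1$ is closed under composition).
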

 \begin{remark}
     \textbf{Evidence of the lemma:} In order to verify the cofiltered condition, we need to show for the very basic case that given any two compactifications of a morphism $f : x \to y$ given by $\tau_1 : x \to z \to y$ and $ \tau_2 : x \to z' \to y$ , we need to find a third compactification $ \tau_3 : x \to z'' \to y$ and morphisms $\tau_3 \to \tau_2$ and $\tau_3 \to \tau_1$ respectively. Let us explain how this goes : 
     \begin{itemize}
         \item Consider the diagram : 
         \begin{equation}
             \begin{tikzcd}
                 x \arrow[r] z \arrow[d] & z \arrow[d] \\
                 z' \arrow[r] & y
             \end{tikzcd}
         \end{equation}
         \item As $\Ca_{\E_1}$ admits pullbacks, decompose the diagram into the following : 
         \begin{equation}
             \begin{tikzcd}
                 x \arrow[dr,"f'"] & {} & {} \\
                 {} &  x' \arrow[r] \arrow[d] & z \arrow[d] \\
                 {} & z' \arrow[r] & y
             \end{tikzcd}
         \end{equation}
         where the inner square is a pullback square and the edges of this square are in $\E_1$.
         \item Note that the morphism $f'$ may not be in $\E_2$, but we can decompose $f'$ into a composition $x \to z'' \to x'$ where $x \to z'$ is in $\E_2$ and $z \to x'$ is in $\E_1$.
         \item Adding this decomposition, we get the following diagram :
         \begin{equation}
             \begin{tikzcd}
                 x \arrow[dr] \arrow[ddr] \arrow[drr] & {} & {} \\
                 {} & z'' \arrow[r] \arrow[d] & z \arrow[d] \\
                 {} & z' \arrow[r] & y
             \end{tikzcd}
            \end{equation}
            where the inner square consists of all edges in $\E_1$. 
            \item Thus we have defined a third compactification $\tau_3 : x \to z'' \to y$ and two mmorphism of compactifications : $\tau_3 \to \tau_1$ and$\tau_3 \to \tau_2$
     \end{itemize}
     The general case of cofiltered condition follows the similar idea but generalized so it works for general $n$-simplices. 
 \end{remark}
 \begin{proof}[Proof of \cref{Kpt1weakcontractible}]
     Note that $\Cpt^1 \cong \Delta^2$. For $m \ge 1$, we need to show the solution of the lifting problem :
     \begin{equation}
         \begin{tikzcd}
             \partial\Delta^m \arrow[r,"g_m"] \arrow[d, hookrightarrow] & \op{Kpt}(\tau) \\
             \Lambda^{m+1}_0 \arrow[ur,dotted,"g_m'"] & {}
         \end{tikzcd}
         \end{equation}
         Unravelling the definition, we need to solve the following lifting problem :
         \begin{equation}
             \begin{tikzcd}
                 \partial\Delta^m \times \Delta^2 \arrow[r,"g_m"] \arrow[d,hookrightarrow] & \Ca \\
                 \Lambda^{m+1}_0 \times \Delta^2 \arrow[ur,dotted,"g_m'"] & {}
             \end{tikzcd}
      \end{equation}
      where $g_m'$ can be realized as a morphism $\Lambda^{m+1}_0 \to \op{Kpt}(\tau)$. This lifting problem reduces to construct a new object of $\op{Kpt}(\tau)$ which shall be the image of the initial vertex of $\Lambda^{m+1}_0$  with a coherent filling of other subsimplices to get the desired map $g_m'$.\\ 
      
    Let $\tau : x \to y$ be the edge in $\Ca$.   Note that the map $g_m$  when restricted to every vertex of $\partial \Delta^m$ maps the vertex $[0]$ and $[2]$ to $x$ and $y$ respectively. As $\E_2$ is an admissible set of edges, we can consider the morphism $g_m$ valued in the overcategory $\Ca_{/y}$ :
    \begin{equation}
        h_m : \Delta^0 \boldsymbol{* }\partial\Delta^m \to \Ca_{/y}
    \end{equation}
    According to the conditions in the proposition, we have that $\Ca_{\E_2}$ admits pullbacks and $\Ca_{\E_2} \to \Ca$ preserves pullbacks. By \cref{finitelimitsinovercategories}, we see that $\Ca_{\E_2,/y}$ admits finite limits and the morphism $\Ca_{\E_2,/y} \to \Ca_{/y}$ preserves finite limits. Thus the morphism $h_m|_{\partial\Delta^m}$ admits a limit in $\Ca_{/y}$. Thus we get a morphism
    \begin{equation}
        h_m' : \partial\Delta^1\boldsymbol{*} \partial\Delta^m \to \Ca_{/y}
    \end{equation}
    Let us denote $z'$ be the image of cone point along the morphism $h_m'$.
    By universal property of limit diagram, we get the morphism $h_m'$ extends to 
    \begin{equation}
        h_m'' : \Delta^1 \boldsymbol{*} \partial\Delta^m \to \Ca_{/y}
    \end{equation}
    The morphism $h_m''|_{[0]}$ amounts to defining an edge $x \to z'$ in $\Ca$. In particular we have defined a new $2$-simplex 
    \begin{equation}
        \begin{tikzcd}
            {} & z' \arrow[dr,"q'"] & {}\\
            x \arrow[ur,"p'"]  \arrow[rr,"\tau"]&& y
        \end{tikzcd}
        \end{equation}
        where $q \in \E_2$ but $p$ may not belong to $\E_1$.  In order to define $g_m'$, we need to get a simplex where $p' \in \E_1$. But we know that $p'$ admits a decomposition $\sigma:$
        \begin{equation}
            \begin{tikzcd}
                     {} & z \arrow[dr,"q''"] & {}\\
            x \arrow[ur,"p"]  \arrow[rr,"p'"]&& z'
            \end{tikzcd}
         \end{equation}
                 where $p \in \E_1, q'' \in E_2$.  The amalgamation of $h_m''$ amd $\sigma$ amounts to the existence of the morphism :
                 \begin{equation}
                     \widetilde{h_m} : \Delta^1 \boldsymbol{*} \partial\Delta^m \coprod_{\Delta^1 } \Delta^2 \to \Ca_{/y}
                 \end{equation}
                 where $\Delta^1 \hookrightarrow \Delta^2$ is given by edge $0\to 2$. This inclusion is right anodyne as it is composition of $\Delta^1 \hookrightarrow \Lambda^2_2 \hookrightarrow \Delta^2$ which are both right anodynes. Thus by \cite[Lemma 2.1.2.3]{HTT}, we see that $\widetilde{h_m}$ extends to a morphism :
                 \begin{equation}
                     g_m' : \Delta^2 \boldsymbol{*} \partial\Delta^m \cong \Delta^1 \boldsymbol{*}\Lambda^{m+1}_0 \to \Ca_{/y} \to \Ca
                  \end{equation}
                  where $g_m'|_{[0]}$ is the morphism $x \to z$ which is an edge in $\E_1$. Thus we can realize $g_m' : \Lambda^{m+1}_0 \to \Kpt(\tau)$ as the conditions of the edges are fulfilled. This gives us the desired extension and completing the proof.  

 \end{proof}
 
 Let us now proceed in proving the proposition.
 \begin{proof}[Proving \cref{kptweaklycontractible}]

 The goal is to show that $\Kpt(\tau)$ is cofiltered which implies weakly contractibility. In particular, we show that for all $m \ge 0$, we show that the lifting problem :
 \begin{equation}\label{filteredcompliftingproblem}
     \begin{tikzcd}
         \partial\Delta^m \arrow[r,"f_m"] \arrow[d, hookrightarrow] & \Kpt(\tau)\\
         \Lambda^{m+1}_{0} \arrow[ur,dotted,"f_m'"] & {}
          \end{tikzcd}
 \end{equation}
 admits a solution.  As $\op{Kpt}^n(\Ca)$ is a subcategory of $\op{Fun}(\Copt^n,\Ca)$, it boils down to lift a similar diagram :
 \begin{equation}
     \begin{tikzcd}
         \partial \Delta^m \times \Copt^n \arrow[r,"f_m"] \arrow[d,hookrightarrow] & \Ca \\
         \Lambda^{m+1}_0 \times \Copt^n \arrow[ur,"f_m'"] & {}
     \end{tikzcd}
 \end{equation}

 We proceed to prove this statement by induction on $n$.
 \begin{enumerate}
     \item \textbf{n=0:} In this case, there is nothing to prove. 
     \item \textbf{n=k-1 $\implies$ n=k:}.  We assume that we have the lifting property for $n=k-1$, as noticed in the notation above, we have the decomposition :
      \begin{equation}
         \Copt^{n-1} \subset \Copt^n_0 \subset \Copt^n_1 \cdots \Copt^n_{n-1} \subset \Copt^n_n= \Copt^n.
     \end{equation}. We show that we can lifting the problem levelwise on $\Copt^n_j$ by induction. In precise, we prove the following claim.
     \begin{claim}
     Let us assume that $f_m$ can be lifted to a map 
     \begin{equation}
         f_m^{j-1} : \Lambda^{m+1}_0 \to\op{Fun}^{\E_1,\E_2}(\Copt^n_{j-1},\Ca).
     \end{equation}
     where the latter category is the full subcategory  of $\op{Fun}(\Copt^n_{j-1},\Ca)$ spanned by functors which when restricted to $\Delta^n$ is the simplex $\tau$, sends horizontal arrows to $\E_1$ and vertical arrows to $\E_2$. Then we have a solution to the lifting problem :
     \begin{equation}\label{liftingproblemoftheclaim}
     \begin{tikzcd}
         \partial\Delta^m \times \Copt^n_j \coprod_{\partial\Delta^n \times \Copt^n_{j-1}} \Lambda^{m+1}_{0} \times \Copt^n_{j-1} \arrow[d,hookrightarrow] \arrow[r," \alpha_j"] & \Ca\\
         \Lambda^{m+1}_{0} \times \Copt^n_j \arrow[ur,"f^j_m",dotted,swap] & {}
         \end{tikzcd}
     \end{equation}
    such that arrow $f^j_m$ induces a morphism $\Lambda^{m+1}_0 \to \op{Fun}^{\E_1,\E_2}(\Copt^n_j,\Ca)$. Here $\alpha=(f_m|_{\Copt^n_j}, f_m^{j-1})$
     \end{claim}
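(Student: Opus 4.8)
The plan is to read the left-hand map of \eqref{liftingproblemoftheclaim} as the pushout--product $i_1 \,\square\, i_2$ of the two inclusions
\[ i_1 : \partial\Delta^m \hookrightarrow \Lambda^{m+1}_0 , \qquad i_2 : \Copt^n_{j-1} \hookrightarrow \Copt^n_j , \]
so that producing $f^j_m$ is the same as extending $\alpha_j=(f_m|_{\Copt^n_j}, f^{j-1}_m)$ along $i_1\,\square\, i_2$ into $\Ca$. First I would describe $i_2$ combinatorially. Correcting the evident typographical slips in the stratification notation, the passage from $\Cpt^n_{j-1}$ to $\Cpt^n_j$ adjoins the single vertex $v_j=(n-j,n)$; its strict predecessors inside $\Cpt^n_{j-1}$ form a subposet $L$ of $\Cpt^{n-1}$, while its strict successors are exactly the already-adjoined last-column vertices $(n-j+1,n)<\cdots<(n,n)$, a chain whose nerve is $\Delta^{j-1}$. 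Hence, by the standard pushout description of adjoining one element to a poset, $i_2$ is a pushout of the join inclusion
\[ N(L)\star \Delta^{j-1} \;\hookrightarrow\; N(L)\star\Delta^0\star\Delta^{j-1} , \]
which inserts $v_j$ as the new middle vertex.

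Next I would exploit that $\Lambda^{m+1}_0\cong\Delta^0\star\partial\Delta^m$ (as already used in \cref{Kpt1weakcontractible}), so that $i_1$ is the inclusion of the base into a left cone; it is right anodyne but \emph{not} inner anodyne, and therefore the extension cannot be obtained merely by filling inner horns against the $\infty$-category $\Ca$. Instead the argument should mirror \cref{Kpt1weakcontractible}: the initial (cone) vertex of $\Lambda^{m+1}_0$ encodes the new cone-compactification being manufactured, and extending $\alpha_j$ to the vertex $v_j$ is precisely the problem of choosing a value there together with a compatible cone. Because $\E_1$ and $\E_2$ are admissible and the upper link is the simplex $\Delta^{j-1}$ (so the relevant slice is taken over a single object $w$, the image of $(n-j+1,n)$), the pertinent overcategory admits finite limits and the forgetful functor preserves them by \cref{finitelimitsinovercategories}; taking this limit produces the value of the new compactification at $v_j$ and, by its universal property, the required extension over the full star of $v_j$ --- word for word the role played by the object $z'$ in \cref{Kpt1weakcontractible}.

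The limit only forces the newly created vertical edge into its prescribed class; the transverse edge out of $v_j$ need not yet lie in the complementary class. I would repair this exactly as in \cref{Kpt1weakcontractible}, re-factoring the offending edge through the hypothesis that every morphism of $\Ca$ admits a factorization with first leg in $\E_2$ and second leg in $\E_1$ (condition (1) of the proposition), then amalgamating this factorization with the limit cone along a right-anodyne inclusion and invoking \cite[Lemma 2.1.2.3]{HTT} to extend. The main obstacle is the coherent bookkeeping of the two edge classes: one must check that the re-factored data really assembles into an object of $\op{Fun}^{\E_1,\E_2}(\Copt^n_j,\Ca)$ --- every column edge in $\E_1$, every row edge in $\E_2$ --- and that the whole construction restricts to the prescribed $\alpha_j$ on the gluing locus $\partial\Delta^m\times\Copt^n_{j-1}$, so that $f^j_m$ genuinely factors through $\Lambda^{m+1}_0\to\op{Fun}^{\E_1,\E_2}(\Copt^n_j,\Ca)$. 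Verifying this compatibility, uniformly in $j$, is the delicate part; the remainder is a routine promotion of the one-dimensional argument of \cref{Kpt1weakcontractible} to the present simplicial bookkeeping.
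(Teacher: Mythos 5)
Your overall route is the paper's: the inductive step adjoins the single vertex $v_j=(n-j,n)$ (your correction of the garbled stratification indices is the intended reading, as the base case $\Copt^{n-1}\coprod_{\Delta^{n-1}}\Delta^n\hookrightarrow\Copt^n_0$ confirms), the cone-point value at $v_j$ is produced by a finite limit in a slice via \cref{finitelimitsinovercategories}, the one edge that the universal property does not control is repaired through the factorization hypothesis, and the rest is formal by anodyne-extension lemmas. The packaging differs in one respect: instead of your full star pushout $N(L)\star\Delta^{j-1}\hookrightarrow N(L)\star\Delta^0\star\Delta^{j-1}$, the paper records the smaller presentation $\Cpt^n_j=\Cpt^n_{j-1}\coprod_{[1]}[2]$, shows via \cref{inneranodynepartiallyorderedsetcond} that $\Copt^n_{j-1}\coprod_{\Delta^1}\Delta^2\hookrightarrow\Copt^n_j$ is inner anodyne (so that portion is disposed of by \cite[Corollary 2.3.2.4]{HTT}), and thereby shrinks the genuinely categorical part to a lifting problem over $\Delta^2\cong\Copt^1$, where \cref{Kpt1weakcontractible} is invoked as a black box rather than re-proved. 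Your version carries the coherence bookkeeping by hand over the whole star; the paper's version localizes it inside the already-proved $n=1$ lemma.

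There is, however, one step in your sketch that does not typecheck as written and needs the paper's extra move. For $j\ge 2$ the boundary values $f_m(s)(v_j)$, $s\in\partial\Delta^m$, do not lie over your chosen slice object $w$, the cone's value $c(b)$ at $b=(n-j+1,n)$: morphisms in $\Kpt(\tau)$ point from the cone functor $c$ to the boundary functors, so one has edges $c(b)\to f_m(s)(b)$ but no map $f_m(s)(v_j)\to c(b)$, hence no diagram $\partial\Delta^m\to\Ca_{/w}$ whose limit you could take. (For $j=1$ the vertex $b=(n,n)$ lies on the diagonal, all functors agree there, and your slice does make sense --- which is exactly why \cref{Kpt1weakcontractible} works in that form.) The repair is precisely the paper's first step in proving its auxiliary assumption: for each vertex $s$ first form the pullback $P_s=c(b)\times_{f_m(s)(b)}f_m(s)(v_j)$, both of whose projections lie in $\E_1$ by pullback-stability; the $P_s$ do live in $\Ca_{/c(b)}$, and together with the cone's values at the predecessor $a=(n-j,n-1)$ they assemble into a map $\partial\Delta^m\to\Kpt(\tau_0)$ for the cone edge $\tau_0=(c(a)\to c(b))$, to which \cref{Kpt1weakcontractible} (equivalently, your limit-plus-refactoring argument, now correctly sliced) applies. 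With that pullback step inserted, your argument goes through and coincides with the paper's proof.
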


\textbf{Claim implying the proposition:} Once we prove the claim proving it for $j=n$, it implies the existence of the lifting problem for \cref{filteredcompliftingproblem}. Thus we are reduced to proving the claim.
\begin{proof}[\textbf{Proof of the claim}]

\begin{itemize}
    \item \textbf{j=0:} Note that $\tau \in( (\Delta^n  \to \Ca)$, The amalgamation of $\tau$ and  extension of $f_m' : \Lambda^{m+1}_0 \times \Copt^{n-1} \to \Ca$ gives the map 
    \begin{equation}
        \tilde{f_m} : \Lambda^{m+1}_0 \times (\Copt^{n-1} \coprod_{\Delta^{n-1}} \Delta^n) \to \Ca. 
    \end{equation}
The \cref{filteredcompliftingproblem} now boils down to solving the following lifting problem
\begin{equation}
    \begin{tikzcd}
        \partial\Delta^m \times \Copt^n_0 \coprod_{ \partial\Delta^m \times (\Copt^{n-1} \coprod_{\Delta^{n-1}} \Delta^n) } \Lambda^{m+1}_{m+1} \times (\Copt^{n-1} \coprod_{\Delta^{n-1}} \Delta^n) \arrow[d,hookrightarrow] \arrow[r] & \Ca\\
        \Lambda^{m+1}_{0} \times \Copt^n_0 \arrow[ur,dotted, "f^0_m"] & \Ca.
    \end{tikzcd}
\end{equation}
By \cite[Corollary 2.3.2.4]{HTT}, the solution exists if $\Copt^{n-1} \coprod_{\Delta^{n-1}} \Delta^n \hookrightarrow \Copt^n_0$ is inner anodyne. This follows from \cref{inneranodynepartiallyorderedsetcond} applying $P =\Delta^{n-1}, Q= \Copt^{n-1}, R = \Delta^n$. The condition that $f^m_0$ induces a map $\Lambda^{m+1}_0 \to \op{Fun}(\Copt^n_0 ,\Ca)$ as the addition of the point $(n,n)$ does not add any vertical or horizontal edges in the Hasse diagram of $\Copt^n_0$. This completes the proof for $j=0$.
\item $\textbf{j-1 $\implies$ j:}$ Assume the existence of $f^{j-1}_m$, Notice that $\Cpt^n_j := \Cpt^n_{j-1}\coprod_{[1]}[2]$ where $[1] \to [2]$ is given by $0 \to 0, 1 \to 2$. Then we have an obvious inclusion
\begin{equation}
    \beta_j :  X_j:=\Copt^n_{j-1} \coprod_{\Delta^1}\Delta^2 \hookrightarrow \Copt^n_j
\end{equation}
Using the arguments in the previous point, by \cref{inneranodynepartiallyorderedsetcond}, we see that $\beta_j$ is an inner anodyne.
For the moment, let us assume the following \textbf{assumption}: \\
\begin{center}\label{assumptionforinductionclaim}
 $f^m_{j-1}$ extends to a map $\tilde{f}^m_j : \Lambda^{m+1}_0 \times X_j \to \Ca$ which can be realized as a map $\Lambda^{m+1}_0 \to \op{Fun}^{\E_1,\E_2}(X_j, \Ca)$.
\end{center}
By the assumption, we are again reduced to solve the lifting problem:
\begin{equation}
\begin{tikzcd}
    \partial\Delta^m \times \Copt^n_j \coprod_{\partial\Delta^m \times X_j} \Lambda^{m+1}_0 \times X_j \arrow[d,hookrightarrow] \arrow[r] & \Ca\\
    \Lambda^{m+1}_0 \times \Copt^n_j \arrow[ur,dotted,"f^m_j",swap] & {}
    \end{tikzcd}
\end{equation}
As $\beta_j$ is an inner anodyne, by \cite[Corollary 2.3.2.4]{HTT}, we see that $f^m_j$ exists completing the proof of the induction. This it remains to prove the assumption. 

\begin{proof}[\textbf{Proof of the assumption}]

It is enough to show the existence of the solution of the following problem:

\begin{equation}
    \begin{tikzcd}
        \Lambda^{m+1}_0 \times \Delta^1 \coprod_{\partial\Delta^m \times \Delta^1} \partial\Delta^m \times \Delta^2 \arrow[r,"g"]\arrow[d,hookrightarrow] & \Ca\\
        \Lambda^{m+1}_0 \times \Delta^2 \arrow[ur,dotted,"g'"] & {}
    \end{tikzcd}
\end{equation}
    Here we realize the maps $g$ and $g'$ as morphisms in $\op{Fun}^{\E_1,\E_2}(\Delta^2,\Ca)$ (here we identify $\Cpt^1 \cong \Delta^2$). \\
   A key idea that will be used in this argument is the following : for a simplicial set $K$, we realize the left cone $K^{\triangleleft}$ as the simplicial set $K \times \Delta^1$ where we realize $K \times \Delta^0 \cong K \times [0] \hookrightarrow K \times \Delta^1$ as the degenerate simplex corresponding to a point. We solve this lifting problem in the following steps: 
   \begin{enumerate}
    \item  The map $g$ can be written as a map :
       \begin{equation}
           \partial\Delta^m \times \Delta^1 \times \Delta^1 \coprod_{\partial\Delta^m \times \Delta^1} \partial\Delta^m \times \Delta^2 \to \Ca
       \end{equation}
       Here $\partial\Delta^m \times [1] \times \Delta^1 \hookrightarrow \partial\Delta^m \times \Delta^1 \times \Delta^1$ is the inclusion in the coproduct. One can rewrite the map $g$ as 
       \begin{equation}
           \partial\Delta^m \to \op{Fun}(\Delta^1 \times \Delta^1 \coprod_{\Delta^1}\Delta^2,\Ca)
           \end{equation}

   Let $Q:= \Delta^1 \times \Delta^1 \coprod_{\Delta^1}\Delta^2 $
 Notice that $\Lambda^2_2 = \Delta^1 \coprod_{[0]}\Delta^1 \hookrightarrow \Delta^1 \times \Delta^1 \coprod_{\Delta^1} \Delta^2$ where $\Delta^1 \to \Delta^2$ is the edge $[1] \to [2]$. As $\Ca$ admits pullbacks and edges in direction of $[1] \to [2]$ are in $\E_2$, then we have a morphism $\Lambda^2_2 \to \Ca$ admits a left cone which is a pullback square $\Delta^1 \times \Delta^1 \to \Ca$. This enables us to extend $g$ to a morphism $g_1$ (by \cite[Proposition 4.3.2.15]{HTT})
 \begin{equation}
     g_1 : \partial\Delta^m \to \op{Fun}(Q \coprod_{\Lambda^2_2}\Delta^1 \times \Delta^1, \Ca) 
 \end{equation}
Adding inner horns of $2$-simplex and $3$-simplex (as $\Ca$ is an $\infty$-category) followed by using universal property of limits (i.e initial object in the over category of diagram), we see that $g_1$ can be extended to $g_2$
\begin{equation}
    g_2 : \partial\Delta^m \to \op{Fun}(\Delta^2 \times \Delta^1,\Ca)
\end{equation}
where for every $0 \le i \le m$ and $j = \{0,1\}$
the map $g_2|_{i,j} : \Delta^2 \cong \op{Cpt}^1 \to \Ca$ which sends vertical edges to $\E_2$ (this is true as $\E_2$ is stable under pullbacks) and horizontal edges to $\E_1$. Typically horizontal edge which is constructed by the universal property will not necessarily be in $\E_1$. Using the same argument as in the construction of $g_m'$ from $h_m''$ in \cref{Kpt1weakcontractible}, we can assume the horizontal edges lie in $\E_2$.
\item The morphism $g_2$ when restricted to $\Delta^2 \times \{[0]\}$ gives a morphism $g_2|_{[0]} : \op{Kpt}^1(\tau_0)$ where $\tau_0 := g|_{[0] \times \Delta^1}$ (here $[0]$ is the vertex of $\Lambda^{m+1}_0$). By \cref{Kpt1weakcontractible}, we see that $g_2|_{[0]}$ extends to \[g_2|_{[0]}':\Lambda^{m+1}_0 \times \Delta^2= \partial\Delta^m \times \Delta^1 \times \Delta^2 \to \Ca \]
\item The amalgamation of $g_2$ and $g_2|'_{[0]}$, gives us the map
\begin{equation}
    g_3 : \partial\Delta^m \times \Delta^2 \times \Delta^1 \coprod_{\partial\Delta^m \times \Delta^2} \partial\Delta^m \times \Delta^2 \times \Delta^1 \to \Ca
\end{equation}
where the two inclusions in coproduct are given by $[1] \to \Delta^1$ and $[0] \to \Delta^1$. Rewriting this map, we get \[g_3 : \partial\Delta^m \times \Delta^2 \to \op{Fun}(\Lambda^2_1,\Ca) \]
As $\Ca$ is an $\infty$-category  by \cite[Corollary 2.3.2.2]{HTT}, we get the morphism 
\begin{equation}
    g_4 : \partial\Delta^m \times \Delta^2 \times \Delta^2 \to \Ca
\end{equation}
Now we identify the cone $\Lambda^{m+1}_0$ by $\partial\Delta^m \times\Delta^1$, restricting $g_4$ to $\Delta^m \times \Delta^{1,2} \times \Delta^2$, we get the morphism:
\begin{equation}
    g' : \Lambda^{m+1}_0 \times \Delta^2 \cong \partial\Delta^m \times \Delta^1 \times \Delta^2 \xrightarrow{g_4} \Ca.
\end{equation}

which can be realized as a map $g' : \Lambda^{m+1}_0 \to \op{Fun}^{\E_1,\E_2}(\op{Cpt}^1,\Ca)$.
   \end{enumerate}
\end{proof}
\end{itemize}
\end{proof}
\end{enumerate}

 \end{proof}

 \begin{notation}\label{alphamapforpcom}
     For any $n$-simplex $\tau$ of $\Ca$, we have a canonical morphism :
     \begin{equation}
         \alpha_{\op{comm}} : \op{Kpt}(\tau) \to \op{Fun}(\bx^n, \dd^*_2\Ca_{\E_1,\E_2})
     \end{equation}
     defined as follows: 
     \begin{enumerate}
         \item Let $\sigma : \op{Cpt}^n \to \Ca$ be a zero simplex in L.H.S. We restrict it to $\sigma : \bx^n \to \Ca$. Now recall that $\bx^n = \bigcup_{i=0}^n \bx^n_i$. By definition $\bx^n_i = \Delta^{n-i} \times \Delta^i$. For each $i$, $\sigma$ induces a map of $2$-marked simplicial sets 
         \begin{equation}
             (\Delta^{n-i} \times \Delta^i, \op{ver},\op{hor}) \to (\Ca,\E_1,\E_2)
         \end{equation}
     
    For any two objects $a=(a_1,a_2) \le b =(b_1,b_2) \in \Delta^{n-l} \times \Delta^l$, we define the functors : 
    \begin{equation}
        \Lambda(a,b) = (b_1,0) \vee a \wedge b \quad ; \quad \mu(a,b) = (0,b_2) \vee a \wedge b .
    \end{equation}

Given any $n$-simplex $ \gamma  : \Delta^m \to \Delta^{n-i} \times \Delta^i$, we define : 

\[\alpha_i(\sigma)(\gamma) : \Delta^m \times \Delta^m \to \Ca \]
by  for any $(p,q)$
\begin{enumerate}
    \item $ \sigma(\Lambda(\gamma(q),\gamma(p)))$ \text{when} $p \ge q$ ;
    \item $\sigma(\mu(\gamma(p),\gamma(q))$ \text{when} $p \le q$.
\end{enumerate}
One checks that $\Lambda(a,a)=\mu(a,a)=a$ and from the definitions that $\alpha_i(\gamma) \in (\dd^*_2\Ca_{\E_1,\E_2})_m$. Combining together, we get a map :
\begin{equation}
    \alpha_ i(\sigma) : \bx^n_i:= \Delta^{n-i} \times \Delta^i \to  \dd^*_2\Ca_{\E_1,\E_2}.
\end{equation}
Taking union over $i$'s we get
\begin{equation}
    \alpha_{\op{comm}}(\sigma) : \bx^n \to \dd^*_2\Ca_{\E_1,\E_2}.
\end{equation}

\item For a map on higher simplicies, that is mapping $\Delta^m \times \Copt^n \to \Ca$, one defines in the similar way as aboe but taking care into the compactifications that morphisms between compactfiications go in the direction of $\E_1$.
     \end{enumerate}

 \end{notation}
	
\subsection{Proof of Theorem A.}
We restate the theorem from the introduction.
\begin{theorem}[Theorem A : Extension along $p_{\op{comm}}$]\label{thmA}
	Let $\Ca$ be an $\infty$-category and $\E_1,\E_2$ be a collection of edges in $\Ca$ with the following conditions:
  \begin{enumerate}
      \item For every morphism in $f \in \Ca$, there exists a $2$-simplex in $\Ca$ of the form :
      \begin{equation}
          \begin{tikzcd}
              {} & y \arrow[dr,"p"] & {}\\
              x \arrow[ur,"q"] \arrow[rr,"f"] && z
              \end{tikzcd}
              \end{equation}
              where $p \in \E_1$ and $q \in \E_2$.

\item The edges $\E_1$ and $\E_2$ are admissible.
 \end{enumerate}
  Then for any $\infty$-category $\D$, there exists a solution to the lifting problem:
\begin{equation}
\begin{tikzcd}
    \dd^*_2\Ca_{\E_1,\E_2} \arrow[d,"p_{\op{comm}}",swap] \arrow[r,"g_{\op{comm}}"] &\D\\
    \Ca \arrow[ur,"g'_{\op{comm}}",swap,dotted] & {}.
    \end{tikzcd}
\end{equation}
  \end{theorem}

\begin{proof} As explained in the section \cref{roadmap}, we use the technical lemma to prove Theorem A.  
 \begin{enumerate}
\item  Let $\tau$ be a $n$-simplex of $\Ca$. We have the chain of morphisms : 

\begin{equation}
    \alpha'_n : \op{Kpt}(\tau) \xrightarrow{\alpha_{\op{comm}}} \op{Fun}(\bx^n,\dd^*_2\Ca_{\E_1,\E_2}) \xrightarrow{g_{\op{comm}}} \op{Fun}(\bx^n,\D)
\end{equation}
where : $\alpha_{\op{comm}}$ is the morphism explained in \cref{alphamapforpcom}.
Define :
\begin{equation}
    \alpha_n : N(\tau) : \op{Kpt}(\tau)\times_{\op{Fun}(\bx^n,\D)}\op{Fun}(\Copt^n,\D) \to \op{Fun}(\op{Cpt}^n,\D) \to \op{Fun}(\Delta^n,\D)
\end{equation}
  As $\bx^n \to \Copt^n$ is an inner anodyne (\cref{bxncptinneranodyne}), it follows from \cite[Corollary 2.3.2.5]{HTT} that $\op{Fun}(\Cpt^n,\D) \to \op{Fun}(\bx^n,\D)$ is a trivial vibration. This implies $N(\tau) \to \op{Kpt}(\tau)$ is trivial vibration. As $\op{Kpt}(\tau)$ is weakly contractible (\cref{kptweaklycontractible}), we see that $N(\tau)$ is weakly contractible.

\item One notices that any $1$-simplex in $\op{Kpt}(\tau)$ maps to an equivalence in via $\alpha_n$. Let $\tau$ be the $n$-simplex whose vertices are $\tau_1,\tau_2,\cdots \tau_n$. Then $\alpha_n$ sends any object to $n$-simplex of $\D$ whose vertices are $g_{\op{comm}}(\tau_1),g_{\op{comm}}(\tau_2) \cdots g_{\op{comm}}(\tau_n)$. Then $\alpha_n$ maps a $1$-simplex to an morphism : 
\[\Delta^n \times \Delta^1 \to \D \]
such that $\{i\} \times \Delta^1 \to \D$ for all $0 \le i \le n$ is an equivalence. Hence the  above $1$-simplex is an isomorphism. 
This means $\alpha_n$ can be realized as :
\begin{equation}
    \alpha_n : \N(\tau) \to \op{Fun}^{\simeq}(\Delta^n,\D)
\end{equation}
\item As all the above maps are functorial in $n$, we get the map :
\begin{equation}
    \alpha : \N \to \op{Map}[\Ca,\D]
\end{equation}
\item Now suppose $\tau$ arises from a simplex $\tau'$ in $\dd^*_2\Ca_{\E_1,\E_2}$. Then one can choose an element in $\op{Kpt}(\tau)$ to be the the simplex $\tau'$ restricted to $\tau_1: \Cpt^n \hookrightarrow \Delta^n \times \Delta^n$. Also the simplex $\tau'$ by applying $g_{\op{comm}}$ yields a simplex $\tau_2: \Cpt^n \hookrightarrow \Delta^n \times \Delta^n \to \D$. By construction, it follows that $\alpha_n$ sends $(\tau_1,\tau_2)$ to the element $g_{\op{comm}}(\tau')$. \\

This shows that the compatible collections of $(\tau_1,\tau_2)$ provides an element $\omega \in \Gamma(p_{\op{comm}}^*\N)_0$ which via $p_{\op{comm}}^*(\alpha)$ gets mapped to $g_{\op{comm}}$.

\item By \cref{maintechnicalsimplicesthm}, we get the desired extension $g_{\op{comm}}'$.

\end{enumerate}
\end{proof}
\section{Extension along $p_{\op{cart}}.$}

This section involves proving theorem B, namely to extend the morphism from cartesian grid squares to commutative grid squares. The key idea to give a combinatorial description of understanding how one decomposes commutative squares into cartesian squares. 
In this context, given a map $g_{\op{cart}}: \dd^*_2\Ca^{\op{cart}}_{\E_1,\E_2} \to \D$, we want to define $g_{\op{cart}}'$ such that the diagram
\begin{equation}
    \begin{tikzcd}
        \dd^*_2\Ca^{\op{cart}}_{\E_1,\E_2} \arrow[d,"p_{\op{cart}}"]\arrow[r,"g_{\op{cart}}"] & \D \\
        \dd^*_2\Ca_{\E_1,\E_2} \arrow[ur,"g_{\op{cart}}'",swap] & {}
    \end{tikzcd}
\end{equation}
commutes.

Let $\tau : \Delta^1 \times \Delta^1 \to \Ca$ be a $1$-simplex of $\dd^*_2\Ca_{\E_1,\E_2}$ of the form :
\begin{equation}
    \begin{tikzcd}
        x \arrow[r] \arrow[d] & y \arrow[d]\\
        z \arrow[r] & w.
    \end{tikzcd}
\end{equation}

Decompose the square into the following diagram :

\begin{equation}
    \begin{tikzcd}
        x \arrow[dr] & {} & {}\\
        {} & x' \arrow[d] \arrow[r] & y \arrow[d] \\
        {} &  z \arrow[r] & w
    \end{tikzcd}
\end{equation}
where $x \to x'$ is in $\E_1 \cap \E_2$. If $x \to x'$ is $-1$-truncated i.e. a monomorphism, then we have a diagram of the form :
\begin{equation}\label{cartesiansquaredecomposition}
    \begin{tikzcd}
        x \arrow[r,"\op{id}"] \arrow[d,"\op{id}"] & x \arrow[d] & {} \\
        x \arrow[r] & x' \arrow[r] \arrow[d] & y \arrow[d] \\
        {} & z \arrow[r] & w
    \end{tikzcd}
\end{equation}
where both squares are pullback squares. This gives us a map :
\begin{equation}
    h' : \Lambda^2_1 \to \dd^*_2\Ca^{\op{cart}}_{\E_1,\E_2} \xrightarrow{g_{\op{cart}}} \D
    \end{equation}
which extends to 
\begin{equation}
    h :\Delta^2 \to \D
\end{equation}

Then we define $g'_{\op{cart}} = h'(0 \to 2)$. This gives us a way to construct the map $g_{\op{comm}}$ for a $1$-simplex in the case where $x \to x'$ is $-1$-truncated. 

Motivating the following construction and mimicking the ideas in extension of $p_{\op{comm}}$, we list the key ideas in extending along $p_{\op{cart}}$ :
\begin{enumerate}
    \item The key simplicial set is $\Cart^n$ which plays a similar role compared to $\Copt^n$. It is constructed from considering partially ordered "upward sets" in $\Delta^n \times \Delta^n$. The simplicial set comes with a natural map $\Delta^n \times \Delta^n \to \Cart^n$. For $n=1$, the Hasse diagram of $\Cart^1$ looks like \cref{cartesiansquaredecomposition}.
    \item Given any $n$-simplex of $\dd^*_2 \Ca_{\E_1,\E_2}$, we construct any  \textit{$\infty$-category of cartesianizations of $\tau$} which is denoted by $\Krt(\tau)$. This  encodes a $n$-simplex analogue of square decomposing into such diagram of the form \cref{cartesiansquaredecomposition}. It turns out that collection of such decompositions is a contractible Kan complex.  
    \item In order to keep the $i$-truncations we proceed by induction. In particular for a fixed $i$, we consider the subsimplicial set $\dd^*_2\Ca^i_{\E_1,\E_2}$ of $\dd^*_2\Ca_{\E_1,\E_2}$ spanned by $n \times n$ grids where each square admits a decomposition of the form \cref{cartesiansquaredecomposition} where $x \to x'$ is $i$-truncated.  Note that :
    \begin{itemize}
        \item $\dd^*_2\Ca^{\op{cart}}_{\E_1,\E_2}= \dd^*_2\Ca^{-2}_{\E_1,\E_2}$
        \item $\dd^*_2\Ca_{\E_1,\E_2} = \bigcup_{i\ge -2} \dd^*_2\Ca^i_{\E_1,\E_2}$.
    \end{itemize}

    The extension along $p_{\op{cart}}$ essentially follows if one proves extension along $p^i_{\op{cart}}$ which is the following map
   \begin{equation}
       p^{i}_{\op{cart}} : \dd^*_2\Ca^{-2}_{\E_1,\E_2} \to \dd^*_2\Ca^i_{\E_1,\E_2}
   \end{equation}
      \item  Similar to $\alpha$ construction in commutative case, given any $n$-simplex $\tau$ of $\dd^*_2\Ca^i_{\E_1,\E_2}$ and given an element $\sigma$ of $\Krt(\tau)$, we define a morphism from a combinatorial simplicial set $\boxplus^{\op{cart}}_n$ (analog of $\bx^n$): 
    \begin{equation}
       \epsilon^{\op{cart}}_n : \boxplus^{\op{cart}}_n \to \dd^*_2\Ca^i_{\E_1,\E_2}
    \end{equation}
The simplicial set admits an inclusion of the form $\boxplus^{\op{cart}}_n \to  \op{Cart}^n$ which is an inner anodyne. 
      \item Following the idea sketch of proving the theorem of $p_{\op{comm}}$, given an $n$-simplex $\tau$ of $\dd^*_2\Ca^i_{\E_1,\E_2}$ one constructs via $\beta$ a map : 
      \begin{equation}
          \alpha_n : \Krt(\tau) \to \op{Fun}(\Cart^n, \D) \to \op{Fun}(\Delta^n,\D)
      \end{equation}
      where $\Delta^n \to \Cart^n$ is the diagonal map. Using the technical lemma of \cref{maintechnicalsimplicesthm}, we shall construct the morphism
      \begin{equation}
          g'_{\op{cart}} = \cup g'^{i}_{\op{cart}} : \dd^*_2\Ca_{\E_1,\E_2} \to \D
      \end{equation}
    \end{enumerate}

\subsection{The $\infty$-category of cartesianizations,}

	In order to encode the diagrams which enables us to decompose commutative squares into pullback squares, we define the notion of up-sets which shall lead us defining the $\infty$-category of cartesianizations, an analogue of $\Kpt(\tau)$. We recall the general notion of lattices and more on partially ordered sets in \cref{partiallyorderedsetsdefinitionandprop}. \\
	
	\begin{definition}
		Let $P$ be a partially ordered set. $Q \subset P$ is said to be an \textit{up-set} if for every $q \in Q$ and $ p \ge q $ in $P$ implies $ p \in Q$. We shall denote the category of up-sets of $P$ by $\U(P)$. It is a partially ordered set where the ordering is given by inverse inclusion. 
	\end{definition}
	\begin{notation}
		For any partially ordered set, we denote the products (infima) by $ \wedge$ and coproducts (suprema) by $\vee$.  In $\U(P)$, we have $ Q \wedge Q' = Q \cup Q'$ and $ Q \vee Q' = Q \cap Q'$. \\
		
		There is a canonical order preserving map $\varsigma^P: P \to \U(P)$ defined by $p \to P_{p/}$. 
	\end{notation}
	There are special squares one considers in a partially ordered set, namely exact squares.
	\begin{definition}
		A square in a partially ordered set is an \textit{exact square} if it is both pushout and a pullback square.
	\end{definition}
	We shall state two important property of exact squares :
    \begin{lemma}\label{morphisminUPinexactsquare}
    Every morphisms $ Q \to Q'$ in $\U(P)$ is a composition of finite sequence of exact pullbacks of morphisms $ \sigma^P(x) \to \sigma^P(x)-x$ where $x \in Q -Q'$.
    \end{lemma}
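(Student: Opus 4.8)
The plan is to reduce the statement to deleting one element of $Q - Q'$ at a time, and to realize each such elementary deletion as an exact pullback of a morphism of the shape $\sigma^P(x) \to \sigma^P(x) - x$. Recall that, because $\U(P)$ is ordered by inverse inclusion, a morphism $Q \to Q'$ is precisely an inclusion $Q \supseteq Q'$, so $Q - Q'$ is the (finite, since $P$ is finite) set of elements removed in passing from $Q$ to $Q'$. The base case $Q = Q'$ is the identity, i.e.\ the empty composite.

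First I would set up an induction on $|Q - Q'|$. For the inductive step I would choose a \emph{minimal} element $x$ of the sub-poset $Q - Q'$ and put $Q_1 := Q \setminus \{x\}$. The key point to verify is that $Q_1$ is again an up-set containing $Q'$: it contains $Q'$ since $x \notin Q'$, and it is an up-set because the only way the up-set condition could fail is if some $z = x$ were forced into $Q_1$ by $z \geq y$ with $y \in Q_1$; then $x > y$ with $y \in Q$, and since $Q'$ is an up-set this $y$ cannot lie in $Q'$, so $y \in Q - Q'$ with $y < x$, contradicting minimality of $x$. This produces a morphism $Q \to Q_1$ in $\U(P)$ with $|Q_1 - Q'| = |Q - Q'| - 1$, and by the inductive hypothesis $Q_1 \to Q'$ is already a finite composite of exact pullbacks of the desired elementary maps.

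Next I would identify the single step $Q \to Q_1$ with an exact pullback of $\sigma^P(x) \to \sigma^P(x) - x$. I consider the square with top-left $Q$, top-right $\sigma^P(x) = P_{x/}$, bottom-left $Q_1$ and bottom-right $\sigma^P(x) - x$; all four maps exist because $P_{x/} \subseteq Q$ (as $x \in Q$ and $Q$ is an up-set) and $P_{x/} \setminus \{x\} \subseteq Q_1$. Using that in $\U(P)$ meets are unions and joins are intersections, exactness reduces to the two set-theoretic identities $Q_1 \wedge \sigma^P(x) = Q_1 \cup P_{x/} = Q$ (pullback corner) and $Q_1 \vee \sigma^P(x) = Q_1 \cap P_{x/} = P_{x/} \setminus \{x\} = \sigma^P(x) - x$ (pushout corner), both immediate from $P_{x/} \subseteq Q$ and $x \notin Q_1$. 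Hence the square is exact, and $Q \to Q_1$ is an exact pullback of $\sigma^P(x) \to \sigma^P(x) - x$ with $x \in Q - Q'$.

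Composing this single step with the inductively obtained factorization of $Q_1 \to Q'$ completes the proof. The only genuinely delicate point is choosing a \emph{minimal} (rather than maximal) element of $Q - Q'$: this is exactly what guarantees that each intermediate set remains an up-set, so that the chain $Q = Q_0 \to Q_1 \to \cdots \to Q_k = Q'$ lives in $\U(P)$. The exactness of each elementary square is then the routine lattice computation above, so I expect the minimality argument to be the crux of the matter.
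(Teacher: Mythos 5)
Your proof is correct and follows essentially the same route as the paper: both factor $Q \to Q'$ as a chain of single-element deletions and exhibit each elementary step as an exact pullback of $\sigma^P(x) \to \sigma^P(x)-x$ via the lattice identities $Q_{1} \cup \sigma^P(x) = Q$ and $Q_{1} \cap \sigma^P(x) = \sigma^P(x)-x$. The paper compresses the crucial point into the phrase ``by ordering of $\{x_i\}$,'' whereas you make it explicit that one must delete an element minimal in $Q-Q'$ (hence, since $Q'$ is an up-set, minimal in $Q$) so that each intermediate set remains an up-set; this is exactly the intended argument, spelled out.
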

    \begin{proof}
        We consider a finite chain of compositions $Q \to Q_1 \to Q_2 \cdots Q_i=Q'$ where $Q_j = Q_{j+1} \cup \{x_j\}$. Notice that 
        \begin{equation}
        \begin{tikzcd}
            Q_j \arrow[r] \arrow[d] & \sigma^P(x_j) \arrow[d] \\
            Q_{j+1} \arrow[r] & \sigma^P(x_j)-\{x_j\}
            \end{tikzcd}
        \end{equation}
        is both a pullback square by ordering of $\{x_i\}$ and pushout square also by the ordering. 
    \end{proof}
	\begin{lemma}
		Let $\Ca$ be an $\infty$-category and $F: N(\U(P)) \to \Ca$ be a functor. Then if $F$ is a right Kan extension along $\varsigma^P$, it sends exact squares to pullback squares. 
	\end{lemma}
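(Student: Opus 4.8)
The plan is to compute $F$ via the pointwise formula for right Kan extensions and then reduce the exactness condition to a Mayer--Vietoris decomposition of the indexing posets.

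First I would identify the values of $F$. Since $\varsigma^P$ is an order embedding (hence fully faithful), $F$ restricts along $\varsigma^P$ to a functor $F_0 := F\circ\varsigma^P : N(P)\to\Ca$ with $F \simeq \op{Ran}_{\varsigma^P}F_0$. By the pointwise formula for right Kan extensions (\cite[\S 4.3.2--4.3.3]{HTT}), for an up-set $Q$ one has $F(Q)\simeq \lim F_0\circ\pi$ over the comma category $(Q\downarrow\varsigma^P)$, whose objects are pairs $(p, u\colon Q\to\varsigma^P(p))$. The key combinatorial observation is that this comma category is canonically isomorphic to the subposet $Q\subseteq P$: a morphism $Q\to\varsigma^P(p)$ in $\U(P)$ exists iff $P_{p/}\subseteq Q$, and since $Q$ is an up-set this holds iff $p\in Q$; the morphisms $(p,u)\to(p',u')$ are exactly the relations $p\le p'$. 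Hence
\[ F(Q)\simeq \lim_{p\in Q}F(\varsigma^P(p)). \]
As a sanity check, for $Q=\varsigma^P(p_0)=P_{p_0/}$ the poset $Q$ has initial object $p_0$, so the limit is $F(\varsigma^P(p_0))$, consistent with full faithfulness.

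Next I would translate exactness into set-theoretic operations. In $\U(P)$ the order is inverse inclusion with $\wedge=\cup$ and $\vee=\cap$, so in a square the pullback and pushout of the two middle corners are their meet and join. Thus an exact square
\[ \begin{tikzcd} Q_{00}\arrow[r]\arrow[d] & Q_{01}\arrow[d]\\ Q_{10}\arrow[r] & Q_{11} \end{tikzcd} \]
is precisely one with $Q_{00}=Q_{01}\cup Q_{10}$ (being a pullback) and $Q_{11}=Q_{01}\cap Q_{10}$ (being a pushout), now as honest up-sets. The heart of the argument is then a Mayer--Vietoris statement for the indexing posets. Because $Q_{01}$ and $Q_{10}$ are up-sets, any chain in $Q_{00}=Q_{01}\cup Q_{10}$ has its minimal element in $Q_{01}$ or in $Q_{10}$, hence lies entirely in one of the two; consequently the square of nerves
\[ \begin{tikzcd} N(Q_{11})\arrow[r]\arrow[d] & N(Q_{01})\arrow[d]\\ N(Q_{10})\arrow[r] & N(Q_{00}) \end{tikzcd} \]
is a pushout of simplicial sets along monomorphisms, with $N(Q_{11})=N(Q_{01})\cap N(Q_{10})$.

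Applying $\op{Fun}(-,\Ca)$, which carries this pushout to a pullback of $\infty$-categories, and combining with the formula above, I would conclude
\[ F(Q_{00})\simeq\lim_{p\in Q_{00}}F(\varsigma^P(p))\simeq F(Q_{01})\times_{F(Q_{11})}F(Q_{10}), \]
which is exactly the assertion that $F$ sends the exact square to a pullback. The main obstacle is this last step: making rigorous that the limit functor takes the (homotopy) pushout of indexing diagrams to the corresponding pullback. I would handle it by a Yoneda/adjunction argument, using $\op{Fun}(N(Q_{00}),\Ca)\simeq\op{Fun}(N(Q_{01}),\Ca)\times_{\op{Fun}(N(Q_{11}),\Ca)}\op{Fun}(N(Q_{10}),\Ca)$ to identify a cone over $F|_{N(Q_{00})}$ with a compatible pair of cones over the two pieces agreeing on $N(Q_{11})$, so that mapping into the two candidate limits coincide. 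One must also ensure the relevant finite limits exist, which is where the standing hypotheses on $\Ca$ enter; alternatively, the decomposition of \cref{morphisminUPinexactsquare} of any morphism in $\U(P)$ into exact pullbacks of the basic morphisms $\varsigma^P(x)\to\varsigma^P(x)\setminus\{x\}$ lets one reduce the verification to a single generating exact square.
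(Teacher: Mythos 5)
Your proposal is correct, but it takes a genuinely different route from the paper. The paper never makes your key simplification, namely the pointwise identification $F(Q)\simeq\lim_{p\in Q}F(\varsigma^P(p))$ via the isomorphism of the comma category $(Q\downarrow\varsigma^P)$ with the poset $Q$ itself. Instead, it enlarges the image of $\varsigma^P$ to the auxiliary poset $R:=\varsigma^P(P)\cup\{Q,Q',Q\cap Q'\}$, invokes transitivity of right Kan extensions (\cite[Proposition 4.3.2.8]{HTT}) to express $F(Q\cup Q')$ as the limit of $F$ over the undercategory of $Q\cup Q'$ in $R$, and then proves that the inclusion of the cospan $Q\to Q\cap Q'\leftarrow Q'$ into that undercategory is final via \cite[Theorem 4.1.3.1]{HTT}, by checking that all fibers are weakly contractible. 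Your route replaces this cofinality argument with a Mayer--Vietoris decomposition ($N(Q\cup Q')=N(Q)\cup N(Q')$ with intersection $N(Q\cap Q')$, a pushout along monomorphisms) combined with the standard fact that limits carry pushouts of index diagrams to pullbacks (\cite[Section 4.2.3]{HTT}, or your slice/cone argument). Notably, both proofs hinge on exactly the same up-set combinatorics: your observation that a chain in $Q\cup Q'$ has its minimum in $Q$ or $Q'$ and hence lies entirely in one of them is precisely the paper's observation that every $r$ under $Q\cup Q'$ satisfies $Q\le r$ or $Q'\le r$. What the paper's approach buys: it stays inside the Kan-extension and cofinality machinery used elsewhere in the article, and the identification of the maps in the image square with limit-cone legs is automatic because $Q$, $Q'$, $Q\cap Q'$ are themselves objects of the index category $R$. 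What yours buys: an explicit and transparent formula for all values of $F$; the price is that you must supply the decomposition-of-limits lemma (the "main obstacle" you correctly flag, and your $\op{Fun}$-decomposition plus cone-identification plan is a sound way to do it), including the naturality point that $F$ applied to the morphisms of the square agrees with the restriction maps between the limits --- a compatibility glossed at the same level of rigor as in the paper itself. One caveat: your closing alternative, reducing to a single generating exact square via \cref{morphisminUPinexactsquare}, is not a complete reduction as stated, since that lemma decomposes morphisms rather than squares and one would additionally need a pasting argument; but since your main argument does not rely on it, this does not affect correctness.
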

\begin{proof}

Consider the exact square
\begin{equation}
    \begin{tikzcd}
        Q \cup Q' \arrow[r]\arrow[d] & Q \arrow[d]\\
        Q' \arrow[r] & Q \cap Q'
    \end{tikzcd}
\end{equation}

As $F$ is a Kan extension along $\sigma^n$, we  know that $F(Q \cup Q')$ is the limit over $F(\U(P)_{Q\cup Q'/})$. Consider $R:= \sigma^P(P) \cup\{Q,Q',Q\cap Q'\}$. We have $\sigma^P(P) \subset R \subset \U(P)$. By restriction we know that $F|_R$ is a RKE along $F|_{\sigma^n}$. Then applying \cite[Proposition 4.3.2.8]{HTT}, we see that $F$ is RKE along $F|_{R}$. In other words, $F(Q \cup Q')$ is limit over $F(R_{Q\cup Q'/})$. Thus the argument decreases the objects where the limit is taken over.\\

Our aim is to show that $F$ is the limit of the diagram $\Lambda^2_2 \to R$ given by 
\begin{equation}
\begin{tikzcd}
    {} & Q \arrow[d] \\
    Q' \arrow[r] & Q \cap Q'.
    \end{tikzcd}
\end{equation}
As final maps preserve limits (\cite[Proposition 4.1.1.8]{HTT}, it is enough to show the map $\Lambda^2_2 \to R'$ where $R' =R_{Q\cap Q'/}$ is final. By \cite[Theorem 4.1.3.1]{HTT},we need to check that for every $r \in R'$, the fiber $R'_r :=\Lambda^2_2 \times_{R'}R'_{/r}$ is weaky contractible. Notice that 
\begin{equation}
    R' = R'_{Q/} \cup R'_{Q'/}.
\end{equation}
This the square is a pullback square.
We have two cases now to deal with :
\begin{itemize}
    \item  $Q \le r$ or $Q' \le r$, then $R'_r := Q$ or $Q'$. This it is weakly contractible.
    \item $Q\le r$ and $Q \le r'$ , then by the square being a pushout square, we see that $Q \cap Q '\le r$, this implies $R'_r = \Lambda^2_2$ which is also weakly contractible.
\end{itemize}

This completes the proof of the lemma.

\end{proof}

	We move on to defining the main simplicial set $\Crt^n$ which encodes the information of how to construct cartesian squares out of commutative squares.  
	\begin{definition}
		Consider $[n] \times [n]$. We shall denoted the partially ordered set of non-empty up-sets of $[n] \times [n]$ by $\Crt^n$. \\
		We denote  $\varsigma^n:= \varsigma^{[n] \times [n]}: [n] \times [n] \to \Crt^n$ to be the usual map sending $(p,q) \to ([n] \times [n])_{(p,q)/}$.\\
		Let $\Cart^n:= N(\Crt^n)$ and $\varsigma^n: \Delta^n \times \Delta^n \to \Cart^n$ be the map induced from $\varsigma^n$.  
	\end{definition}
	\begin{remark}
		Some remarks on $\Cart^n$ are as follows:
		\begin{enumerate}
			\item The diagram of $\Cart^1$ is as follows:
			\begin{center}
				\begin{tikzcd}
					b_{00} \arrow[dr] & {} & {} \\
					{} & P \arrow[r] \arrow[d] & b_{01} \arrow[d] \\
					{} & b_{10} \arrow[r] & b_{11}
				\end{tikzcd}
			\end{center}
			Here $b_{ij}:= ([1] \times [1])_{(i,j)/}$ and $P = b_{01} \wedge b_{10}$. 
		\end{enumerate} 
	\end{remark}
	
	\begin{definition}\label{inftycatofcartesianizationsdef}
		Let $\Ca$ be an $\infty$-category. Let $\tau: \Delta^n \times \Delta^n \to  \Ca$ be a map. We define the simplicial set $\Krt(\tau)$ which is defined as the pullback of the diagram:
		
		\begin{center}
			\begin{tikzcd}
				{} & \Krt(\tau)_{\op{RKE}} \arrow[d] \\
				\Delta^0 \arrow[r,"\tau"] & \op{Fun}(\Delta^n \times \Delta^n, \Ca) 
			\end{tikzcd}
		\end{center}	
		where $\Krt(\tau)_{\op{RKE}}$ is the sub-simplicial set of $\op{Fun}(\Cart^n,\Ca)$ which are right Kan extensions along $\varsigma^n$. 
	\end{definition}
	
	\begin{proposition}\label{KrtcontKancomplex}
		If $\Ca$ admits pullbacks, the simplicial set $\Krt(\tau)$ is a contractible Kan complex. 
	\end{proposition}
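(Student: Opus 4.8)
The plan is to realise $\Krt(\tau)$ as the fibre of a trivial Kan fibration and to invoke the theory of right Kan extensions. Write $P = [n]\times[n]$. The map $\varsigma^n\colon \Delta^n\times\Delta^n \to \Cart^n$ is the nerve of the fully faithful order embedding $\varsigma^P\colon P \to \Crt^n$, hence a monomorphism identifying $\Delta^n\times\Delta^n$ with a full simplicial subset of $\Cart^n$. Thus $\Krt(\tau)_{\op{RKE}}\subseteq \op{Fun}(\Cart^n,\Ca)$ is exactly the full subcategory of functors that are right Kan extensions of their restriction along $\varsigma^n$, in the sense of \cite[\S 4.3.2]{HTT}. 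By \cite[Proposition 4.3.2.15]{HTT}, the restriction functor $\Krt(\tau)_{\op{RKE}} \to \op{Fun}(\Delta^n\times\Delta^n,\Ca)$ is a trivial Kan fibration onto the full subcategory spanned by those $F_0$ admitting a right Kan extension along $\varsigma^n$. If I can show that \emph{every} such $F_0$ admits one, this map is a trivial Kan fibration onto all of $\op{Fun}(\Delta^n\times\Delta^n,\Ca)$. Since $\Krt(\tau)$ is by definition the pullback of this map along $\tau\colon \Delta^0 \to \op{Fun}(\Delta^n\times\Delta^n,\Ca)$, and trivial Kan fibrations are stable under pullback, $\Krt(\tau) \to \Delta^0$ would then be a trivial Kan fibration, i.e. $\Krt(\tau)$ is a contractible Kan complex.

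Everything reduces to the existence of the pointwise right Kan extension, which I expect to be the main obstacle. By the pointwise formula for right Kan extensions (see \cite[\S 4.3.2, \S 4.3.3]{HTT}), $F_0$ admits a right Kan extension along $\varsigma^n$ precisely when, for every $Q \in \Crt^n$, the limit of $F_0$ over the relevant comma category exists in $\Ca$. Since all objects in sight are nerves of posets, this comma category is the full subposet $\{(p,q)\in P : Q \le \varsigma^P(p,q)\}$; and $Q \le \varsigma^P(p,q)$ in $\Crt^n$ (reverse inclusion) means $P_{(p,q)/}\subseteq Q$, i.e. $(p,q)\in Q$. Hence the comma category is the up-set $Q$ itself, and the required limit is $\lim_{(p,q)\in Q} F_0(p,q)$. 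So I must show: for every non-empty up-set $Q \subseteq P$, the diagram $F_0|_Q$ admits a limit, using only that $\Ca$ admits pullbacks.

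I would prove this by induction on $|Q|$. A non-empty up-set always contains the maximum $(n,n)$ of $P$. If $Q$ has a single minimal element $x$, then $Q = P_{x/} = \varsigma^P(x)$ has $x$ as a minimum, so the limit is $F_0(x)$ and exists. If $Q$ has at least two minimal elements, choose a minimal $x\neq (n,n)$ and set $Q' := Q\setminus\{x\}$ (still an up-set) and $U_x := \varsigma^P(x)$. Then $Q = Q'\cup U_x$ with $Q'\cap U_x = \{y\in P : y > x\}$, and because $x$ is minimal in $Q$ every chain in $Q$ lies entirely in $Q'$ or entirely in $U_x$; consequently their nerves form a pushout square of simplicial sets
\begin{equation}
N(Q) \cong N(Q') \coprod_{N(Q'\cap U_x)} N(U_x).
\end{equation}
Since $\op{Fun}(-,\Ca)$ carries this pushout to a pullback of $\infty$-categories, a limit of $F_0|_Q$ is computed as the fibre product
\begin{equation}
\lim_{Q} F_0 \;\simeq\; \lim_{Q'}F_0 \times_{\lim_{Q'\cap U_x}F_0} \lim_{U_x}F_0 .
\end{equation}
Here $\lim_{U_x}F_0 = F_0(x)$ (as $U_x$ has minimum $x$), while $Q'$ and $Q'\cap U_x$ are strictly smaller non-empty up-sets whose limits exist by induction; the pullback then exists because $\Ca$ admits pullbacks. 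This completes the induction, hence the existence of all pointwise right Kan extensions.

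The delicate point is the decomposition step: one must check that the two pieces genuinely cover $N(Q)$ as a pushout along the monomorphism $N(Q'\cap U_x)\hookrightarrow N(U_x)$, and that $\op{Fun}(-,\Ca)$ converts this into the displayed pullback. This is precisely the content underlying \cref{morphisminUPinexactsquare} and the subsequent lemma that a right Kan extension along $\varsigma^P$ carries exact squares to pullback squares: the square with corners $Q, Q', U_x, Q'\cap U_x$ is exactly such an exact square, and the inductive construction above is the assembly of the right Kan extension out of these exact squares. Once existence is established, the formal argument of the first paragraph concludes the proof.
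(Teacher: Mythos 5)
Your proof is correct, and its formal skeleton is the same as the paper's: show that every diagram $\Delta^n \times \Delta^n \to \Ca$ admits a pointwise right Kan extension along $\varsigma^n$, conclude from \cite[Proposition 4.3.2.15]{HTT} that the restriction $\Krt(\tau)_{\op{RKE}} \to \op{Fun}(\Delta^n\times\Delta^n,\Ca)$ is a trivial Kan fibration onto the whole functor category, and then use stability of trivial Kan fibrations under base change. Where you genuinely diverge is in the key step, the existence of the pointwise limits. You correctly identify the comma category at $Q \in \Crt^n$ with the up-set $Q$ itself and then run an induction on $|Q|$: peel off a minimal element $x$, write $N(Q)$ as the pushout $N(Q\setminus\{x\}) \coprod_{N(\{y>x\})} N(P_{x/})$, and assemble the limit of $F_0|_Q$ as a pullback of limits over strictly smaller up-sets, using only that $\Ca$ admits pullbacks together with the standard decomposition-of-limits result from \cite[Section 4.2.3]{HTT}. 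The paper instead observes that every non-empty up-set contains the maximum $(n,n)$, so the comma diagram lifts to the overcategory $\Ca_{/\tau((n,n))}$, which admits finite limits by \cref{finitelimitsinovercategories}; since $Q$ has a terminal object and is thus weakly contractible, \cref{overcategorylimitpreserving} transfers the limit back to $\Ca$. The two routes cost about the same: the paper's buys reuse of its Appendix B machinery (which it needs anyway, e.g.\ in \cref{Kpt1weakcontractible}), while yours is more self-contained and exhibits the right Kan extension explicitly as an iterated pullback along exact squares --- which is precisely the picture that \cref{morphisminUPinexactsquare} and the later argument in \cref{truncatedCartalphaconstruction} rely on, so your induction doubles as a constructive account of what the extension looks like.
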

    \begin{proof}
      Let $ f : \Delta^n \times \Delta^n \to \Ca$ be a diagram, then we want to show that $f$ admits a right Kan-extension along $\sigma^n$. Using \cite[Lemma 4.3.2.13]{HTT}, we need to show that the induced map for every $Q\in \Cart^n$ :
      \begin{equation}
          f_Q : [n] \times [n]_{Q/} \to \Ca
      \end{equation}
      admits a limit. As $\{(n,n)\}$ is the final object of $\op{Cart}^n$ and $[n] \times [n]$, we can realize the morphism :
      \begin{equation}
          f_Q : [n] \times [n]_{Q/} \to \Ca_{/f((n,n))}.
      \end{equation}
      As $\Ca$ admits pullbacks, by \cref{finitelimitsinovercategories} implies that $\Ca_{/f(n,n)}$ admits finite limits. Then $f_Q$ admits a limit. Thus every $f$ admits a right Kan extension.\\

      By \cite[Corollary 4.3.2.15]{HTT} and the explanation above,we see that $\op{Kart}(\tau) \to \op{Fun})\Delta^n \times \Delta^n,\Ca)$ is a trivial Kan fibration. As base change preserves trivial Kan fibrations, we get that $\op{Kart}(\tau)$ is a contractible Kan complex.
    \end{proof}
	We need to give a marked structure on the simplicial sets $\Cart^n$. For this, we need some more notations and maps in the simplicial sets $\Cart^n$.
	
	\begin{notation}
		\begin{enumerate}
			\item We have a map:\[\pi^n : \Crt^n \to [n] \times [n] \] defined as: 
			\[ \pi^n(P):= (\op{min}_{(p,q) \in P}p,\op{min}_{(p,q)\in P}q). \]
			\item $\pi^n \circ \varsigma^n = \op{id}_{[n]\times[n]}$.
			\item Other than $\varsigma^n$, we have two maps: \[ \xi^n,\eta^n: [n] \times [n] \to \Crt^n \] defined by 
			\[\xi^n(p,q):= \varsigma^n(p,0)\wedge \varsigma^n(0,q) ; \eta^n(p,q):= \varsigma^n(p,n)\wedge\varsigma^n(n,q).\]
			\item For $(p,q) \in [n] \times [n]$, we denote \[ \boxplus^n_{(p,q)}:= N(\Crt^n_{\xi^n(p,q)//\eta^n(p,q)}) \]
			\item Denote 
			\[ \boxplus^n:= \cup_{(p,q)\in [n] \times [n]} \boxplus^n_{(p,q)}. \]

		\end{enumerate}
	\end{notation}
	\begin{remark}
		Some remarks on the above notations:
		\begin{enumerate}
			\item  The functors $\xi^n$ and $\eta^n$ satisfy the following property:
			\[ \xi^n(p,q)\le \varsigma^n(p,q) \le \eta^n(p,q) \] 
			\item The definition of $\boxplus^n$ is analog to the definition of $\bx^n$. In the case of $\bx^n$, we have the functors: $p,q: [n] \to \Cpt^n$ defined as $ p(i) =(0,i)$ and $q(i)=(i,n)$. And $ p(i) \le (i,i) \le q(i)$. The functors $p$ and $q$ are analog to the functors $\varsigma^n,\eta^n$ which motivates defining $\boxplus^n_{(p,q)}$ and $\boxplus^n$ in the similar way one defined $\bx^n_i$ and $\bx^n$.
			
		\end{enumerate}
		
	\end{remark}

\begin{lemma}\label{cartninneranodyne}
    The inclusion $\gamma : \boxplus^n_{\op{cart}}:=\bigcup_{0 \le p \le n} \boxplus^n_{(p,n)} \hookrightarrow \Cart^n$ is an inner anodyne.
\end{lemma}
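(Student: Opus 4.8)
The plan is to reduce the statement to the general combinatorial result \cref{partiallyorderedmainprop}, exactly as was done for the commutative analogue \cref{bxncptinneranodyne}, now applied to the poset $P = \Crt^n$ of non-empty up-sets of $[n]\times[n]$. The first step is to record the two relevant families of elements explicitly. For $0 \le p \le n$ the relevant piece is $\boxplus^n_{(p,n)} = N(\Crt^n_{\xi^n(p,n)//\eta^n(p,n)})$, so that $\boxplus^n_{\op{cart}} = \bigcup_{p=0}^{n} N(\Crt^n_{\xi^n(p,n)//\eta^n(p,n)})$ is precisely a union of nerves of intervals in $\Crt^n$, which is the shape to which \cref{partiallyorderedmainprop} applies. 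Unwinding the definitions of $\xi^n$ and $\eta^n$, and recalling that in $\U([n]\times[n])$ the meet $\wedge$ is the union of up-sets while the order is reverse inclusion, one computes $\xi^n(p,n) = \{(i,j) : i \ge p\} \cup \{(i,n) : 0 \le i \le n\}$ and $\eta^n(p,n) = \{(i,n) : i \ge p\}$.

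I would then check the hypotheses of \cref{partiallyorderedmainprop} for the lower family $\xi^n(0,n) , \dots , \xi^n(n,n)$ and the upper family $\eta^n(0,n), \dots , \eta^n(n,n)$, in direct parallel with the data $(0,i) , (i,n)$ used for $\bx^n \subset \Copt^n$. The boundary terms are extreme: $\xi^n(0,n) = [n]\times[n]$ is the minimum of $\Crt^n$ while $\eta^n(n,n) = \{(n,n)\} = \varsigma^n(n,n)$ is its maximum. Since increasing $p$ only shrinks the underlying up-sets, reverse inclusion yields the two chains $\xi^n(0,n) \le \cdots \le \xi^n(n,n)$ and $\eta^n(0,n) \le \cdots \le \eta^n(n,n)$; the relation $\xi^n(p,n) \le \eta^n(p,n)$ is the already-recorded inequality $\xi^n \le \varsigma^n \le \eta^n$; and the interleaving condition $\xi^n(p+1,n) \le \eta^n(p,n)$ holds because $\eta^n(p,n) = \{(i,n) : i \ge p\}$ sits inside the right column, which is contained in $\xi^n(p+1,n)$. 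With these verified, \cref{partiallyorderedmainprop} shows that $\gamma$ is inner anodyne.

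The heart of the argument, and the \emph{main obstacle}, is this verification inside $\Crt^n$. Unlike \cref{bxncptinneranodyne}, where $\Cpt^n$ is a transparent sub-poset of the grid $[n]\times[n]$ and the elements $(0,i)$, $(i,n)$ are immediately comparable, here the ambient poset is the full up-set lattice $\U([n]\times[n])$, whose order, meets and joins are all governed by inclusion of up-sets. Consequently the precise hypotheses of \cref{partiallyorderedmainprop} governing consecutive intervals $\Crt^n_{\xi^n(p,n)//\eta^n(p,n)}$ and their mutual overlaps must be matched by an explicit, if routine, description of which up-sets $Q$ lie in the interval, i.e. satisfy $\eta^n(p,n) \subseteq Q \subseteq \xi^n(p,n)$. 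Once this bookkeeping is carried out, the reduction is formally identical to the commutative case and no further horn-filling arguments are needed.
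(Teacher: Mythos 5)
Your reduction is the same as the paper's: apply \cref{partiallyorderedmainprop} to the poset $P = \Crt^n$ with $p_j = \xi^n(j,n)$ and $q_j = \eta^n(j,n)$ (the paper's proof nominally cites \cref{inneranodynepartiallyorderedsetcond}, but the data it feeds in --- the elements $p_i, q_i$ --- shows that \cref{partiallyorderedmainprop} is what is meant). Your explicit descriptions of $\xi^n(p,n)$ and $\eta^n(p,n)$ as up-sets, the two chains, and the interleaving $\xi^n(p+1,n) \le \eta^n(p,n)$ are all correct and agree with the paper's verification.

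However, there is a genuine gap at the point where you conclude. What \cref{partiallyorderedmainprop} gives you, once the chain and interleaving conditions are checked, is an inner anodyne map
\begin{equation}
\bigcup_{p=0}^n N\bigl(\Crt^n_{\xi^n(p,n)//\eta^n(p,n)}\bigr) \hookrightarrow N\Bigl(\bigcup_{p=0}^n \Crt^n_{\xi^n(p,n)//\eta^n(p,n)}\Bigr),
\end{equation}
whose \emph{target} is the nerve of the union of the intervals, not $N(\Crt^n) = \Cart^n$. To obtain the lemma you must additionally prove the covering statement $\bigcup_{p=0}^n \Crt^n_{\xi^n(p,n)//\eta^n(p,n)} = \Crt^n$. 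This does not follow from your boundary observation that $\xi^n(0,n)$ is the minimum and $\eta^n(n,n)$ the maximum of $\Crt^n$: in a general poset, a chain of interleaved intervals running from the minimum to the maximum need not exhaust the poset (take $\{0,a,b,1\}$ with $a,b$ incomparable and the intervals $[0,a]$, $[a,1]$, which miss $b$). Your last paragraph gestures at the remaining work but mislabels it --- \cref{partiallyorderedmainprop} has no hypotheses about ``mutual overlaps'' of consecutive intervals; what is missing is precisely the identification of the target. The fix is short and is exactly the paper's closing argument, and it is the one place where the up-set structure of $\Crt^n$ actually enters: given a non-empty up-set $Q$, put $p = \pi^n_1(Q) = \min_{(a,b)\in Q} a$. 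Every element of $Q$ has first coordinate $\ge p$, so $Q \subseteq \varsigma^n(p,0) \subseteq \xi^n(p,n)$; and choosing $(p,b) \in Q$, any $(i,n)$ with $i \ge p$ satisfies $(i,n) \ge (p,b)$, hence lies in $Q$, so $\eta^n(p,n) = \varsigma^n(p,n) \subseteq Q$. In the reverse-inclusion ordering this reads $\xi^n(p,n) \le Q \le \eta^n(p,n)$, so every $Q$ lies in the $p$-th interval, which completes the proof.
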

\begin{proof}
We would like to apply \cref{inneranodynepartiallyorderedsetcond} to the elements $p_i = \xi^n(i,n)$ and $q_i = \eta^n(i,n)$ We have $p_{i+1} \le q_i$ as :
\begin{equation}
    \xi^n(i+1,n) \le \sigma^n(i+1,n) \wedge \sigma^n(p,n) = \sigma^n(p,n) = \eta^n(p,n).
\end{equation}
All we need to check $\cup_{p=0}^n \op{Cart}^n_{\xi^n(p,n)//\eta^n(p,n)} = \op{Cart}^n$. Let $Q \in \op{Cart}^n$. Let $p =\pi^n_1(Q)$. Then 
\begin{equation}
    \xi^n(p,n) = \sigma^n(p,0) \wedge \sigma^n(0,n) \le \sigma^n(p,0) \le Q \le \sigma^n(p,n) = \eta^n(p,n).
\end{equation}
Thus the conditions are verified and we get the result.

\end{proof}

So far given an $n$-simplex of $\dd^*_2\Ca_{\E_1,\E_2}$, we consider the simplicial set $\Krt(\tau)$ which are maps $\Cart^n \to \Ca$. But we would like to consier combinatorial simplicial sets mapping to $\dd^*_2\Ca_{\E_1,\E_2}$ and its variants.

	\begin{notation}
\begin{enumerate}
    \item 	Consider the bi-marked simplicial set $(\Delta^n \times \Delta^n, \F'_1:=(\epsilon^2_1 \Delta^{n,n})_1, \F'_2:=(\epsilon^2_2 \Delta^{n,n})_1)$. Let $(\Cart^n, \F)$ be the marked-simplicial set  $(\Cart^n, \F_1:=(\pi^n)^{-1}(\F'_1), \F_2:= (\pi^n)^{-1}(\F'_2))$. \\
		We define $(\Cart^n,\F^{\op{cart}})$ to the $2$-tiled simplicial set where the $2$-tiling is given by $\F_{12}:=\F_1 \star^{\op{cart}} \F_2$.
    \item For any $i \ge -2$, let $(\Ca,\{\E_1,\E_2\},\E_{12}^i)$ be the following $2$-tiled simplicial set where $\E_{12}^i$  is the collection of squares of the form :
    \begin{equation}
        \begin{tikzcd}
            X_0 \arrow[r,"h"] \arrow[d,"w"]&  Y_0 \arrow[d,"f"] \\
            X_1 \arrow[r,"g"] & Y_1
        \end{tikzcd}
    \end{equation}
    where $f,h \in \E_1$ and $g,w \in \E_2$ such taking the pullback of $f$ along $g$ decomposes the square into a diagram of the form
    \begin{equation}
        \begin{tikzcd}
            X_0 \arrow[dr,"d"] \arrow[drr,"h",bend left=40] \arrow[ddr,"w",swap,bend right = 40] & {} & {} \\
            {} & X_0' \arrow[r,"g'"] \arrow[d,"f'"] & Y_0 \arrow[d,"f"] \\
            {} & X_1 \arrow[r,"g"] & Y_1
         \end{tikzcd}
    \end{equation}
    where $d \in \E_1 \cap \E_2$ is $i$-truncated.  Let $\dd^*_2\Ca^i_{\E_1,\E_2}$ be the sub-simplicial set of $\dd^*_2\Ca_{\E_1,\E_2}$ whose $n$ -simplices are $n \times n$ grids where each square of grid admits the above following decomposition.
\end{enumerate}	
	\end{notation}

    \begin{remark}\label{truncationforthmB}
    Some remarks on the above notations :
\begin{enumerate}
    \item By definition, it follows that $\dd^*_2\Ca^{-2}_{\E_1,\E_2} = \dd^*_2\Ca^{\op{cart}}_{\E_1,\E_2}$.
    
    \item  As any square admits a decomposition with an $i$-truncation from the assumption in the conditions of \cref{thmB}, we see that
    \begin{equation}
        \dd^*_2\Ca_{\E_1,\E_2}= \bigcup_{i \ge -2} \dd^*_2\Ca^i_{\E_1,\E_2}.
    \end{equation}
\end{enumerate}
    \end{remark}
	\begin{proposition}\label{truncatedCartalphaconstruction}
Let $\tau : \Delta^n \times \Delta^n \to \Ca$ be a morphism corresponding to a $n$-simplex $\Delta^n \to \dd^*_2\Ca^i_{\E_1,\E_2}$ where $\E_1,\E_2$ is an admissible pair of edges. Let $\tau'$ be an element of $\Krt(\tau)$. Then $\tau'$ induces morphisms $\tau^n_i(\tau')$ and $\tau^{n}_{i-1}(\tau')$ such that the following diagram commutes :
\begin{equation}
    \begin{tikzcd}
        \delta^*_2\delta^{2\bx}_*(\Cart^n,\F^{\op{cart}}) \arrow[d,hookrightarrow] \arrow[r,"\tau^n_{i-1}(\tau')"] & \dd^*_2\Ca^{i-1}_{\E_1,\E_2}\arrow[d,"p^i_{\op{cart}}"]\\
        \dd^*_2\dd^{2+}_*(\Cart^n,\F) \arrow[r,"\tau^n_i(\tau')"] & \dd^*_2\Ca^i_{\E_1,\E_2}.
    \end{tikzcd}
\end{equation}
Morever, we have the following commutative diagram
\begin{equation}
    \begin{tikzcd}
        \Krt(\tau) \arrow[r," \tau^n_{i-1}"] \arrow[dr,"\tau^n_i"] & \op{Fun}( \delta^*_2\delta^{2\bx}_*(\Cart^n,\F^{\op{cart}}),\dd^*_2\Ca^{i-1}_{\E_1,\E_2}) \arrow[d,hookrightarrow]\\
        {} & \op{Fun}(\dd^*_2\dd^{2+}_*(\Cart^n,\F),\dd^*_2\Ca^i_{\E_1,\E_2})
    \end{tikzcd}
\end{equation}
    \end{proposition}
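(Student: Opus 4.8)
The plan is to construct both $\tau^n_i(\tau')$ and $\tau^n_{i-1}(\tau')$ by the single recipe of post-composition with the right Kan extension $\tau'\colon\Cart^n\to\Ca$, and then to reduce the proposition to two membership verifications together with a formal naturality argument. An $m$-simplex of $\dd^*_2\dd^{2+}_*(\Cart^n,\F)$ is a functor $\phi\colon[m]\times[m]\to\Crt^n$ whose direction-$1$ (resp.\ direction-$2$) edges lie in $\F_1$ (resp.\ $\F_2$); I would set $\tau^n_i(\tau')(\phi):=\tau'\circ\phi\colon\Delta^m\times\Delta^m\to\Ca$, and likewise on the Cartesian refinement. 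Because the two horizontal maps of the square are literally the same assignment, read on the smaller and larger source, the commutativity of the square and the compatibility expressed by the triangle become formal once the targets are pinned down. The genuine content is therefore to show that $\tau'\circ\phi$ lands in $\dd^*_2\Ca^i_{\E_1,\E_2}$, and that on the Cartesian refinement it lands in the smaller $\dd^*_2\Ca^{i-1}_{\E_1,\E_2}$.

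First I would isolate the two properties of $\tau'$ that do all the work. Since $\tau'$ lies in $\Krt(\tau)$ it is a right Kan extension along $\varsigma^n$, so it carries exact squares of $\Crt^n$ to pullback squares of $\Ca$ (by the lemma preceding the definition of $\Crt^n$), and by \cref{morphisminUPinexactsquare} every edge $Q\to Q'$ of $\Crt^n$ is a finite composite of exact pullbacks of the elementary edges $\varsigma^n(x)\to\varsigma^n(x)-\{x\}$ with $x\in Q-Q'$. Using the pointwise formula $\tau'(Q)=\lim_{(p,q)\in Q}\tau(p,q)$ I would classify each elementary edge: when both covers of $x$ are present it is sent to the comparison map $d_x\in\E_1\cap\E_2$ of the square of $\tau$ at $x$, and otherwise to a genuine direction-$1$ or direction-$2$ edge of $\tau$. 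A short combinatorial check on up-sets shows that an $\F_1$-edge can only delete elements of the first two kinds (deleting an element on the line $x_1=n$ would force it to already lie in $Q'$, via the element of $Q'$ realizing the second coordinate minimum), so $\tau'$ sends it to a composite of exact pullbacks of $\E_1$-morphisms; admissibility of $\E_1$ (stability under pullback and composition) then places it in $\E_1$, and symmetrically for $\F_2$ and $\E_2$. The same analysis applied to each mixed square of the grid shows that its comparison morphism is a pullback and composite of the $d_x$, hence $i$-truncated by stability of $i$-truncated maps under pullback and composition (Appendix C); this proves $\tau^n_i(\tau')$ takes values in $\dd^*_2\Ca^i_{\E_1,\E_2}$.

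The crux is the Cartesian refinement. On $\delta^*_2\delta^{2\bx}_*(\Cart^n,\F^{\op{cart}})$ the mixed squares of $\phi$ are required to lie in the Cartesian tiling $\F_{12}=\F_1\star^{\op{cart}}\F_2$, that is, to be exact (meet) squares of $\Crt^n$, so $\tau'$ sends them to genuine pullback squares of $\Ca$. The key mechanism I would stress is that forcing a square to be Cartesian lowers the truncation index by exactly one: in the decomposition \cref{cartesiansquaredecomposition} of such a square the comparison morphism produced by $\tau'$ is the relative diagonal of the $i$-truncated map $d_x$, and the relative diagonal of an $i$-truncated morphism is $(i-1)$-truncated (Appendix C). Verifying that $\tau'$ really produces this relative diagonal --- matching the combinatorial Cartesian squares of $\Crt^n$, square by square, with the geometric decomposition \cref{cartesiansquaredecomposition}, rather than merely asserting that a suitable pullback exists --- is where the argument is most delicate, and I expect this identification of the comparison maps with relative diagonals, together with the truncation drop, to be the main obstacle. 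Granting it, every square of $\tau'\circ\phi$ acquires an $(i-1)$-truncated decomposition, and $\tau^n_{i-1}(\tau')$ lands in $\dd^*_2\Ca^{i-1}_{\E_1,\E_2}$.

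It then remains to assemble the diagrams. The square commutes because restricting the larger map $\tau^n_i(\tau')$ along the inclusion $\delta^*_2\delta^{2\bx}_*(\Cart^n,\F^{\op{cart}})\hookrightarrow\dd^*_2\dd^{2+}_*(\Cart^n,\F)$ returns the same composite $\tau'\circ\phi$, which by the previous step already factors as $p^i_{\op{cart}}\circ\tau^n_{i-1}(\tau')$. For the triangle I would run the identical construction with $\tau'$ allowed to vary, using that a $k$-simplex of $\Krt(\tau)$ is a map $\Delta^k\times\Cart^n\to\Ca$ that is pointwise a right Kan extension along $\varsigma^n$; since post-composition is functorial, the assignment $\tau'\mapsto\tau^n_\bullet(\tau')$ upgrades to maps of simplicial sets out of $\Krt(\tau)$, and the triangle records exactly the square-level compatibility, now naturally in $\tau'$. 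This completes the plan; the combinatorial inputs on up-sets and the inner-anodyne statement \cref{cartninneranodyne} are what will then feed this $\alpha$-type datum into \cref{maintechnicalsimplicesthm}.
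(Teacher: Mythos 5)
Your proposal follows essentially the same route as the paper: both define $\tau^n_i(\tau')$ and $\tau^n_{i-1}(\tau')$ by post-composition with $\tau'$ and reduce everything to checking where the markings and tilings go, using \cref{morphisminUPinexactsquare}, the fact that a right Kan extension along $\varsigma^n$ sends exact squares to pullback squares, and admissibility (pullback stability plus right cancellation) exactly as you describe. The step you ``grant'' is resolved in the paper by precisely the mechanism you anticipate: for a Cartesian square $x = y\wedge z$ over $w$ in $\Crt^n$ one factors the comparison map through the intersection up-set $y\vee z$, whose image under $\tau'$ is $i$-truncated by the tiling step, and then \cref{truncationdecreased} (your ``relative diagonal drops the truncation by one'') yields the $(i-1)$-truncation.
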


    \begin{remark}
        The above proposition the map $\tau'$ when applying the functors $\dd^*_2$ and $\dd^{2+}_*$ preserves the truncated conditions in the squares and even for particular squares it decreases the truncation.
    \end{remark}

\begin{proof}
The morphism $\tau'$ does induce a morphism :
\begin{equation}
    \tau'' : \dd^*_2\dd^2_*\Cart^n \to \dd^*_2 \dd^{2}_*\Ca
\end{equation}

We need to show that $\tau''$ induces $\tau^n_i(\tau')$ and $\tau^n_{i-1}(\tau')$ respectively. In particular, this reduces to check $\tau''$ maps the corresponding markings of edges and tilings as desired. 
\begin{itemize}
    \item \textbf{$\tau''$ sends $\F$ to $(\E_1,\E_2)$}: Let $e :x \to y$ be an edge in $\op{Cart}^n$ lying in $\F_1$ this means $\pi^n_1(x)=\pi^n_1(y)=p$. Using \cref{morphisminUPinexactsquare}, we see that $e$ is a finite compositions of exact pullback of morphisms given by :
    \begin{enumerate}
        \item $\sigma^n(n,q) \to \sigma^n(n,q)-(n,q) =\sigma^n(n,q-1)$
        \item and also of the form $\sigma^n(p,q) \to \sigma^n(p,q)-\{(p,q)\}$ where $1 \le p,q \le n-1$.
    \end{enumerate}
   1.  The first one after applying $\tau''$ gives us as a morphism in $\E_1$ as $\tau$ preserves edges. \\
   2.  The second one fits into the following diagram 
    \begin{equation}\label{truncatedcartdiagram}
        \begin{tikzcd}
            \sigma^n(p,q) \arrow[dr,"e'"] & {} & {} \\
            {} & \sigma^n(p,q)-\{(p,q)\} \arrow[r,"e_1"] \arrow[d] & \sigma^n(p,q+1) \arrow[d]\\
            {} & \sigma^n(p+1,q) \arrow[r] & \sigma^n(p+1,q+1).
        \end{tikzcd}
    \end{equation}
    where the square is a pullback square. As $\tau'$ is a RKE of $\tau$, $\tau'$ shall send the exact square to the pullback square (\cref{KrtcontKancomplex}).  Thus in $\Ca$ after aplpying $\tau'$ we see that $\tau''$ sends $e'$ to an element in $\E_1$ as edges in $\E_1$ are admissible and  $\tau'(e_1 \circ e)$ and $\tau'(e_1)$ are in $\E_1$.   \\
    
    The similar argument follows for $\F_2$.
    
    \item \textbf{$\tau''$ sends tiling by $\F$ to tiling by $(\E_1,\E_2)^i$}: Let 
    \begin{equation}
    \begin{tikzcd}
        x_1 \arrow[r] \arrow[d] & y_1 \arrow[d]\\
        x_2 \arrow[r] & y_2
    \end{tikzcd}
    \end{equation}
    be a square in $\Cart^n$ where the horizontal edges are in $\F_1$ and vertical edges are in $\F_2$ and none of the edges are degenerate. Then it follows that $\pi(x_1)=(r_1,r_2)$ where $r_1,r_2 \le n-1$. Let $x_1'$ be the fiber product of the square and we are interested in showing that $\tau'$ applied to  $x_1 \to x_1'$ is $i$-truncated. By \cref{morphisminUPinexactsquare}, we see that $x \to x'$ is composed of exact pullbacks by $\sigma^n(p,q) \to \sigma^n(p,q)-\{(p,q)\}$. \\
    By \cref{truncatedcartdiagram}, we see that $\tau'(\sigma^n(p,q) \to \sigma^n(p,q)-\{(p,q)\})$ is same as $\tau'(\sigma^n(p,q)) \to \tau'(\sigma^n(p,q+1)) \times_{\tau'(\sigma^n(p+1,q+1)} \tau'(\sigma^n(p+1,q))$. By definition of $\dd^*_2\Ca^i_{\E_1,\E_2}$, we see that such an edge is $i$-truncated. As $i$-truncated morphisms are stable under and compositions, we see that $\tau'(x_1 \to x_1')$ is $i$-truncated.
    \item \textbf{$\tau''$ sends $\F^{\op{cart}}$ to $(\E_1,\E_2)^{i-1}$:}

    Let  \begin{equation}
    \begin{tikzcd}
        x \arrow[r] \arrow[d] & y \arrow[d]\\
        z \arrow[r] & w
    \end{tikzcd}
    \end{equation}
    be a square in $\Cart^n$ where the horizontal edges are in $\F_1$ and vertical edges are in $\F_2$ and none of the edges are degenerate. Also further assume that the square is a pullback square. We assume $x_1 = y\wedge z$. We decompose the square in the following diagram : 
    \begin{equation}
        \begin{tikzcd}
            y \wedge z  \arrow[r] \arrow[d] & z \arrow[d] & {} \\
            y \arrow[r] & y \vee z \arrow[dr] & {}\\
            {} & {} & w
        \end{tikzcd}
    \end{equation}

    By \cref{truncationdecreased} and the fact that $\tau'(y \cap z \to w)$ is $i$-truncated from the previous point, we get that the canonical map
    \begin{equation}
        \tau'(x= y\vee z \to y \times_w z)
    \end{equation}
    is $i-1$-truncated. This show that $\tau''$ sends cartesian squares to squares with one less truncation.
\end{itemize}

\end{proof}

Following the ideas in extension along $p_{\op{comm}}$, we would like to have the map 
\begin{equation}
    j : \dd^*_2\dd^{2\bx}_*(\Cart^n,\F^{\op{cart}}) \hookrightarrow \dd^*_2\dd^{2+}_*(\Cart^n,\F)
    \end{equation}
    to be inner anodyne. Instead of showing this,  our aim is to show the existence of the following commutative diagram :
    \begin{equation}\label{inneranodynecommutativecartsquare}
    \begin{tikzcd}
        \boxplus^n_{\op{cart}}\arrow[r,"\epsilon^{\op{cart}}"] \arrow[d,hookrightarrow] \arrow[d,"\gamma"] & \dd^*_2\dd^{2\bx_*}(\Cart^n,\F^{\op{cart}}) \arrow[d,hookrightarrow,"j"] \\
        \Cart^n \arrow[r,"\epsilon"] & \dd^2_*\dd^{2+}_*(\Cart^n,\F)
    \end{tikzcd}
     \end{equation}
          where $\gamma$ is proved to be inner anodyne (\cref{cartninneranodyne}).  This motivates to define the maps $\epsilon$ and $\epsilon^{\op{cart}}$
     \begin{construction}
    For two elements $x,y \in \op{Cart}^n$ and for $p,q \in [n]$, we define two elements:
         \begin{equation}
             \Lambda_p^n(x,y) := (\sigma^n(\pi^n_1(y) \vee p,0) \vee x) \wedge y \quad ; \quad \mu_q^n(x,y) := (\sigma^n(0,q \vee \pi^n_2(y)) \vee x) \wedge y.
         \end{equation}
          \end{construction}
\begin{lemma}\label{lambdamuprop}
    Considering the notation above, we have 
    \begin{enumerate}
        \item $\Lambda_p^n(x,x)=x = \mu_q^n(x,x)$.
        \item $\pi^n(\Lambda^n_p(x,y))= (\pi^n_1(y),\pi^n_2(x))$ amd $\pi^n(\mu^n_q(x,y)) = (\pi^n_1(x),\pi^n_2(y))$.
    \end{enumerate}
\end{lemma}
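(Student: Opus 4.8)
The plan is to unwind the two competing lattice structures in the formulas and then read off each coordinate of $\pi^n$ by a direct minimum computation. Recall that in $\Crt^n=\U([n]\times[n])$ the order is reverse inclusion, so the join and meet there are $Q\vee Q'=Q\cap Q'$ and $Q\wedge Q'=Q\cup Q'$, whereas the join $\pi^n_1(y)\vee p$ occurring inside $\sigma^n(-,0)$ is the join $\max(\pi^n_1(y),p)$ in the totally ordered set $[n]$. Writing $r:=\max(\pi^n_1(y),p)$ and using $\sigma^n(r,0)=\{(c,d):c\ge r\}$, the definition reads
\[ \Lambda^n_p(x,y)=\bigl(\sigma^n(r,0)\cap x\bigr)\cup y, \]
and symmetrically $\mu^n_q(x,y)=(\sigma^n(0,s)\cap x)\cup y$ with $s:=\max(q,\pi^n_2(y))$ and $\sigma^n(0,s)=\{(c,d):d\ge s\}$. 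Throughout I use that $\pi^n_1$ and $\pi^n_2$ are the coordinatewise minima of the first and second entries over an up-set, and that such an up-set contains $(n,\pi^n_2(x))$ and $(\pi^n_1(x),n)$ once it contains a witness for the relevant minimum. The one genuinely non-formal ingredient is the comparability of the arguments: exactly as in the $\bx^n$ construction (where the first argument is always below the second), $\Lambda^n_p$ and $\mu^n_q$ are applied with $x\le y$ in $\Crt^n$, i.e. $x\supseteq y$, which gives $\pi^n(x)\le\pi^n(y)$ coordinatewise. I would make this standing hypothesis explicit at the outset.

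Part (1) requires no computation: setting $y=x$ turns the defining expression into $(\sigma^n(-,0)\vee x)\wedge x$, which equals $x$ by the absorption law $(a\vee x)\wedge x=x$ valid in any lattice; the same applies verbatim to $\mu^n_q(x,x)$.

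For part (2) I would compute the two coordinates of $\pi^n(\Lambda^n_p(x,y))$ separately from $\Lambda^n_p(x,y)=(\sigma^n(r,0)\cap x)\cup y$. For the first coordinate, every point of $\sigma^n(r,0)\cap x$ has first entry $\ge r\ge\pi^n_1(y)$, while $y$ attains first entry $\pi^n_1(y)$; since $\sigma^n(r,0)\cap x$ is non-empty (it contains $(n,d)$ for any $(c,d)\in x$), the minimum over the union is $\pi^n_1(y)$. For the second coordinate, because $(n,\pi^n_2(x))\in x$ and $n\ge r$, the intersection $\sigma^n(r,0)\cap x$ contains a point with second entry $\pi^n_2(x)$ and, being contained in $x$, none with smaller second entry, so its minimal second entry is exactly $\pi^n_2(x)$; as $y$ contributes $\pi^n_2(y)\ge\pi^n_2(x)$ by comparability, the minimum over the union is $\pi^n_2(x)$. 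This gives $\pi^n(\Lambda^n_p(x,y))=(\pi^n_1(y),\pi^n_2(x))$. The claim for $\mu^n_q$ follows by the symmetric argument with the two coordinates interchanged, using $(\pi^n_1(x),n)\in x$ and $\pi^n_1(y)\ge\pi^n_1(x)$, yielding $\pi^n(\mu^n_q(x,y))=(\pi^n_1(x),\pi^n_2(y))$.

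The only place where anything can go wrong is the second-coordinate computation for $\Lambda$ (and, dually, the first for $\mu$): without the comparability $\pi^n(x)\le\pi^n(y)$, the union with $y$ could lower the relevant minimum, and the correct value would then be $\min(\pi^n_2(x),\pi^n_2(y))$ rather than $\pi^n_2(x)$. Thus the main — and essentially only — obstacle is to record the standing assumption $x\le y$ and to confirm that it holds wherever the construction is invoked, which is guaranteed by the same first-argument-below-second-argument convention as in the $\bx^n$ analogue.
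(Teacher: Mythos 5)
Your proof is correct, and it is strictly more complete than the paper's own. For part (1) you and the paper use the same idea — a one-line lattice computation — though your version is the cleaner one: the unconditional absorption identity $(a\vee x)\wedge x=x$. (The paper drops the parentheses and effectively computes $\sigma^n(\pi^n_1(x)\vee p,0)\vee x=x$, which would require $x\subseteq\sigma^n(\pi^n_1(x)\vee p,0)$ and can fail when $p>\pi^n_1(x)$; this is harmless only because the paper never uses $p\neq 0$.) For part (2) the paper's proof is left entirely blank, so your coordinatewise computation of the two minima supplies exactly the content the paper omits, and it is correct. Most valuably, you caught a genuine defect in the statement itself: part (2) is false for arbitrary $x,y\in\Crt^n$. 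For instance with $n=1$, $x=\sigma^1(0,1)$, $y=\sigma^1(1,0)$ one gets $\Lambda^1_0(x,y)=(\sigma^1(1,0)\cap x)\cup y=y$, so $\pi^1(\Lambda^1_0(x,y))=(1,0)$, whereas the asserted value is $(\pi^1_1(y),\pi^1_2(x))=(1,1)$; in general the second coordinate is $\min(\pi^n_2(x),\pi^n_2(y))$, exactly as you observe. Your standing hypothesis $x\le y$ (reverse inclusion, so $y\subseteq x$ and hence $\pi^n(x)\le\pi^n(y)$ coordinatewise) is precisely what repairs this, and it does hold in every invocation in the paper: in \cref{epsilonconstruction} and in the subsequent claim the arguments of $\Lambda^n_0$ and $\mu^n_0$ are vertices $x_b\le x_a$ of a simplex of $\Cart^n$, first argument below the second. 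So your argument agrees with the paper where the paper has content, fills the gap where it does not, and makes explicit a hypothesis the paper should have stated.
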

\begin{proof}
    \begin{enumerate}
        \item $\Lambda^n_p(x,x) = \sigma^n(\pi^n_1(x) \vee p,0) \vee x \wedge x = \sigma^n(\pi^n_1(x) \vee p,0) \vee x = x$. The same argument holds for $\mu^n$.
        \item  
    \end{enumerate}
\end{proof}
\begin{construction}\label{epsilonconstruction}
    We define 
    \begin{equation}
        \epsilon_n : \Cart^n \to \dd^*_2 \dd^{2+}_*(\Cart^n,\F)
    \end{equation}
    as follows: \\
    Let $ r : \Delta^m \to \op{Cart}^n$ which maps the vertices to $x_i$ for all $ 0 \le i \le r$ Then 
    \begin{equation}
        \epsilon(r) : \Delta^m \times \Delta^m \to \Cart^n \quad (a,b) \mapsto \begin{cases}
             \Lambda^n_0(x_b,x_a) \quad a \ge b \\
             \mu^n_0(x_a,x_b) \quad a \le b \\
        \end{cases}
    \end{equation}
By \cref{lambdamuprop}, we see that the map $\epsilon_n$ maps edges $\F_1,\F_2$ to $\E_1$ and $\E_2$ respectively.
    \end{construction}

\begin{remark}
    In particular, for $m=1$ and for any edge $x \to y \in \Cart^n$, $\epsilon_n$ maps such an edge to the following square :
    \begin{equation}
        \begin{tikzcd}
            x \arrow[r] \arrow[d] & \mu^n(x,y) \arrow[d] \\
            \Lambda^n(x,y) \arrow[r] & y.
        \end{tikzcd}
    \end{equation}
\end{remark}

\begin{notation}
Recall that $\boxplus^n_{\op{cart}}$ be the simplicial set: 
\begin{equation}
    \bigcup_{0 \le p \le n}\boxplus^n_{(p,n)}
\end{equation}
   
\end{notation}
\begin{claim}
    The map $\epsilon_n$ induces :
    \begin{equation}
        \epsilon^{\op{cart}}_n : \boxplus^n_{\op{cart}} \to \dd^*_2\dd^{2\bx}_*(\Cart^n, \F^{\op{cart}}).
    \end{equation}
\end{claim}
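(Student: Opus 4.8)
The plan is to show that the restriction of $\epsilon_n$ to $\boxplus^n_{\op{cart}}$ factors through the inclusion $j\colon \dd^*_2\dd^{2\bx}_*(\Cart^n,\F^{\op{cart}})\hookrightarrow \dd^*_2\dd^{2+}_*(\Cart^n,\F)$ appearing in \cref{inneranodynecommutativecartsquare}. Since $\epsilon_n$ already lands in $\dd^*_2\dd^{2+}_*(\Cart^n,\F)$ — the edge markings are handled by \cref{lambdamuprop}(2), which computes $\pi^n$ of every $\Lambda^n_0$- and $\mu^n_0$-corner and thereby places each edge of a grid $\epsilon_n(r)$ in $\F_1$ or $\F_2$ — the only remaining point is the tiling condition: every square $(1,1)$-face of an image grid must lie in $\F^{\op{cart}}=\F_1\star^{\op{cart}}\F_2$, i.e. must be a Cartesian square in $\Cart^n=N(\Crt^n)$.

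First I would reduce to a statement about a single interval. A simplex of $\boxplus^n_{\op{cart}}=\bigcup_{0\le p\le n}\boxplus^n_{(p,n)}$ lies in some $\boxplus^n_{(p,n)}=N(\Crt^n_{\xi^n(p,n)//\eta^n(p,n)})$, hence is a chain $x_0\le\cdots\le x_m$ in the interval $[\xi^n(p,n),\eta^n(p,n)]$; as up-sets this means $\eta^n(p,n)\subseteq x_m\subseteq\cdots\subseteq x_0\subseteq\xi^n(p,n)$, where $\eta^n(p,n)=\{(i,n):i\ge p\}$ and $\xi^n(p,n)=\{(i,j):i\ge p\}\cup\{(i,j):j=n\}$. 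Its image $\epsilon_n(r)\colon\Delta^m\times\Delta^m\to\Cart^n$ has corner values given by \cref{epsilonconstruction}, and each $(1,1)$-face is the square supported on a pair of edges $a_0\le a_1$ and $b_0\le b_1$. I would first dispose of the squares whose $\pi^n$-shadow in $[n]\times[n]$ is degenerate (these reduce to lower-dimensional data and carry no genuine tiling constraint), and then concentrate on the remaining squares.

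The decisive reduction is the poset criterion: since $\Crt^n=\U([n]\times[n])$ is a partially ordered set whose meet is $\wedge=\cup$ of up-sets, a commutative square in $N(\Crt^n)$ with initial vertex $A$ and intermediate vertices $B,C$ is Cartesian if and only if $A=B\cup C$. I would evaluate the corners through the explicit descriptions $\Lambda^n_0(x,y)=(\varsigma^n(\pi^n_1(y),0)\cap x)\cup y$ and $\mu^n_0(x,y)=(\varsigma^n(0,\pi^n_2(y))\cap x)\cup y$, together with \cref{lambdamuprop}. The model is the edge case $m=1$, where the square is $x\to\mu^n_0(x,y)$, $\Lambda^n_0(x,y)\to y$, and one computes $\mu^n_0(x,y)\cup\Lambda^n_0(x,y)=\big(x\cap(\{i\ge\pi^n_1(y)\}\cup\{j\ge\pi^n_2(y)\})\big)\cup y$. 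As $y\subseteq x$, this equals $x$ exactly when $x$ contains no element strictly to the lower-left of $(\pi^n_1(y),\pi^n_2(y))$; and such an element $(i,j)$ would satisfy $i<\pi^n_1(y)\le p$, forcing $j=n$ by $x\subseteq\xi^n(p,n)$, contradicting $j<\pi^n_2(y)\le n$. Thus the defining inclusion into $\xi^n(p,n)$ is precisely what makes the edge square Cartesian.

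For a general $(1,1)$-face I would run the same union-of-up-sets computation after splitting into the finitely many configurations of $a_0,a_1,b_0,b_1$, which determine the $\Lambda^n_0/\mu^n_0$ pattern of the four corners; in each case the Cartesian equality $A=B\cup C$ collapses, via \cref{lambdamuprop}, to the same combinatorial statement that $[\xi^n(p,n),\eta^n(p,n)]$ has no point strictly lower-left of the relevant corner. Once every square is verified Cartesian, the factorization through $j$ defines $\epsilon^{\op{cart}}_n$, compatibly with the diagram \cref{inneranodynecommutativecartsquare} whose left vertical $\gamma$ is inner anodyne by \cref{cartninneranodyne}. I expect the main obstacle to be exactly this middle step: because $\epsilon_n$ mixes $\Lambda^n_0$ and $\mu^n_0$ across a grid, the corner formulas for off-diagonal faces are combinatorially delicate, and the Cartesian condition is a genuine equality of meets in $\U([n]\times[n])$ rather than a formal consequence — so the entire argument must route through the inclusion $x_i\subseteq\xi^n(p,n)$, which is the structural reason the cartesianization stays Cartesian on $\boxplus^n_{\op{cart}}$.
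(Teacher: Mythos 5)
Your proposal is correct, and for the diagonal squares it is in substance the paper's own argument: the paper's case $a=b$ proves $\Lambda^n_0(x,y)\wedge\mu^n_0(x,y)=x$ by distributivity of the lattice $\U([n]\times[n])$ together with $\pi^n(y)\le(p,n)$, which is exactly your element-wise observation that $x$ can contain no point strictly lower-left of $\pi^n(y)$. Where you genuinely diverge is the off-diagonal squares: the paper (case $a<b$, all four corners of the form $\Lambda^n_0$) pastes a $3\times 3$ diagram and asserts that each small square is a pullback, whereas you propose to rerun the union-of-up-sets computation directly. Your insistence that every case must route through the inclusion $x_i\subseteq\xi^n(p,n)$ is the right instinct, and it is in fact indispensable for the paper's pasting step as well, since the small squares there are \emph{not} pullbacks for an arbitrary chain in $\Crt^n$: in $\U([2]\times[2])$ take $x_a$ the up-set of $(1,1)$, take $x_{a+1}=x_b$ the up-set generated by $(1,2)$ and $(2,1)$, and take $x_{b+1}$ the up-set of $(2,2)$; then $\Lambda^2_0(x_a,x_{b+1})\cup\Lambda^2_0(x_{a+1},x_b)=x_b\neq x_a=\Lambda^2_0(x_a,x_b)$, so the square is not Cartesian --- and consistently, this chain lies in no $\boxplus^2_{(p,2)}$. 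So your route buys transparency about where the hypothesis enters (the paper's pasting buys brevity but hides the same input), at the cost of a finite case analysis. One point to tighten before calling it a proof: in the off-diagonal case the Cartesian identity does not literally collapse to ``no point strictly lower-left of the relevant corner''; what one needs is that any $(i,j)\in x_{a_0}$ with $\pi^n_1(x_{b_0})\le i<\pi^n_1(x_{b_1})$ automatically satisfies $i<p$, hence $j=n$ by $x_{a_0}\subseteq\xi^n(p,n)$, hence lies in $x_{b_0}$ because $x_{b_0}$ contains a point with first coordinate $\pi^n_1(x_{b_0})\le i$ (necessarily on the top row $j=n$) and is an up-set. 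This is the same mechanism as your model case but a different statement, so it should be written out rather than cited as identical; with that done, your argument is complete and, per \cref{lambdamuprop} for the markings, the factorization through $\dd^*_2\dd^{2\bx}_*(\Cart^n,\F^{\op{cart}})$ follows.
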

\begin{proof}
Let $\tau$ be an $n$-simplex of $\boxplus^n_{\op{cart}}$. We want to show that for any map $\tau':\Delta^1 \times \Delta^1 \to \Delta^m$, the map $\epsilon^{\op{cart}}_n $sends this square to a pullback square. Let the vertices of $\tau'$ given by $(a,b), (a+1,b),(a,b+1),(a+1,b+1)$.

\begin{enumerate}
    \item $\textbf{a=b:}$ Let $\Lambda^n_0(x_a,x_a)=x$ and $\Lambda^n$ and $\Lambda^n_0(x_{a+1},x_{a+1})=y$ (\cref{lambdamuprop}),we want to show that :
    \[\Lambda^n_0(x,y) \times_{y} \mu^n_0(x,y) =x. \]
    By \cref{cartdiagrampullbacksquare}, we see that $\Lambda^n_a(x,y) \times_{y} \mu^n_0(x,y) \to \Lambda^n_0(x,y) \wedge \mu^n_0(x,y)$ is an isomorphism as $ y \to y \wedge y = y$ is an isomorphism. Thus we are reduced to show :
    \begin{equation}
        \Lambda^n_0(x,y) \wedge \mu^n_0(x,y) = x
    \end{equation}
     We know that $\xi^n(p,n) \le x_a \le x_b \le \eta^n(p,n)$. This means $\pi^n(y) \le (p,n)$.

     This gives that
    \begin{align*}
        \Lambda^n_0(x,y) \wedge \mu^n_0(x,y)
        \\ = (\sigma^n(\pi^n_1(y),0) \vee x \wedge y) \wedge (\sigma^n(0,\pi^n_2(y))\vee x \wedge y) 
        \\= (\sigma^n(\pi^n_1(y),0) \wedge (\sigma^n(0,\pi^n_2(y))) \vee x \wedge y 
        \\ \le x \vee x \wedge y = x
    \end{align*}

    As by property of pullback we already had $\Lambda^n_0(x,y) \wedge \mu^n_0(x,y) \ge x$. This proves that the square is pullback. \\

    \item \textbf{$a < b$ :} We need to show that the square :
    \begin{equation}
        \begin{tikzcd}
            \Lambda^n_0(x_a,x_b) \arrow[r] \arrow[d] & \Lambda^n_0(x_a,x_{b+1}) \arrow[d]\\
            \Lambda^n_{0}(x_{a+1},x_b) \arrow[r] & \Lambda^n_{0}(x_{a+1},x_{b+1})
        \end{tikzcd}
    \end{equation}
    is a pullback square. Let $m = \pi^n_1(x_b)$ and $m' = \pi^n_1(x_{b+1})$. The following pullback condition follows from the following diagram :
    \begin{equation}
        \begin{tikzcd}
            \sigma^n(m,0) \vee x_a \wedge x_b \arrow[r] \arrow[d] & \sigma^n(m,0) \vee x_{a} \wedge x_{b+1} \arrow[r] \arrow[d] & \sigma^n(m',0) \vee x_a \wedge x_{b+1} \arrow[d] \\
            \sigma^n(m,0) \vee  x_{a+1} \wedge x_b \arrow[r] \arrow[d] & \sigma^n(m,0) \vee x_{a+1} \wedge x_{b+1} \arrow[r] \arrow[d] & \sigma^n(m',0) \vee x_{a+1} \wedge x_{b+1} \arrow[d] \\
            \sigma^n(m,0) \vee x_{a+1} \wedge x_b \arrow[r] & \sigma^n(m,0) \vee x_{a+1} \wedge x_{b+1} \arrow[r] & *
        \end{tikzcd}
    \end{equation}
    where each of the smaller squares are pullback squares hence the outer square is which proves the desired claim.
    \item  For the other case, it same argument with $\mu^n(x,y)$.
    \end{enumerate}

\end{proof}

\begin{remark}
    Let us analyze \cref{truncatedCartalphaconstruction} and \cref{inneranodynecommutativecartsquare} combined for $n=1$. We have the following diagram corresponding to a one simplex $\tau$ of $\dd^*_2\Ca^i_{\E_1,\E_2}$ of the form :
    \begin{equation}\label{exampleofcart1}
            \begin{tikzcd}
        x \arrow[r] \arrow[d] & y \arrow[d]\\
        z \arrow[r] & w.
    \end{tikzcd}
    \end{equation}
    as follows :
    \begin{equation}
          \begin{tikzcd}
        {} & \boxplus^1_{\op{cart}}\arrow[r,"\epsilon^{\op{cart}}"] \arrow[d,hookrightarrow] \arrow[d,"\gamma"] & \dd^*_2\dd^{2\bx_*}(\Cart^n,\F^{\op{cart}}) \arrow[d,hookrightarrow,"j"] \arrow[r,"\tau^n_{i-1}(\tau')"] & \dd^*_2\Ca^{i-1}_{\E_1,\E_2} \arrow[d,"p^{i-1,i}_{\op{cart}}"] \\
       \Delta^1 \arrow[r] &\Cart^1 \arrow[r,"\epsilon"] & \dd^2_*\dd^{2+}_*(\Cart^n,\F) \arrow[r,"\tau^n_i(\tau')"] & \dd^*_2\Ca^i_{\E_1,\E_2}
    \end{tikzcd}
    \end{equation}
    Here $\tau'$ is of the form :
    \begin{equation}
    \begin{tikzcd}
        x \arrow[dr] & {} & {}\\
        {} & x' \arrow[d] \arrow[r] & y \arrow[d] \\
        {} &  z \arrow[r] & w
    \end{tikzcd}
\end{equation}
where $x \to x'$ is in $\E_1 \cap \E_2$ and $x \to x'$ is $i$-truncated. 
\begin{itemize}
    \item The bottom arrow corresponds to the square \cref{exampleofcart1}.
    \item In the top row, note that $\boxplus^1_{\op{cart}}$ consists of two simplicial sets $\boxplus^1_{0,1}$ amd $\boxplus^1_{1,1}$. 
    \begin{enumerate}
        \item $\boxplus^1_{0,1}$ consists of edge of the form $\sigma^1(0,0) \to \sigma^(0,1) \wedge \sigma^(1,0)$. Traversing through the top arrow, this edge maps to the square :
        \begin{equation}
            \begin{tikzcd}
                x \arrow[r,"\op{id}"] \arrow[d,"\op{id}"] & x \arrow[d] \\
                x \arrow[r] & x'
            \end{tikzcd}
        \end{equation}
        Note that this makes sense as $x \to x \times_x' x$ is $i-1$-truncated. 
        \item $\boxplus^1_{(1,1)}$ consists an edge of the form $\sigma^1(0,1) \wedge \sigma^(1,0) \to \sigma^1(1,1)$. The top arrow sends this edge to the square :
        \begin{equation}
                \begin{tikzcd}
        x' \arrow[r] \arrow[d] & y \arrow[d]\\
        z \arrow[r] & w.
    \end{tikzcd}
        \end{equation}
    \end{enumerate}
    \item On composing with $g^i_{\op{cart}}$, we see tha above two edges corresponds to $\Lambda^2_1 \to \D$. Using the fact that $\gamma$ is an inner anodyne (\cref{cartninneranodyne}), we see that this allows us to map the $1$-simplex $\tau$ in $\D$. Thus the above simplicial sets do recover the same idea explained in the beginning of the section. 
\end{itemize}
    
\end{remark} 

\subsection{Proof of Theorem B.}
We restate the theorem from the introduction :

\begin{theorem}[Theorem B : Extension along $p_{\op{cart}}$]\label{thmB}
    		Let $\Ca$ be an $\infty$-category and $\E_1,\E_2$ be a collection of edges in $\Ca$ with the following conditions:
  \begin{enumerate}
\item Every morphism $f \in \E_1\cap \E_2$ is $k$-truncated for $k \ge -2$.

\item The edges $\E_1$ and $\E_2$ are admissible.

  \end{enumerate}
  Then for any $\infty$-category $\D$, there exists a solution to the lifting problem:
\begin{equation}
\begin{tikzcd}
    \dd^*_2\Ca^{\op{cart}}_{\E_1,\E_2} \arrow[d,"p_{\op{cart}}",swap] \arrow[r,"g_{\op{cart}}"] &\D\\
    \dd^*_2\Ca_{\E_1,\E_2} \arrow[ur,"g'_{\op{cart}}",swap,dotted] & {}.
    \end{tikzcd}
\end{equation}
    
  \end{theorem}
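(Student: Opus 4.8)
The plan is to reduce \cref{thmB} to an induction on the truncation level $i$ and to invoke the gluing result \cref{maintechnicalsimplicesthm} at each stage, in close analogy with the proof of \cref{thmA}. By \cref{truncationforthmB} we have $\dd^*_2\Ca^{-2}_{\E_1,\E_2}=\dd^*_2\Ca^{\op{cart}}_{\E_1,\E_2}$ and $\dd^*_2\Ca_{\E_1,\E_2}=\bigcup_{i\ge -2}\dd^*_2\Ca^i_{\E_1,\E_2}$, the latter a filtered union of subsimplicial sets. Starting from $g^{-2}_{\op{cart}}:=g_{\op{cart}}$, I would construct for each $i\ge -1$ an extension $g^i_{\op{cart}}:\dd^*_2\Ca^i_{\E_1,\E_2}\to\D$ of $g^{i-1}_{\op{cart}}$ along the inclusion $p^{i-1,i}_{\op{cart}}:\dd^*_2\Ca^{i-1}_{\E_1,\E_2}\hookrightarrow\dd^*_2\Ca^i_{\E_1,\E_2}$, and then assemble $g'_{\op{cart}}:=\bigcup_i g^i_{\op{cart}}$ on the filtered colimit. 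For the inductive step I apply \cref{maintechnicalsimplicesthm} with $K=\dd^*_2\Ca^i_{\E_1,\E_2}$, $K'=\dd^*_2\Ca^{i-1}_{\E_1,\E_2}$, the target $\infty$-category $\D$ playing the role of $\Ca$, the inclusion $p^{i-1,i}_{\op{cart}}$ in the place of $i$, and $f'=g^{i-1}_{\op{cart}}$.

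The functor $\N\in(\sset)^{(\Delta_{/K})^{op}}$ is defined on an $n$-simplex $\tau$ (a grid $\Delta^n\times\Delta^n\to\Ca$) by first forming the $\infty$-category of cartesianizations $\Krt(\tau)$, which is a contractible Kan complex by \cref{KrtcontKancomplex}. Each $\tau'\in\Krt(\tau)$ yields, via \cref{truncatedCartalphaconstruction}, a map $\tau^n_{i-1}(\tau'):\dd^*_2\dd^{2\bx}_*(\Cart^n,\F^{\op{cart}})\to\dd^*_2\Ca^{i-1}_{\E_1,\E_2}$; precomposing with $\epsilon^{\op{cart}}_n$ (\cref{epsilonconstruction}) and postcomposing with the inductive datum $g^{i-1}_{\op{cart}}$ produces a map $\boxplus^n_{\op{cart}}\to\D$, functorial in $\tau'$. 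I then set
\[ \N(\tau):=\Krt(\tau)\times_{\op{Fun}(\boxplus^n_{\op{cart}},\D)}\op{Fun}(\Cart^n,\D). \]
Since $\gamma:\boxplus^n_{\op{cart}}\hookrightarrow\Cart^n$ is inner anodyne (\cref{cartninneranodyne}), the restriction $\op{Fun}(\Cart^n,\D)\to\op{Fun}(\boxplus^n_{\op{cart}},\D)$ is a trivial fibration by \cite[Corollary 2.3.2.5]{HTT}, hence so is the projection $\N(\tau)\to\Krt(\tau)$; as $\Krt(\tau)$ is weakly contractible, $\N(\tau)$ is weakly contractible, giving hypothesis (1). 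The transformation $\alpha$ is obtained by composing the projection $\N(\tau)\to\op{Fun}(\Cart^n,\D)$ with restriction along the diagonal $\Delta^n\to\Cart^n$; exactly as in \cref{thmA} one checks that every edge of $\Krt(\tau)$ is sent to an equivalence, so $\alpha_n$ lands in $\op{Fun}^{\simeq}(\Delta^n,\D)$ and the pieces assemble into $\alpha:\N\to\op{Map}[K,\D]$.

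For hypothesis (2) I use that when $\tau$ lies in $K'=\dd^*_2\Ca^{i-1}_{\E_1,\E_2}$ its squares are already $(i-1)$-truncated, so the right Kan extension of $\tau$ along $\varsigma^n$ gives a distinguished $\tau'\in\Krt(\tau)$. Because the diagonal $\Delta^n\to\Cart^n$ factors through $\boxplus^n_{\op{cart}}$ (as in the $n=1$ picture of \cref{inneranodynecommutativecartsquare}), the value of the construction on the diagonal is already determined before the lift through $\gamma$, and it recovers $g^{i-1}_{\op{cart}}(\tau)=f'(\tau)$. These distinguished choices are natural in $\tau$ and thus provide the required $\omega\in\Gamma((p^{i-1,i}_{\op{cart}})^*\N)_0$ with $\Gamma((p^{i-1,i}_{\op{cart}})^*\alpha)(\omega)=g^{i-1}_{\op{cart}}$. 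With both hypotheses verified, \cref{maintechnicalsimplicesthm} produces $g^i_{\op{cart}}$, closing the induction.

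The crux, and the step I expect to be the main obstacle, is the construction of $\alpha$ and the verification that it genuinely factors through the inductive datum $g^{i-1}_{\op{cart}}$. This hinges on the truncation-decreasing property of \cref{truncatedCartalphaconstruction}: the cartesianization $\tau'$ sends the Cartesian tiling $\F^{\op{cart}}$ to squares of truncation level $i-1$, one strictly below $\tau$, which is precisely what places $\tau^n_{i-1}(\tau')$ inside $K'$ where $f'$ is defined. Ensuring this is natural simultaneously in $\tau'$ and in the simplicial degree $n$, so that the pointwise maps glue to a single transformation $\N\to\op{Map}[K,\D]$, together with checking that the filtered family $\{g^i_{\op{cart}}\}$ is mutually compatible and hence descends to a well-defined $g'_{\op{cart}}$ on $\dd^*_2\Ca_{\E_1,\E_2}$, is where the bulk of the combinatorial bookkeeping lies.
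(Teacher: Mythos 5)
Your overall architecture matches the paper's: reduce via \cref{truncationforthmB} to an induction on the truncation level, define $\N(\tau)=\Krt(\tau)\times_{\op{Fun}(\boxplus^n_{\op{cart}},\D)}\op{Fun}(\Cart^n,\D)$, get weak contractibility from \cref{cartninneranodyne} and \cref{KrtcontKancomplex}, and define $\alpha_n$ by evaluating the second coordinate on the diagonal. (One structural difference: you anchor the compatibility hypothesis at level $i-1$, taking $K'=\dd^*_2\Ca^{i-1}_{\E_1,\E_2}$ and $f'=g^{i-1}_{\op{cart}}$, whereas the paper takes $K'=\dd^*_2\Ca^{-2}_{\E_1,\E_2}$ and $f'=g_{\op{cart}}$ at every stage, using $g^{i-1}_{\op{cart}}$ only inside the construction of $\alpha$. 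Your anchoring is in principle viable, and it has the advantage of relating consecutive stages directly, which helps when assembling $g'_{\op{cart}}=\bigcup_i g^i_{\op{cart}}$.)

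However, your verification of hypothesis (2) of \cref{maintechnicalsimplicesthm} contains a genuine gap: the claim that the diagonal $\Delta^n\to\Cart^n$ factors through $\boxplus^n_{\op{cart}}$ is false, already for $n=1$. Writing $b_{pq}=\varsigma^1(p,q)$ and $P=b_{01}\wedge b_{10}$, the two pieces of $\boxplus^1_{\op{cart}}$ contain respectively $\{b_{00},P,b_{01}\}$ and $\{P,b_{01},b_{10},b_{11}\}$, so the diagonal edge $b_{00}\to b_{11}$ lies in neither piece; in general only the single vertex $\varsigma^n(p,p)$ of the diagonal lies in $\boxplus^n_{(p,n)}$, so no diagonal edge belongs to $\boxplus^n_{\op{cart}}$. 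This is not a peripheral point: the entire reason the construction passes through the trivial fibration $\op{Fun}(\Cart^n,\D)\to\op{Fun}(\boxplus^n_{\op{cart}},\D)$ is precisely that the diagonal is \emph{not} contained in $\boxplus^n_{\op{cart}}$ — if it were, the inner anodyne $\gamma$ and the fiber product $\N(\tau)$ would be unnecessary. Consequently, "the value of the construction on the diagonal" is \emph{not} determined by the data on $\boxplus^n_{\op{cart}}$, and you cannot conclude that an arbitrary point of $\N(\tau)$ over your distinguished $\tau'$ evaluates to $g^{i-1}_{\op{cart}}(\tau)$. To produce a valid $\omega$ you must exhibit an explicit second coordinate, i.e.\ a full map $\Cart^n\to\D$ restricting correctly on $\boxplus^n_{\op{cart}}$. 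This is what the paper does via $\alpha^2_n=(g^{-2}_{\op{cart}}\circ\epsilon_n)\circ\tau''$, using that for a Cartesian $\tau$ the induced map on all of $\dd^*_2\dd^{2+}_*(\Cart^n,\F)$ lands in $\dd^*_2\Ca^{-2}_{\E_1,\E_2}$. In your setting the analogous fix is available: for $\tau\in\dd^*_2\Ca^{i-1}_{\E_1,\E_2}$, \cref{truncatedCartalphaconstruction} applied at level $i-1$ shows the full induced map lands in $\dd^*_2\Ca^{i-1}_{\E_1,\E_2}$, so $g^{i-1}_{\op{cart}}\circ\tau''\circ\epsilon_n$ is a point of $\op{Fun}(\Cart^n,\D)$; one then checks that $\epsilon_n$ composed with the diagonal equals $\varsigma^n$ and $\tau'\circ\varsigma^n=\tau$, so evaluation on the diagonal recovers $g^{i-1}_{\op{cart}}(\tau)$. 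This construction, not the factorization claim, is what closes the argument, and it is missing from your proposal.
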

\begin{proof}
 Similar to the proof of \cref{thmA}, we verify the conditions of \cref{maintechnicalsimplicesthm} to get desired extension $g_{\op{cart}}'$. Before doing that, we need to reduce the problem to setting of specific truncation of morphisms. 

 Let $p^i_{\op{cart}}$ be the canoncial inclusion $\dd^*_2\Ca^{-2}_{\E_1,\E_2} \to \dd^*_2\Ca^i_{\E_1,\E_2}$. By \cref{truncationforthmB}, it follows that we are reduced to construct the existence of the dotted arrow for the following diagram :
 \begin{equation}
     \begin{tikzcd}
         \dd^*_2\Ca^{-2}_{\E_1,\E_2} \arrow[r,"g^{-2}_{\op{cart}}"] \arrow[d,"p^{i}_{\op{cart}}",swap] & \D \\
         \dd^*_2\Ca^i_{\E_1,\E_2} \arrow[ur,dotted, "g^i_{\op{cart}}",swap] & {}
     \end{tikzcd}
 \end{equation}
 Solving this extension problem defines $g'_{\op{cart}}:= \bigcup_{i\ge -2} g^i_{\op{cart}}$. We proceed by induction on $i$. Thus by induction we consider, that the above diagram decomposes into the following diagram:
 \begin{equation}
     \begin{tikzcd}
         \dd^*_2\Ca^{-2}_{\E_1,\E_2} \arrow[r, "g^{-2}_{\op{cart}}"] \arrow[d,"p^{i-1,2}_{\op{cart}}",swap] & \D \\
         \dd^*_2\Ca^{i-1}_{\E_1,\E_2} \arrow[ur,"g^{i-1}_{\op{cart}}",swap] \arrow[d,"p^{i-1,i}_{\op{cart}}",swap] & {} \\
         \dd^*_2\Ca^i_{\E_1,\E_2} \arrow[uur,bend right =80 ,"g^i_{\op{cart}}",dotted] & {}
     \end{tikzcd}
 \end{equation}
 We shall now verify the conditions of \cref{maintechnicalsimplicesthm} now for the above lifting problem.

 \begin{itemize}
     \item  Let $\tau$ be an $n$-simplex of $\dd^*_2\Ca^i_{\E_1,\E_2}$ which is a simplex of the form $\tau : \Delta^n \times \Delta^n \to \Ca$. Define $\alpha'_n$ as the following chain of compositions :
     \begin{equation}
         \alpha'_n : \op{Kart}(\tau) \xrightarrow{\tau^n_{i-1}} \op{Fun}( \delta^*_2\delta^{2\bx}_*(\Cart^n,\F^{\op{cart}}),X) \xrightarrow{h} \op{Fun}(\boxplus^n_{\op{cart}},\D) 
     \end{equation}
    where : 
    \begin{enumerate}
        \item $X =,\dd^*_2\Ca^{i-1}_{\E_1,\E_2}$.
        \item $h :\op{Fun}(\delta^*_2\delta^{2\bx}_*(\Cart^n,\F^{\op{cart}}),X) \xrightarrow{\epsilon^{\op{cart}}_n} \op{Fun}(\boxplus^n_{\op{cart}},X) \xrightarrow{g^{i-1}_{\op{cart}}} \op{Fun}(\boxplus^n_{\op{cart}},\D)$
         \end{enumerate}
Define $\N(\tau) : = \Krt(\tau) \times_{\op{Fun})\boxplus^n_{\op{cart}},\D)} \op{Fun}(\Cart^n,\D)$. Then let $\alpha_n$ be the morphism :
\begin{equation}
    \alpha_n : \N(\tau) \to \op{Fun}(\Cart^n,\D) \xrightarrow{i \circ \sigma^n} \op{Fun}(\Delta^n,\D)
\end{equation}
where $i : \Delta^n \to \Delta^n \times \Delta^n$ is the diagonal map. By \cref{cartninneranodyne}, we see that the restriction map $\op{Fun}(\Cart^n,\D) \to \op{Fun}(\boxplus^n_{\op{cart}},\D)$ is a trivial Kan vibration. Hence the base change map $\N(\tau) \to \Krt(\tau)$ is a trivial Kan vibration. This implies that $\N(\tau)$ is a contractible Kan complex hence it is weakly contractible. 
         
    \item As $\Krt(\tau)$ is a contractible Kan complex(\cref{KrtcontKancomplex}), we see that $\alpha_n$ can be realized as a map :
    \begin{equation}
        \alpha_n : \N(\tau) \to \op{Fun}^{\simeq}(\Delta^n,\D)
    \end{equation}
    Functoriality of these combinatorial simplicial sets gives us the map :
    \begin{equation}
        \alpha : \N \to \op{Map}[\dd^*_2\Ca^i_{\E_1,\E_2},\D]
    \end{equation}
    \item  Suppose $\tau$ comes from an $n$-simplex $\tau'$ of $\dd^*_2\Ca^{-2}_{\E_1,\E_2}$. Let $\omega^1_n = \tau'$ which is an element of $\Krt(\tau)$ as pullback squares Kan extended yield the same morphisms. The element $\omega_n$ along with the \cref{truncatedCartalphaconstruction} and \cref{inneranodynecommutativecartsquare}, we have the following chain of morphisms :

    \begin{equation}
       \alpha^2_n: Krt(\tau) \to \op{Fun}(\dd^*_2\dd^{2+}_*(\Cart^n,\F),\dd^*_2\Ca^{-2}_{\E_1,\E_2}) \xrightarrow{g^{-2}_{\op{cart}} \circ \epsilon_n} \op{Fun}(\Cart^n,\D)
    \end{equation}
One checks following the construction of maps that the pair $(\omega^1_n,\alpha^2_n(\omega_1)) \in \N(\tau)$ gets mapped to $g^{-2}_{\op{cart}}(\tau')$ via $\alpha_n$. \\
The collection $\omega_n$ defines an element $\omega \in \Gamma(p^{i*}_{\op{cart}}\N)_0$ such that $\Gamma(p^{i*}_{\op{cart}}\alpha)(\omega) = g_{\op{cart}}$. This proves the compatibility with $p^i_{\op{cart}}$.
    \item As we verified the conditions of \cref{maintechnicalsimplicesthm}, we get the existence of dotted arrow $g^i_{\op{cart}}$ such that the diagram commutes.
 \end{itemize} 
 \end{proof}

 \begin{remark}
The proof of \cref{thmA} and \cref{thmB} completes the proof of \cref{compthm}. 
  \end{remark}

\begin{appendices}

\section{Inner anodyne maps between partially ordered sets.}\label{partiallyorderedsetsdefinitionandprop}

\begin{definition}
    Let $P$ be a partially ordered set. A \textit{lattice} is a partially ordered set which admits products (infima) and coproducts (suprema) for a finite number of elements. We shall denote coproducts by $\wedge$ and products by $\vee$. A lattice is said to be \textit{distributive} if for three elements $p,q,r \in P$, we have $p \vee (q \wedge r) = (p \vee q) \wedge (p \vee r)$. A \textit{sublattice} of a lattice $P$ is a subset $Q \subseteq P$ which is stable under finite coproducts and products.
\end{definition}

\begin{example}
    For a lattice $P$ and $p,q \in P$, the undercategory $P_{p/}$, the overcategory $ P_{/p}$ and the partially ordered set $P_{p//q} = \{ x \in P~|~p \le x \le q \}$ are sublattices.
 \end{example}
 \begin{definition}
     Let $P$ be a partially ordered set. A subset $Q \subseteq P$ is said to be a \textit{up-set} if for $p,q \in P$ where $p \in Q$, then $p \le q \implies q \in Q$.
 \end{definition}
 \begin{example}
     The  undercategory $P_{p/}$ is a typical example of an up-set of $P$.
 \end{example}
 \begin{notation}
      Let $P \subset Q, P \subset R$ be two full inclusions of partially ordered sets.  Let $S:= Q \coprod_P R$ be the set theoretic pushout.  The set $S$ is a partially ordered set with the following properties:
      \begin{enumerate}
          \item The subsets $Q$, $R$ are full inclusions of partially ordered sets in $S$. Let $i^Q_S$ and $i^R_S$ be the respective inclusions.
          \item Let $q \in Q$ and $r \in R$, then $q \le r$ iff $ \exists~ p \in P$ such that $ q \le p \le r$.
      \end{enumerate}
 \end{notation}
 One has the inclusion 
 
 \begin{equation}
     i^{Q,R}_S: N(Q)\cup N(R) \hookrightarrow N(S).
 \end{equation}The following lemma provides some conditions in which the following map $i^{Q,R}_S$ is an inner anodyne.
 \begin{proposition}\label{inneranodynepartiallyorderedsetcond}
     Let $P,Q,R$ and $S$ be partially ordered sets defined in the notation above. Suppose we have:
     \begin{enumerate}
         \item $Q$ admits pushouts and pushouts are preserved by the inculsion $i_Q$.
         \item $Q-P$ is finite.
         \item $P$ is an up-set of $Q$.
     \end{enumerate}
     Then the inclusion 
     \begin{equation}
         i^{Q,R}_S : N(Q)\cup N(R) \to N(S)
     \end{equation}
     is an inner anodyne. 
 \end{proposition}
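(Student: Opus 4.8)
The plan is to argue by induction on the cardinality of $Q\setminus P$, which is finite by hypothesis~(2). When $Q=P$ we have $S=R$ and $N(Q)\cup N(R)=N(R)=N(S)$, so the map is the identity and there is nothing to prove. For the inductive step I would first record the shape of the nondegenerate simplices of $N(S)$ missing from $N(Q)\cup N(R)$. Using that $P$ is an up-set of $Q$ (hypothesis~3) together with the order on the pushout $S$ (if $q\in Q$, $r\in R$ and $r\le q$, then some $p\in P$ has $r\le p\le q$, and $p\le q$ with $p\in P$ forces $q\in P$), one checks that $Q\setminus P$ is a down-set and that no vertex of $P$ or of $R\setminus P$ lies below a vertex of $Q\setminus P$. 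Hence every missing chain has the form $a_0<\dots<a_k<c_0<\dots<c_l$ with $a_0,\dots,a_k\in Q\setminus P$ at the bottom and $c_0,\dots,c_l\in R$ on top, the top part containing at least one vertex of $R\setminus P$.

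Next, choose an element $y$ maximal in $Q\setminus P$. Since $Q\setminus P$ is a down-set, every element of $Q$ strictly above $y$ lies in $P$, so $P_1:=P\cup\{y\}$ is again an up-set of $Q$; set $R_1:=R\cup\{y\}\subseteq S$, so that $S=Q\coprod_{P_1}R_1$ and $Q\cap R_1=P_1$. This gives a factorization
\[
N(Q)\cup N(R)\;\hookrightarrow\;N(Q)\cup N(R_1)\;\hookrightarrow\;N(S).
\]
The second map is precisely the assertion of the proposition for the triple $(Q,P_1,R_1)$, for which $\lvert Q\setminus P_1\rvert=\lvert Q\setminus P\rvert-1$ and all three hypotheses persist (the ambient $S$ and the map $i_Q$ are unchanged); it is inner anodyne by the inductive hypothesis. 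So everything reduces to the first map.

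For the first map I would set $L:=\{r\in R: r>y\text{ in }S\}$ (an up-set of $R$) and $L_0:=L\cap P=L\cap Q$. A direct inspection shows that the simplices of $N(Q)\cup N(R_1)$ not already present are exactly the chains $y<C$ with $C$ a chain in $L$ meeting $L\setminus L_0$, and that consequently the first map is a cobase change of the join inclusion
\[
J:\;\big(\{y\}\star N(L_0)\big)\cup_{N(L_0)}N(L)\;\hookrightarrow\;\{y\}\star N(L).
\]
Viewing $J$ as the pushout-join map associated to $\emptyset\hookrightarrow\{y\}=\Delta^0$ and $N(L_0)\hookrightarrow N(L)$, the join--anodyne lemma (\cite[Lemma 2.1.2.3]{HTT}) applies: since $\emptyset\to\Delta^0$ is not right anodyne, I would instead invoke the clause guaranteeing that $J$ is inner anodyne provided $N(L_0)\hookrightarrow N(L)$ is \emph{left} anodyne. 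As inner anodyne maps are stable under cobase change, the whole proposition reduces to showing that $N(L_0)\hookrightarrow N(L)$ is left anodyne.

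This last point is where I expect the real work to lie, and it is exactly here that hypothesis~(1) enters. The plan is to show that $L_0\hookrightarrow L$ is initial, i.e. that for every $x\in L$ the poset $(L_0)_{/x}=\{p\in L_0:p\le x\}$ is weakly contractible (Quillen's Theorem~A, \cite[Theorem 4.1.3.1]{HTT}). It is nonempty because $x>y$ forces some $p\in P$ with $y\le p\le x$, and such $p$ lies in $L_0$; and it is directed because for $p_1,p_2\in(L_0)_{/x}$ the least upper bound $p_1\vee_Q p_2$ exists in $Q$, lies in $P$ (as $P$ is an up-set and $p_1\vee_Q p_2\ge p_1\in P$), lies in $L$, and --- crucially, because pushouts in $Q$ are preserved by $i_Q\colon Q\hookrightarrow S$ --- coincides with the least upper bound computed in $S$, hence is $\le x$. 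A directed poset has weakly contractible nerve, so $L_0\hookrightarrow L$ is initial; concretely, when $(L_0)_{/x}$ has a top element, sending $x$ to that element defines an order-preserving retraction $\rho\colon L\to L_0$ with $\rho(x)\le x$ and $\rho|_{L_0}=\mathrm{id}$, and the resulting downward deformation retraction exhibits $N(L_0)\hookrightarrow N(L)$ as left anodyne. The main obstacle is precisely this step: checking that the pushout-preservation hypothesis upgrades the contractibility of the comma categories to an honest left-anodyne inclusion (in particular that the retraction $\rho$ is well defined and monotone). Once this is in hand, combining the inner anodyne map $J$ with the inductive second map completes the induction, and hence the proof.
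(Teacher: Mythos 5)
Your proof is correct and follows essentially the same route as the paper's: an induction on $|Q-P|$ that reduces to a one-element extension, exhibited as a pushout of a join-type inclusion so that \cite[Lemma 2.1.2.3]{HTT} reduces the claim to left-anodyneness of an inclusion of nerves of posets, which is then verified via the cofinality criterion of \cite[Theorem 4.1.3.1]{HTT} together with hypothesis (1), giving directedness (existence of suprema) in the comma posets --- in particular, the step you flagged as the ``main obstacle'' is exactly what the paper does (Quillen's Theorem A plus the fact that cofinal monomorphisms are right anodyne), so no explicit retraction $\rho$ is needed and there is no gap. The only divergence is bookkeeping: you absorb a maximal element $y$ of $Q-P$ into $P$ and $R$ and handle the one-element case as the first map of your factorization, whereas the paper splits off a minimal element $q$, proves the case $|Q-P|=1$ separately as the base case, and in the induction step composes a pushout of the inductively-known map with that base case.
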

 \begin{proof}
We prove this by induction on $Q-P$. 

\begin{enumerate}
    \item \textbf{$|Q-P|=1:$} 
    \begin{itemize}
    \item Let $q \in Q-P$, as $P$ is an upset, it turns out that $q$ is a minimal element of $Q$ and by the partially ordered on $S$, we have $q$ as a minimal element of $S$. Considering the over categories $P_{q/},Q_{q/},R_{q/},S_{q/}$. As $q$ is a minimal element, it turns out that $Q_{q/} = P_{q/}^{\triangleleft}$ and $S_{q/}= R_{q/}^{\triangleleft}$.  By the definition of again $q$ as a minimal element, we have the following  diagram :
    \begin{equation}
    \begin{tikzcd}
        N(P_{q/})^{\triangleleft}\coprod_{N(P_{q/})} N(R_{q/}) \arrow[r] \arrow[d,"i^{Q,R}_{S,q}"] & N(Q) \cup N(R) \arrow[d,"i^{Q,R}_S"] \\
        N(R_{q/})^{\triangleleft} \arrow[r] & N(S).
        \end{tikzcd}
    \end{equation} 
   \item  We claim that the square is a pushout square.
    In order to prove the following pushout, the pushout of the square is $N(R) \cup N(S_{q/})$. An $m$-simplex of $N(S)$ can either containg $q$ as a vertex or not. If it does not contain $q$, then the simplex is in $N(R)$. If it contains $q$ as a vertex, then the $0$th vertex of $m$-simplex is $q$ (as $q$ is minimal). Such a simplex lies in $N(S_{q/})$. Thus we have $N(R) \cup N(S_{q/}) =N(S)$. This proves that the above square is pushout.\\

  \item  As inner anodyne maps are preserved under pushouts, we show that $i^{Q,R}_{S,q}$ is an inner anodyne. Using \cite[Lemma 2.1.2.3]{HTT}, it is enough to show that the morphism $N(P_{q/}) \to N(R_{q/})$ is left anodyne. 

  \item We use the theory of cofinal maps to show that $N(P_{q/})^{op} \to N(R_{q/})^{op}$ is cofinal map (hence a right anodyne). By \cite[Theorem 4.1.3.1]{HTT}, this is equivalent to show that for all $r \in R_{q/}$, the category $(P_{q//r})^{op} = (P_{q/} \times_{R_{q/}} r)^{op}$ is weakly contractible. Condition $(1)$ of the proposition gives that the category $P_{q//r}$ admits coproducts which implies that its opposite category admits products which implies it is weakly contractible. This completes the proof of $|Q-P|=1$.
 \end{itemize}
    \item \textbf{$|Q-P|=n-1 \implies |Q-P|=n$:} Let $q \in Q-P $ be a minimal element. Let $Q'=Q-\{q\}$ $S'=S-\{q\}$, we have the following diagram : 
    \begin{equation}
    \begin{tikzcd}
      N(Q') \cup N(R) \arrow[d,"i^{Q',R}_{S'}"] \arrow[r]  & N(Q) \cup N(R) \arrow[d,"i_1"] \arrow[dr,"i^{Q,R}_S"] & {} \\
      N(S') \arrow[r] & N(Q) \cup N(S') \arrow[r,"i^{Q,S'}_S",swap] & N(S)
      \end{tikzcd}
    \end{equation}
    \begin{itemize}
        \item  As $S':= Q'\coprod_P R$, using the induction on $|Q'-P|=n-1$ and $Q'$ admits pushouts and stable under the inclusion $Q' \hookrightarrow S'$ and $P$ is still an upset of $Q'$, we see that by induction $i^{Q',R}_{S'}$ is inner anodyne. As the square is pushout, we see that $i_1$ is an inner anodyne.
        \item  Notice that $S= Q \coprod_{Q \cap S'} S'$ and $|Q-{Q \cap S'}|= 1$. Similar to the other point, we see that the other conditions  of proposition hold in order to apply the induction for $n=1$. Thus we see that $i^{Q,S'}_S$ is an inner anodyne.
        \item Composing the two maps, we see that $i^{Q,R}_S$ is an inner anodyne completing the induction step.
    \end{itemize}
    
\end{enumerate}
 \end{proof}

 We shall apply the above proposition to prove the following statement which helps to prove properties about the combinatorial simplicial sets defined in the setting of $\infty$-categorical compactification.
 \begin{proposition}\label{partiallyorderedmainprop}
     Let $P$ be a finite partially ordered set. Let $p_1\le p_2 \le \cdots p_l$ and $q_1 \le q_2 \le q_3 \cdots q_l$ such that $p_{j} \le q_{j-1}$ for all $2 \le j \le l$. Then the inclusion:
     \begin{equation}
         i_l: \cup_{j=i}^l N(P_{p_j//q_j}) \hookrightarrow N(\cup_{j=1}^l P_{p_j//q_j})      \end{equation}
         is an inner anodyne.
 \end{proposition}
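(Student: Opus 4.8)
The plan is to induct on $l$, peeling off the \emph{first} interval $P_{p_1//q_1}$ at each stage and factoring $i_l$ as a cobase change of the inductive map followed by a single application of \cref{inneranodynepartiallyorderedsetcond}. Throughout I use that $P$ is a lattice — as it is in all our applications (e.g.\ $P=\Cpt^n$ or $P=\Crt^n$) — so that any two elements $a,b$ possess a least upper bound, which I write $a\sqcup b$; only joins, never meets, will be needed. For $l=1$ the map $i_1$ is the identity of $N(P_{p_1//q_1})$, which is inner anodyne.

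For the inductive step, set $Q:=P_{p_1//q_1}$ and $R:=\bigcup_{j=2}^{l}P_{p_j//q_j}$, so that $\bigcup_{j=1}^{l}P_{p_j//q_j}=Q\cup R$. The chain conditions $p_1\le p_j$ and $q_1\le q_j$ give $Q\cap P_{p_j//q_j}=P_{p_j//q_1}$ for $j\ge 2$, and since these are nested ($p_2\le p_j$) their union is $P_{p_2//q_1}$; hence $P':=Q\cap R=P_{p_2//q_1}$. I would then verify the two facts feeding \cref{inneranodynepartiallyorderedsetcond}. First, $P'$ is an up-set of $Q$: if $p_2\le x\le q_1$ and $x\le y\le q_1$ with $p_1\le y$, then $p_2\le y\le q_1$, so $y\in P'$. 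Second, the order on $Q\cup R$ inherited from $P$ agrees with the pushout order of $Q\coprod_{P'}R$: given $a\in Q$ and $b\in R$ with $a\le b$, the element $c:=a\sqcup p_2$ lies in $P'$ (it satisfies $p_2\le c$, and $c\le q_1$ because both $a\le q_1$ and $p_2\le q_1$, the latter being exactly the interleaving hypothesis with $j=2$) and factors the relation as $a\le c\le b$ (since $a\le b$ and $p_2\le p_j\le b$); the symmetric relation $b\le a$ is handled identically with $c:=b\sqcup p_2$, using $p_2\le p_j\le b\le a$. Thus $Q\cup R=Q\coprod_{P'}R$.

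With this in hand I factor $i_l$ as
\[
\bigcup_{j=1}^{l}N(P_{p_j//q_j})\;=\;N(Q)\cup\bigcup_{j=2}^{l}N(P_{p_j//q_j})\xrightarrow{\ \phi\ }N(Q)\cup N(R)\xrightarrow{\ \psi\ }N(Q\cup R).
\]
The map $\phi$ is the cobase change of the inductive map $U:=\bigcup_{j=2}^{l}N(P_{p_j//q_j})\hookrightarrow N(R)=:V$ (inner anodyne by the induction hypothesis, applied to the interleaved family $\{P_{p_j//q_j}\}_{j=2}^{l}$) along $U\hookrightarrow N(Q)\cup U$; this is legitimate because, writing $W:=N(Q)$, one has $W\cap V=N(Q\cap R)=N(P_{p_2//q_1})\subseteq N(P_{p_2//q_2})\subseteq U$, so that $(W\cup U)\cap V=U$ and the relevant square is a pushout. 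Since the inner anodynes are closed under cobase change, $\phi$ is inner anodyne. The map $\psi$ is precisely the inclusion $N(Q)\cup N(R)\hookrightarrow N(Q\coprod_{P'}R)$, to which \cref{inneranodynepartiallyorderedsetcond} applies: $Q=P_{p_1//q_1}$ is an interval in the lattice $P$, hence admits joins preserved by the inclusion into $Q\cup R$; the set $Q-P'$ is finite since $P$ is; and $P'$ is an up-set of $Q$ as checked above. Hence $\psi$ is inner anodyne, and $i_l=\psi\circ\phi$ is too.

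The main obstacle is bookkeeping rather than anything conceptual: getting the roles in \cref{inneranodynepartiallyorderedsetcond} right. The decisive point is that one must peel off the first interval (not the last), because only then is the intersection $Q\cap R=P_{p_2//q_1}$ an up-set of the piece $Q=P_{p_1//q_1}$ on which the proposition imposes the pushout condition — the interleaving inequality $p_2\le q_1$ is exactly what makes both the up-set property and the factorization $a\le a\sqcup p_2\le b$ go through. I would also take care to confirm that the nerve commutes with the finite intersections and unions of full subposets in play (a chain lies in $N(A)\cap N(B)$ iff all its vertices lie in $A\cap B$), which is what licenses the identifications $N(Q)\cap N(R)=N(Q\cap R)$ used throughout.
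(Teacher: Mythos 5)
Your proof is correct, and it rests on the same pillar as the paper's: an iterated application of \cref{inneranodynepartiallyorderedsetcond}, with the new piece attached by a cobase change at each stage. The genuine difference is which piece plays the lemma's role of $Q$. You peel off the \emph{first} interval, taking $Q=P_{p_1//q_1}$, $R=\bigcup_{j\ge 2}P_{p_j//q_j}$ (handled by the inductive hypothesis), $P'=P_{p_2//q_1}$; the paper instead telescopes from the front, at stage $k$ taking $Q=\bigcup_{j=1}^{k-1}P_{p_j//q_j}$ (the part already consolidated), $R=P_{p_k//q_k}$ (the single new interval), and $P'=Q\cap R=P_{p_k//q_{k-1}}$. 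Both splits satisfy the lemma's hypotheses: in the paper's version $P_{p_k//q_{k-1}}$ is still an up-set of the union $\bigcup_{j\le k-1}P_{p_j//q_j}$ (any $y$ in that union lying above an element of $P'$ satisfies $p_k\le y\le q_{k-1}$), and that union admits pushouts because the join of elements of two intervals lands in the interval of the larger index. So your closing claim that one \emph{must} peel off the first interval is true only inside your framework, where the single peeled interval is forced to be the lemma's $Q$; the paper's arrangement, with the union on the $Q$ side and the last interval as $R$, works equally well. What your choice buys is that conditions (1) and (3) of \cref{inneranodynepartiallyorderedsetcond} become immediate (a single interval in a lattice is closed under joins, and $P_{p_2//q_1}$ is visibly an up-set of it), at the cost of the explicit pushout-square verification $(N(Q)\cup U)\cap N(R)=U$ for your map $\phi$ — a verification the paper in fact also needs (its telescoping silently carries the untouched intervals $j>k$ along by exactly this kind of cobase change) but never spells out. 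Finally, note that both arguments use binary joins in $P$, which is not among the proposition's stated hypotheses: you flag this explicitly as a lattice assumption, while the paper uses it tacitly (its interpolating element is the join of $x$ and $p_k$, and its last bullet computes joins across intervals), so this is a defect of the statement shared by both proofs rather than a gap in yours; one small slip in your write-up is the remark that the nerve commutes with ``unions'' of full subposets — it does not (that failure is the whole content of the proposition), but your argument only ever uses the intersection identity $N(A)\cap N(B)=N(A\cap B)$, so nothing breaks.
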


 \begin{proof}
     The above inclusion $i_l$ is an inclusion of the following maps :

     \begin{equation}
          \cup_{j=i}^l N(P_{p_j//q_j}) \xrightarrow{i_l^2} N(\cup_{j=1}^2 P_{p_j//q_j})\bigcup  \cup_{j=3}^l N(P_{p_j//q_j}) \xrightarrow{i_i^3} \cdots  N(\cup_{j=1}^l P_{p_j//q_j})       \end{equation}
 \end{proof}
 Thus, it is enough to show the map
 \begin{equation}
     i_k : N(\cup_{j=1}^{k-1} P_{p_j//q_j})\cup N(P_{p_k//q_k}) \rightarrow N(\cup_{j=1}^{k} P_{p_j//q_j})
 \end{equation}
 is an inner anodyne.  Let $Q:= \cup_{j=1}^{k-1} P_{p_j//q_j}$, $R:=P_{p_k//q_k}$ and $S:= \cup_{j=1}^{k} P_{p_j//q_j}$. We want to apply \cref{inneranodynepartiallyorderedsetcond} to prove our claim. 
 \begin{itemize}
     \item Let $P':= Q \cap R = P_{p_k//q_{k-1}}$. $Q-P'$ is finite.
     \item By definition of $P'$, we see that $P'$ is an upset of $Q$.
     \item We show that $S$ is the pushout $Q \coprod_{P'} R$. On the level of sets, it is the pushout. We need to check the partial ordering of the set $S$. Clearly the inclusions $P' \subset S$ and $ R \subset S$ preserves orderings. If $x \in Q, y \in R$, then $x \wedge p_k \in P_{p_k//q_{k-1}}=P'$. This implies $x \le x \wedge p_k \le y$. If $x \ge y$, we see that this is only possible if $x,y \in P'$. In these case $x = x \ge y$. This proves $S$ as the pushout in the category of partially ordered sets.
     \item For $x \in P_{p_{k_1}//q_{k_1}}$ and $y \in P_{p_{k_2}//q_{k_2}}$, we see that $x \wedge y \in P_{p_{k_3}//q_{k_3}}$ where $k_3 =\op{max}(k_1,k_2)$.
  \end{itemize}
 \section{Existence of finite limits in overcategories.}

\begin{lemma}\label{overcategorylimitpreserving}
    Let $\Ca$ be an $\infty$-category and $c \in \Ca$ be a point in $\Ca$. Let $B$ be a weakly contractible simplicial set. Then a morphism $p : B \to \Ca_{/c}$ admits a limit iff $p': B \to \Ca_{/c} \to \Ca$ admits a limit. 
\end{lemma}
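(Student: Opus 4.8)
My plan is to translate both limit-existence statements into the existence of a terminal object in a slice $\infty$-category, to identify these two slices via the associativity of the slice construction, and finally to compare them by a cofinality argument in which the weak contractibility of $B$ is the essential input. Recall first that $p$ admits a limit exactly when the slice $(\Ca_{/c})_{/p}$ has a terminal object, and $p'$ admits a limit exactly when $\Ca_{/p'}$ has a terminal object \cite[Definition 1.2.13.4]{HTT}. Writing $s \colon \Delta^0 \to \Ca$ for the vertex $c$, so that $\Ca_{/c} = \Ca_{/s}$, the map $p$ is adjoint to a diagram $\widetilde p \colon B \star \Delta^0 = B^{\triangleright} \to \Ca$ restricting to $p'$ on $B$ and sending the cone vertex $v$ to $c$. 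Feeding the associativity of the join, $(Y \star B) \star \Delta^0 \cong Y \star (B \star \Delta^0)$, into the representing objects of the two slice constructions yields a canonical isomorphism $(\Ca_{/c})_{/p} \cong \Ca_{/\widetilde p}$, so I am reduced to proving that $\widetilde p$ admits a limit if and only if $p' = \widetilde p|_B$ does.

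To compare $\widetilde p$ and $p'$, I would show that the inclusion $j \colon B \hookrightarrow B^{\triangleright}$ is coinitial, i.e. that $j^{\op{op}}$ is cofinal, and then invoke that coinitial maps preserve and reflect limits. By the cofinality criterion \cite[Theorem 4.1.3.1]{HTT} applied to $j^{\op{op}}$, it suffices to check that $B \times_{B^{\triangleright}} (B^{\triangleright})_{/b'}$ is weakly contractible for every vertex $b'$ of $B^{\triangleright}$. If $b'$ lies in $B$, then no simplex of $B^{\triangleright} = B \star \{v\}$ with final vertex $b'$ can involve $v$, so $(B^{\triangleright})_{/b'} = B_{/b'}$ and the fibre is $B_{/b'}$, which is weakly contractible since it has the terminal vertex $\op{id}_{b'}$. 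If $b' = v$, then every object maps to $v$, and the simplices of $(B^{\triangleright})_{/v}$ whose source lies in $B$ are precisely the simplices of $B$, so the fibre is isomorphic to $B$, which is weakly contractible by hypothesis. Hence $j$ is coinitial.

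Finally, since $j$ is coinitial, restriction along $j$ induces a trivial Kan fibration $\Ca_{/\widetilde p} \to \Ca_{/p'}$, which, being a categorical equivalence, detects terminal objects; thus the two slices have terminal objects simultaneously (this is the limit analogue of \cite[Proposition 4.1.1.8]{HTT}). Combined with the reduction of the first paragraph, this gives that $p$ admits a limit if and only if $p'$ does. The one step that genuinely uses the hypothesis — and hence the main obstacle — is the fibre computation over the cone vertex, which returns $B$ itself; this is exactly the reflection of the fact that the constant diagram with value $c$ has limit $c$ only when $B$ is weakly contractible. That the hypothesis cannot be dropped is visible already for $B = \emptyset$, where $\Ca_{/c}$ always has the terminal object $\op{id}_c$ while $\Ca$ need not have a terminal object.
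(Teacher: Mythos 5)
Your proposal is correct in substance and follows a genuinely different route from the paper. The paper's proof is a hands-on verification: it translates ``limit diagram'' into a family of extension problems of the form $\partial\Delta^n \star B \hookrightarrow \Delta^n \star B$, rewrites them through the join-with-$\Delta^0$ description of $\Ca_{/c}$, and solves the resulting lifting problems with \cite[Lemma 2.1.2.3]{HTT} together with the fact that $B \hookrightarrow B \star A$ is left anodyne for weakly contractible $B$ (quoted there as ``essentially \cite[Lemma 4.2.3.6]{HTT}''), treating the two implications separately. You instead globalize the adjunction once and for all: the join-associativity identification $(\Ca_{/c})_{/p} \cong \Ca_{/\widetilde p}$ reduces the lemma to the single statement that $\widetilde p \colon B^{\triangleright} \to \Ca$ admits a limit iff $\widetilde p|_B$ does, which you deduce from coinitiality of $j \colon B \hookrightarrow B^{\triangleright}$ and the dual of \cite[Proposition 4.1.1.8]{HTT}. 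This handles both directions at once and isolates exactly where weak contractibility enters (the fibre over the cone point is $B$ itself); your identifications of the two fibres are correct, and the $B = \emptyset$ remark is a correct sanity check of the necessity of the hypothesis.

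The step that does not hold up as written is the appeal to \cite[Theorem 4.1.3.1]{HTT}. Joyal's criterion is stated for maps whose target is an $\infty$-category, whereas your target $(B^{\triangleright})^{\mathrm{op}}$ is a completely general simplicial set: $B$ is only assumed weakly contractible, so $B^{\triangleright}$ has no inner-horn fillers in general, and coinitiality of $j$ cannot be quoted from that theorem. A smaller lapse of the same kind: for a general simplicial set $B$, the justification that $B_{/b'}$ is weakly contractible ``since it has the terminal vertex $\op{id}_{b'}$'' is $\infty$-categorical reasoning applied outside its scope; the claim is true, but it is the classical d\'ecalage/extra-degeneracy contraction of a slice rather than a terminal-object argument. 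Both points are repairable, and the cleanest repair bypasses the criterion entirely: left anodyne maps are coinitial (the dual of ``right anodyne implies cofinal,'' \cite[\S 4.1.1]{HTT}), and the assertion that $B \hookrightarrow B \star \Delta^0 = B^{\triangleright}$ is left anodyne when $B$ is weakly contractible is exactly the fact the paper itself invokes via \cite[Lemma 4.2.3.6]{HTT}. Substituting this for your cofinality check makes the argument complete, and in that form it is arguably a cleaner packaging of the same underlying fact than the paper's lifting-problem computation.
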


 \begin{proof}
 We use the fact that for any simplicial set $A$, the inclusion map $B \to B \boldsymbol{*}A$ is left anodyne. This is essentially \cite[Lemma 4.2.3.6]{HTT}.
     \begin{enumerate}
         \item Suppose the map $p : B \to \Ca_{/c}$ admits a limit. Let $\tilde{p} : B^{\triangleleft} \to \Ca_{/c}$ be the corresponding limit diagram.  We shall show that the composition :
         \begin{equation}
             \tilde{p'}: B^{\triangleleft} \to \Ca_{/c} \to \Ca
         \end{equation}
         is a limit diagram of $p'$.  We need to show for all $n \ge 0$, there exists a solution of the lifting problem:
         \begin{equation}
             \begin{tikzcd}
                 \partial\Delta^n \boldsymbol{*} B \arrow[r,"h_n"] \arrow[d, hookrightarrow] & \Ca \\
                 \Delta^n \boldsymbol{*} B \arrow[ur,dotted,"h_n'",swap] & {}
             \end{tikzcd}
             \end{equation}
             where $h_n|_{[n]}= p''$. As $h_n|_{[n]} : B^{\triangleleft} \to \Ca$ lifts to $B^{\triangleleft \triangleright} \to \Ca$ (via the point $c$).
        Using the fact that $B \to B^{\triangleleft}$ is inner anodyne and applying it in \cite[Lemma 2.1.2.3]{HTT}, we have a solution to the lifting problem :
     \begin{equation}
         \begin{tikzcd}
             \partial\Delta^n \boldsymbol{*} B \coprod_{[n]\boldsymbol{*}B} [n] \boldsymbol{*}B^{\triangleright} \arrow[r] \arrow[d,hookrightarrow] & \Ca \\
             \partial\Delta^n \boldsymbol{*} B^{\triangleright} \arrow[ur,dotted] & {}
         \end{tikzcd}
     \end{equation}
     Note that this allows us to see $h_n$ as a morphism $\partial\Delta^n\boldsymbol{*} B \to \Ca_{/c}$. Then by property of $\tilde{p}$ being a limit diagram.  $h_n$ extends to $h_n' : \Delta^n\boldsymbol{*}B \to \Ca_{/c} \to \Ca$. This completes the proof.

     \item Suppose that $p'$ admits a limit diagram $\tilde{p'} : B^{\triangleleft} \to \Ca$. First, we lift $\tilde{p'}$ factorizes via $\Ca_{/c}$. This follows from the solution of the lifting problem using the same arguments in previous point:
     \begin{equation}
         \begin{tikzcd}
             \Delta^0 \boldsymbol{*} B \coprod_{ B} B^{\triangleright} \arrow[d,hookrightarrow] \arrow[r] & \Ca \\
             \Delta^0 \boldsymbol{*} B^{\triangleright} \arrow[ur,dotted] & {}
         \end{tikzcd}
     \end{equation}
     This shows that $\tilde{p'}$ factorizes through $\tilde{p}: B^{\triangleleft}=\Delta^0 \boldsymbol{*}B \to \Ca_{/c}$. We show $\tilde{p}$ is a limit diagram. For $n \ge 1$, we need to show the solution of the lifting problem :
     \begin{equation}
     \begin{tikzcd}
         \partial\Delta^n \boldsymbol{*} B \arrow[r,"g_n"] \arrow[d,hookrightarrow] & \Ca_{/c} \\
         \Delta^n \boldsymbol{*} B \arrow[ur,dotted,"g_n'",swap] & {}
         \end{tikzcd}
     \end{equation}
     where $g_n|_{[n]}= \tilde{p}$.
      Rewriting the solution in terms of category $\Ca$, we need to show the solution of the lifting problem :
      \begin{equation}
      \begin{tikzcd}
          \partial\Delta^n \boldsymbol{*} B^{\triangleright} \arrow[r] \arrow[d,hookrightarrow] & \Ca \\
          \Delta^n \boldsymbol{*} B^{\triangleright} \arrow[ur,dotted] & {}
          \end{tikzcd}
      \end{equation}
      The composition $\partial\Delta^n\boldsymbol{*} B \to \Ca_{/c} \to \Ca$ admits an extension to $\Delta^n \boldsymbol{*} B \to \Ca$ using the fact that $\tilde{p'}$ is a limit diagram. Thus we are reduced to solve the following lifting problem :
      \begin{equation}
      \begin{tikzcd}
          \partial\Delta^n \boldsymbol{*} B^{\triangleright} \coprod_{\partial\Delta^n \boldsymbol{*} B} \Delta^n \boldsymbol{*} B \arrow[r] \arrow[d,hookrightarrow] & \Ca \\
          \Delta^n \boldsymbol{*} B^{\triangleright} \arrow[ur,dotted] & {}
          \end{tikzcd}
      \end{equation}
      This exists again of the fact that $B \to B^{\triangleright}$ is left anodyne and it follows again from \cite[Lemma 2.1.2.3]{HTT}.
      \end{enumerate}

 \end{proof}

 \begin{proposition}\label{finitelimitsinovercategories}
     Let $f : \Ca \to \Ca'$ be a functor between $\infty$-categories. Suppose that $\Ca$ admits pullbacks and the pullbacks are preserved by $f$. Then for any object $c \in \Ca$, $\Ca_{/c}$ admits finite limits and the limits are preserved by the functor  $f' : \Ca_{/c} \to \Ca'_{/f(c)}$.
     \end{proposition}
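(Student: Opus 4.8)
The plan is to reduce the statement to two special cases—terminal objects and pullbacks—and then feed them into \cref{overcategorylimitpreserving}. I would invoke the standard criterion (\cite[\S 4.4.2]{HTT}) that an $\infty$-category admits all finite limits if and only if it has a terminal object and admits pullbacks, and that a functor between $\infty$-categories which admit finite limits is left exact if and only if it preserves terminal objects and pullbacks. The terminal object case is immediate and requires none of the hypotheses: the identity $\op{id}_c : c \to c$ is a terminal object of $\Ca_{/c}$, and $f'$ carries it to $\op{id}_{f(c)}$, which is terminal in $\Ca'_{/f(c)}$. So it remains only to treat pullbacks.

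For the existence of pullbacks in $\Ca_{/c}$, I would observe that a pullback diagram is indexed by the horn $\Lambda^2_2$, which is weakly contractible. Given a cospan $p : \Lambda^2_2 \to \Ca_{/c}$ and writing $\pi_c : \Ca_{/c} \to \Ca$ for the projection, the composite $\pi_c \circ p : \Lambda^2_2 \to \Ca$ admits a limit because $\Ca$ admits pullbacks; hence $p$ admits a limit by \cref{overcategorylimitpreserving}. Combined with the terminal object, this shows $\Ca_{/c}$ admits all finite limits.

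The real content is that $f'$ preserves pullbacks. Here I would use the strengthening that the proof of \cref{overcategorylimitpreserving} actually establishes: for weakly contractible $B$, the projection $\pi_c$ \emph{creates} $B$-indexed limits, i.e. a cone $\bar p : B^{\triangleleft} \to \Ca_{/c}$ is a limit cone if and only if $\pi_c \circ \bar p$ is a limit cone in $\Ca$ (part (1) of that proof gives the ``only if'' direction, that every limit cone projects to a limit cone, and part (2) gives the ``if'' direction). I would then work with the commuting square of projections $\pi_{f(c)} \circ f' = f \circ \pi_c$. Let $\bar p : (\Lambda^2_2)^{\triangleleft} \to \Ca_{/c}$ be a limit cone. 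By the creation property, $\pi_c \circ \bar p$ is a pullback square in $\Ca$; since $f$ preserves pullbacks, $f \circ \pi_c \circ \bar p = \pi_{f(c)} \circ f' \circ \bar p$ is a pullback square in $\Ca'$; and applying the ``if'' direction of the creation property to $\pi_{f(c)}$ forces $f' \circ \bar p$ to be a limit cone in $\Ca'_{/f(c)}$. Note that one does not need $\Ca'$ to admit all pullbacks for this: the specific diagram $\pi_{f(c)} \circ f' \circ \bar p|_{\Lambda^2_2}$ does admit a limit, since it equals $f$ applied to the pullback square $\pi_c \circ \bar p$, which is exactly what is needed to run part (2) of \cref{overcategorylimitpreserving}.

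The step I expect to require the most care is the ``if'' direction of the creation property for $\pi_{f(c)}$: from the knowledge that a given cone projects to a limit cone, one must deduce that the cone itself is a limit. This is precisely where weak contractibility of $\Lambda^2_2$ is essential—it fails for the empty diagram, which is why terminal objects had to be handled separately—and it is supplied by the factorization argument in part (2) of the proof of \cref{overcategorylimitpreserving}, combined with the uniqueness (up to contractible choice) of limit cones and the fact that the slice projection is a right fibration, hence conservative. Making this last comparison of cones fully rigorous is the main obstacle; everything else is formal.
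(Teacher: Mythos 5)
Your proposal is correct and follows essentially the same route as the paper: existence of pullbacks in $\Ca_{/c}$ via \cref{overcategorylimitpreserving} applied to the weakly contractible $\Lambda^2_2$, reduction of finite limits to pullbacks plus the terminal object $\op{id}_c$ via the dual of \cite[Corollary 4.4.2.4/4.4.2.5]{HTT}, and preservation by $f'$ via the identity $\pi_{f(c)} \circ f' = f \circ \pi_c$ together with applying \cref{overcategorylimitpreserving} to the projection $\Ca'_{/f(c)} \to \Ca'$. The only difference is one of explicitness: you spell out the cone-level ``creation'' property that the proof (rather than the bare statement) of \cref{overcategorylimitpreserving} establishes, which the paper's terse phrase ``applying the lemma to $\Ca'_{/f(c)} \to \Ca'$'' implicitly relies on.
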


\begin{proof}
    We recall that taking pullbacks is taking limits for the diagram $\Lambda^2_2$ which is a weakly contractible simplicial set. Thus applying \cref{overcategorylimitpreserving}, we see that $\Ca_{/c}$ admits pullbacks and they are preserved by $\Ca_{/c} \to \Ca$. 

    We claim that the functor $f'$ preserves pullbacks. At first we notice the composition $\Ca_{/c} \to \Ca'_{/f(c)} \to \Ca$ which is same as $\Ca_{/c} \to \Ca \to \Ca'$ preserves pullbacks as it is composition of two such maps. Applying \cref{overcategorylimitpreserving} to the map $\Ca'_{/f(c)} \to \Ca'$ we see that the morphism $f'$ preserves pullbacks. \\

    Secondly $\Ca_{/c}$ admits pullbacks and has an final object, then by dual of \cite[Corollary 4.4.2.4]{HTT}, we see that $\Ca_{/c}$ admits finite limits. Thus we have $\Ca_{/c}$ admits finite limits and the map $f'$ sends the final objects $\op{id}_c$ to $\op{id}_{f(c)}$. Then by dual version of \cite[Corollary 4/4/2/5]{HTT}, we see that $f'$ preserves finite limits. 
\end{proof}
 \section{On $k$-truncated morphisms.}

 \begin{definition}\cite[Lemma 5.5.6.15]{HTT}
 Let $\Ca$ be an $\infty$-category admitting finite limits.
     A morphism $f : x \to y$ in an $\infty$-category is said to be $-2$-truncated if it is an equivalence. For $n \ge -1$ a morphism is $n$-truncated if the diagonal map $x \to x \times_y x$ is $n-1$-truncated.
 \end{definition}

 \begin{remark}
     The set of $n$-truncated morphisms are stable under pullbacks and compositions. Also given any $2$-simplex of $\Ca$ with edges opposite to vertex $1$ and $0$ are n-truncated, then the remaining edge is also $n$-truncated. Hence the class of $n$-trucated morphisms are admissible.\\
     The above definition also works if we assume that $\Ca$ admits pullbacks. Hence this works in our setup.
 \end{remark}

 \begin{lemma} \label{truncationdecreased}
    Let $\Ca$ be an $\infty$-category admitting pullbacks. Consider the following  square 
    \begin{equation}
        \begin{tikzcd}
            x \arrow[r,"f"] \arrow[d,"g"] & y \arrow[d,"h"] \\
            z \arrow[r,"p'"] & w
        \end{tikzcd}
    \end{equation}
    which admits a following decomposition
\begin{equation}
    \begin{tikzcd}
        x \arrow[r,"f"] \arrow[d,"g"] & y \arrow[d,"h'"] \arrow[ddr,bend left =60,"h"] & {} \\
        z \arrow[r,"p'"] \arrow[drr,"p", bend right =60] & w' \arrow[dr,"q"] & {} \\
        {} & {} & w
    \end{tikzcd}
\end{equation}
where the inside square is a pullback  square. If $q$ is $n$-truncated for $n \ge -1$, then the canonical map
\begin{equation}
   x:= z \times_{w'} y \to z \times_w y 
\end{equation}
is $n-1$-truncated.
 \end{lemma}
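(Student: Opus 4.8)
The plan is to realize the comparison map $z \times_{w'} y \to z \times_w y$ as a base change of the relative diagonal of $q$, and then to invoke the definition of $n$-truncatedness together with the stability of truncated morphisms under pullback. Since $\Ca$ admits pullbacks, all of the objects below exist. First, because the inner square of the given decomposition is a pullback, we identify $x \cong z \times_{w'} y$, the pullback of the cospan $z \xrightarrow{p'} w' \xleftarrow{h'} y$. On the other hand, $z \times_w y$ denotes the pullback of $z \xrightarrow{p} w \xleftarrow{h} y$, where $p = q \circ p'$ and $h = q \circ h'$; the canonical map $z \times_{w'} y \to z \times_w y$ is then induced by the universal property of the latter pullback, using the identifications $q \circ p' \simeq p$ and $q \circ h' \simeq h$.

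Second, and this is the heart of the argument, I claim that the square
\begin{equation}
\begin{tikzcd}
z \times_{w'} y \arrow[r] \arrow[d] & z \times_w y \arrow[d,"{(p',h')}"]\\
w' \arrow[r,"\Delta_q"] & w' \times_w w'
\end{tikzcd}
\end{equation}
is cartesian, where $\Delta_q$ is the relative diagonal of $q$, the right-hand vertical map is induced by $p' : z \to w'$ and $h' : y \to w'$ (which both lie over $w$ since $q p' \simeq p$ and $q h' \simeq h$), and the left-hand vertical map sends a point of $z \times_{w'} y$ to the common image in $w'$. To verify this I would compute the pullback $w' \times_{w' \times_w w'} (z \times_w y)$ by mapping an arbitrary test object $T$ into it: such a point amounts to a point $\zeta$ of $z$, a point $\upsilon$ of $y$, a point $u$ of $w'$, and identifications $p'(\zeta) \simeq u \simeq h'(\upsilon)$ in $w'$ compatible over $w$. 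The space of choices of $u$ together with its two identifications is contractible, and contracting it leaves exactly the data $(\zeta,\upsilon, p'(\zeta)\simeq h'(\upsilon))$ of a point of $z \times_{w'} y$. Hence the square is cartesian. This step is the main obstacle, as it requires carefully matching the two limit presentations; equivalently one may present it as a pasting of pullback squares, but either way it is the standard fact that, for a cospan factoring through $q$, the comparison of relative pullbacks is a base change of $\Delta_q$.

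Finally, by the definition of $n$-truncated morphism recalled in Appendix C, the hypothesis that $q$ is $n$-truncated for $n \ge -1$ means precisely that its relative diagonal $\Delta_q : w' \to w' \times_w w'$ is $(n-1)$-truncated. Since the comparison map $z \times_{w'} y \to z \times_w y$ is, by the cartesian square above, the base change of $\Delta_q$ along $(p',h')$, and since $(n-1)$-truncated morphisms are stable under pullback, we conclude that $z \times_{w'} y \to z \times_w y$ is $(n-1)$-truncated, as required.
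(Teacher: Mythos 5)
Your proof is correct and is essentially the paper's argument: the paper likewise realizes the comparison map $x = z \times_{w'} y \to z \times_w y$ as a base change of the relative diagonal $\Delta_q \colon w' \to w' \times_w w'$ and concludes by stability of $(n-1)$-truncated morphisms under pullback. The only difference is in how the key cartesian square is verified -- the paper pastes two explicit pullback squares (the alternative you mention in passing), while you check the square directly with a test-object/mapping-space argument -- which is a cosmetic, not substantive, distinction.
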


 \begin{proof}
We have the following commutative diagram :
\begin{equation}\label{cartdiagrampullbacksquare}
    \begin{tikzcd}
         x= z\times_{w'} y \arrow[r] \arrow[d] & y \arrow[r] \arrow[d] & w' \arrow[d]\\
         z \times_w y \arrow[r] & y \times_w w'\arrow[r] & w' \times_{w} w'
    \end{tikzcd}
\end{equation}
The first square is pullback square becuase : 
\begin{equation}
    y \times_w z \times_{y \times_w w'}y =z \times_{w'} (w' \times_w y) \times_{w' \times_w y} y = z \times_{w'} y.
\end{equation}
The second square is a pullback square because :
\begin{equation}
    y \times_w w' \times_{w' \times_w w'} w' = y \times_{w'} (w' \times_w w') \times_{w' \times_w w'} w' =  y \times_{w'} w' = y.
 \end{equation}
 As $q$ is $n$-truncted, $w' \times w' \times_w w'$ is $n-1$-truncated. As truncated morphisms are stable under pullback squares, we see that $x \to z \times_w y$ is $n-1$-truncated.
 \end{proof}

 \end{appendices}

\end{document}